\definecolor{blue(munsell)}{rgb}{0.0, 0.5, 0.69}
\DeclareFontFamily{U}{min}{}
\DeclareFontShape{U}{min}{m}{n}{<-> udmj30}{}
\newcommand{\yo}{\!\text{\usefont{U}{min}{m}{n}\symbol{'210}}\!}
\theoremstyle{definition}
\newtheorem{thm}{Theorem}[subsection]
\newtheorem*{thm*}{Theorem}
\newtheorem{prop}[thm]{Proposition}
\newtheorem*{prop*}{Proposition}
\newtheorem{cor}[thm]{Corollary}
\newtheorem{defn}[thm]{Definition}
\newtheorem*{war*}{Warning}
\newtheorem{rem}[thm]{Remark}
\newtheorem{constr}[thm]{Construction}
\newtheorem{exa}[thm]{Example}
\newtheorem{conj}[thm]{Conjecture}
\newtheorem{notat}[thm]{Notation}
\newcommand{\cat}{\mathsf{cat}}
\newcommand{\Cat}{\mathsf{Cat}}
\newcommand{\cprod}{\mathsf{prod}}
\newcommand{\lex}{\mathsf{lex}}
\newcommand{\Lex}{\mathsf{Lex}}
\newcommand{\coh}{\mathsf{coh}}
\newcommand{\reg}{\mathsf{reg}}
\newcommand{\disj}{\mathsf{disj}}
\newcommand{\sites}{\mathsf{sites}}
\newcommand{\skt}{\mathsf{skt}}
\newcommand{\Skt}{\mathsf{Skt}}
\newcommand{\logoi}{\mathsf{log}}
\newcommand{\Log}{\mathsf{Log}}
\newcommand{\limskt}{\mathsf{skt}_l}
\newcommand{\limSkt}{\mathsf{Skt}_l}
\newcommand{\colimskt}{\mathsf{skt}_c}
\newcommand{\colimSkt}{\mathsf{Skt}_c}
\newcommand{\leftskt}{\mathsf{Lskt}}
\newcommand{\leftSkt}{\mathsf{LSkt}}
\newcommand{\leftnskt}{\mathsf{_{ln}skt}}
\newcommand{\leftnSkt}{\mathsf{_{ln}Skt}}
\newcommand{\roundSkt}{r\Skt}
\newcommand{\PhiEx}{\Phi\text{-}\mathsf{ex}}
\newcommand{\moritaM}{\mathsf{M}}
\newcommand{\linj}{\mathbf{LInj}} 
\newcommand{\Top}{\mathsf{Top}}
\newcommand{\Set}{\mathsf{Set}}
\newcommand{\op}{\textit{op}}
\newcommand{\Psh}{\mathsf{Psh}}
\newcommand{\sketch}{\mathcal}
\newcommand{\sS}{\sketch{S}}
\newcommand{\sT}{\sketch{T}}
\newcommand{\sM}{\sketch{M}}
\newcommand{\sC}{\sketch{C}}
\newcommand{\sD}{\sketch{D}}
\newcommand{\sX}{\sketch{X}}
\newcommand{\sY}{\sketch{Y}}
\newcommand{\sL}{\sketch{L}}
\newcommand{\sI}{\sketch{I}}
\newcommand{\categ}{\mathbb}
\newcommand{\cS}{\categ{S}}
\newcommand{\cT}{\categ{T}}
\newcommand{\cC}{\categ{C}}
\newcommand{\cD}{\categ{D}}
\newcommand{\cX}{\categ{X}}
\newcommand{\cY}{\categ{Y}}
\newcommand{\cM}{\categ{M}}
\newcommand{\cI}{\categ{I}}
\newcommand{\cL}{\categ{L}}
\newcommand{\skL}{\mathsf{L}}
\newcommand{\skC}{\mathsf{C}}
\def\black{\color{black}}
\title{Sketches and classifying Logoi}
\author{Ivan Di Liberti}
\author{Gabriele Lobbia}
\address{
Ivan \textsc{Di Liberti} \newline
Department of Philosophy, Linguistics and Theory of Science\newline
University of Gothenburg\newline
Gothenburg, Sweden\newline
\href{mailto:diliberti.math@gmail.com}{\sf diliberti.math@gmail.com}
}
\address{
Gabriele \textsc{Lobbia}: \newline
Department of Mathematics and Statistics\newline
Masaryk University, Faculty of Sciences\newline
Kotl\'{a}\v{r}sk\'{a} 2, 611 37 Brno, Czech Republic\newline
\href{mailto:gabriele.lobbia@unibo.it}{\sf gabriele.lobbia@unibo.it}
}
\thanks{The first named author received funding from Knut and Alice Wallenberg Foundation, grant no.~2020.0199.}
\begin{document}

\maketitle

\begin{abstract}
Inspired by the theory of classifying topoi for geometric theories, we define rounded sketches and logoi and provide the notion of classifying logos for a rounded sketch. Rounded sketches can be used to axiomatise all the known fragments of infinitary first order logic in $\mathbf{L}_{\infty,\infty}$, in a spectrum ranging from weaker than finitary algebraic to stronger than $\lambda$-geometric for $\lambda$ a regular cardinal. We show that every rounded sketch has an associated classifying logos, having similar properties to the classifying topos of a geometric theory. This amounts to a Diaconescu-type result for rounded sketches and (Morita small) logoi, which generalises the one for classifying topoi.
 
 \smallskip \noindent \textbf{Keywords.} categorical logic, classifying topos, logos, classifying logos, sketch, left sketch, rounded sketch. 
 
 \smallskip \noindent \textbf{MSC2020.} 18A15, 03C75, 18C10,  18B25, 18C30, 18C10.
\end{abstract}

   {
   \hypersetup{linkcolor=black}
   \tableofcontents
   }


\section*{Introduction}

\subsection*{Motivation}


  In one sentence, the aim of this paper is to generalise the notion of classifying topos existing for geometric logic to infinitary logic by introducing the notions of \emph{rounded sketch} and \emph{logos}.  More precisely, we want to replicate the pattern that sites are presentations of \emph{geometric} theories and that the classifying topos gives a \emph{syntax independent} avatar of the theory. In a similar way our notion of \emph{rounded sketch} gives the presentation of any infinitary theory (including geometric ones) and the classifying logos its syntax independent presentation. These ideas are summarised in the table below and explained further throughout the paper, for instance in \Cref{sec:notions-skt}. 
 

\begin{table}[h!]\renewcommand{\arraystretch}{1.25}
\label{table:logic-intepr} 
\begin{tabular}{|c|c|c|}
\hline
\textbf{Logic Fragment} & \textbf{Presentation}   & \textbf{Morita Classifying Object} \\ \hline
Geometric      & Site           & Topos                     \\ \hline
Infinitary     & Rounded Sketch & Logos                     \\ \hline
\end{tabular}
\end{table}

The idea of using sketches to analyse infinitary logics is as old as the notion of sketch \cite{wells1994sketches,Makkaipare,adamek_rosicky_1994}, and the literature on the topic is extremely vast. The upshot of this approach  (and more specifically of \cite[D2]{Sketches} or \cite[2.F and 5]{adamek_rosicky_1994}) is that the objects in the $2$-category of sketches, $\mathsf{Skt}$ can be understood as theories in infinitary logics, where limit and colimit specifications are used to endow models with prescribed structures. The aim of this paper is to lay the foundations for applying 2-dimensional techniques in the study of sketches having in mind applications in infinitary logics\footnote{This may sound similar, but is only mildly related to the work in \cite{makkai1997generalized,makkai1997generalizedII,makkai1997generalizedIII}, which is much more in the spirit of \cite{di2022bi} and \cite{coraglia2021context}.}.

Of course, to some extent, this general programme was already developed in (and \textit{by}) topos theory under the restriction that the fragment of infinitary logic to study was \emph{geometric logic}. Let us list some prominent examples of this scientific flow, in order to frame precisely the kind of results we would like to be able to simulate and study in a more general environment.

\begin{itemize}
    \item In \cite{pitts1983amalgamation,pitts1983application}, pullback-stability of open surjections is used to derive the classical statement of Craig interpolation theorem.
    \item In \cite{moerdijk2000proper,zawadowski1995descent} and many other papers following this research agenda, descent and lax descent are used to derive completeness theorems for first-order logic, and results that revolve around such property.
    \item In \cite{barr1974toposes}, the construction of the Barr's cover was used to prove that if a statement in geometric logic is deducible from a geometric theory using classical logic and the axiom of choice, then it is also deducible from it in constructive mathematics.
\end{itemize}

In all these contexts, and most evidently in the case of \cite{pitts1983amalgamation}, many technical bits of these theorems are not performed at the level of sites (presentation of theories), and must be carried out on their classifying topos in order to be meaningfully stated, or technically accounted. This is mostly due to the obstacles that Morita theory poses to the categorical logician. The classifying topos of a theory offers a syntactically unbiased representation of the theory that contains its logical information without committing to any symbolic representation of it. This is the reason why this paper will focus on providing an appropriate notion of \textit{classifying logos} for a sketch.

In \cite{anel2021topo}, Anel and Joyal crystallise the theory of classifying topoi for geometric logic in a choice of name: they call $\mathsf{Logoi}$ the $2$-category of topoi, lex cocontinuous functors and natural transformations. If there is a philosophical point to this paper, besides its technical content, it is that this specific choice has a reasonable historical bias to it, and a notion of \textit{logos} -- whatever that is -- should exist also for theories that are not geometric in nature.  One of the motivations to pursue this approach was found already in topos theory. In fact, in his work \cite{espindola2020infinitary}, Esp\'indola has shown that there is the need for a general theory of $\lambda$-topoi, which remains somewhat geometric in taste and yet handles completeness-like results in $\lambda$-ary geometric logic. Our notion of logos achieves three results in this sense:
\begin{itemize}
    \item  it offers a framework to study $\lambda$-topoi for every $\lambda$;
    \item it offers a framework in which topoi and infinitary topoi \textit{coexist}, so that they can interact together\footnote{Despite being very important, we shall not comment at the moment on the non-commutativity of the diagram above. The reader will soon get a sense of its importance in \Cref{doctrines}, where a very similar situation arises, and later we shall discuss in detail the relevance of this diagram in the last section of the paper.};
\[\begin{tikzcd}[ampersand replacement=\&]
	{\mathsf{Topoi}^{\text{op}}} \& {\mathsf{Topoi}_{\aleph_1}^{\text{op}}} \& {...} \& {\mathsf{Topoi}_{\lambda}^{\text{op}}} \& {...} \\
	\&\& {\mathsf{Logoi}}
	\arrow[from=1-4, to=1-3]
	\arrow[from=1-5, to=1-4]
	\arrow[from=1-2, to=1-1]
	\arrow[from=1-3, to=1-2]
	\arrow[""{name=0, anchor=center, inner sep=0}, curve={height=6pt}, from=1-1, to=2-3]
	\arrow[""{name=1, anchor=center, inner sep=0}, from=1-2, to=2-3]
	\arrow[""{name=2, anchor=center, inner sep=0}, from=1-3, to=2-3]
	\arrow[""{name=3, anchor=center, inner sep=0}, from=1-4, to=2-3]
	\arrow[""{name=4, anchor=center, inner sep=0}, curve={height=-6pt}, from=1-5, to=2-3]
	\arrow[shorten <=9pt, shorten >=9pt, Rightarrow, from=0, to=1]
	\arrow[shorten <=7pt, shorten >=7pt, Rightarrow, from=1, to=2]
	\arrow[shorten <=7pt, shorten >=7pt, Rightarrow, from=2, to=3]
	\arrow[shorten <=9pt, shorten >=9pt, Rightarrow, from=3, to=4]
\end{tikzcd}\]
    \item it is even more general than this specific theory, offering an environment that can encompass even non geometric-like fragments of infinitary logic. 
\end{itemize}

The main result of this paper will be a Diaconescu-type theorem for (rounded) sketches and logoi, offering the most classical starting point for the theory of classifying logoi. 
That means, for any rounded sketch $\sS$, we can construct a \textit{universal logos} $\mathsf{C}l[\sS]$ associated with it, so that $\mathsf{C}l[\sS]$ classifies the models of $\sS$ in any logos $\sT$, 
\begin{center}
$\roundSkt(\sS,\sT) \simeq \Log^\moritaM(\mathsf{C}l[\sS],\sT)$. 
    \end{center}
In more categorical terms, we can rephrase it as follows. 

\begin{thm*}[Diaconescu for Logoi, \Cref{diaconescu}] 

  The $2$-category  $\mathsf{Log}^\mathsf{M}$ of Morita small logoi is (bi)reflective in the $2$-category $r\mathsf{Skt}^\mathsf{M}$, of Morita small rounded sketches.
\[\begin{tikzcd}[ampersand replacement=\&]
	{\roundSkt^\moritaM} \& {\Log^\moritaM}
	\arrow[""{name=0, anchor=center, inner sep=0}, "U"', curve={height=12pt}, from=1-2, to=1-1]
	\arrow[""{name=1, anchor=center, inner sep=0}, "{\mathcal{C}l[-]}"', curve={height=12pt}, from=1-1, to=1-2]
	\arrow["\top"{description}, draw=none, from=1, to=0]
\end{tikzcd}\]
\end{thm*}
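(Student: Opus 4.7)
The plan is to follow the classical pattern of the Diaconescu theorem: construct the putative left adjoint $\mathcal{C}l[-]$ explicitly and verify the adjunction via a Yoneda-and-Kan-extension argument. Given a Morita small rounded sketch $\sS$ with underlying category $\sC$ together with its specified limit and colimit cones, I would define $\mathcal{C}l[\sS]$ as a reflective localisation of the presheaf category $\Psh(\sC) = [\sC^{\op}, \Set]$ (or of a suitable free cocompletion of $\sC$ that accommodates the size of the specifications) at the class of maps forced to become isomorphisms by the sketch structure. By construction, this produces a logos-candidate equipped with a canonical sketch morphism $\eta_\sS: \sS \to U(\mathcal{C}l[\sS])$ sending the specified cones of $\sS$ to genuine limits and colimits in $\mathcal{C}l[\sS]$.

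The universal property would then be established as follows. Given a Morita small logos $\sT$ and a sketch morphism $F: \sS \to U(\sT)$, I would left Kan extend the underlying functor $\sC \to \sT$ along the Yoneda embedding to obtain a cocontinuous functor $\overline{F}: \Psh(\sC) \to \sT$. Because $F$ respects the sketch structure, $\overline{F}$ inverts the localising class, so it factors (essentially uniquely) through a cocontinuous functor $\tilde{F}: \mathcal{C}l[\sS] \to \sT$. Uniqueness up to isomorphism is standard once one notes that $\eta_\sS$ is dense in $\mathcal{C}l[\sS]$. Together with the (bi)naturality of this correspondence in $\sT$ and $\sS$, this yields the hom equivalence $\roundSkt(\sS, U\sT) \simeq \Log(\mathcal{C}l[\sS], \sT)$, whence the bireflection.

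The hard part will be showing that $\tilde{F}$ is actually a morphism of logoi, i.e.\ that in addition to being cocontinuous it preserves the class of limits demanded by the logos structure. Preservation of colimits is cheap from the Kan extension. Preservation of the relevant limits, however, is the genuine Diaconescu-type difficulty: in the classical sites/topoi case it corresponds exactly to the flatness condition on functors out of a site. Here I expect the rounded axioms on $\sS$ to do precisely this work, ensuring that the limit and colimit specifications interact coherently enough to force the left Kan extension $\tilde{F}$ to automatically be lex in the appropriate sense. A secondary obstacle is verifying that $\mathcal{C}l[\sS]$ is itself Morita small, so that $\mathcal{C}l[-]$ really lands in $\Log^\moritaM$; this should reduce to a cardinality estimate built from the smallness of $\sS$ and the fact that $\mathcal{C}l[\sS]$ is presented by it. Assembling these pieces gives the desired (bi)reflection.
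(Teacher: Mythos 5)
Your construction of $\mathcal{C}l[\sS]$ (an orthogonality class / reflective localisation of the presheaf category at the maps induced by the colimit specifications, with the universal property obtained by Kan extension along Yoneda and factorisation through the localisation) is exactly the paper's \Cref{costr:left-skt-class} and \Cref{prop:rel-adj-left-class}; the paper then obtains \Cref{diaconescu} as a two-line restriction of the left-sketch biadjunction \Cref{diacweak}, using that both vertical inclusions are full. So the architecture is right. However, there are two genuine problems with where you locate the content. First, you assert that \emph{by construction} $\eta_\sS$ sends the specified cones of $\sS$ to genuine limits in $\mathcal{C}l[\sS]$. That is false for a general sketch: the localisation only forces the specified \emph{cocones} to become colimits, and the statement that $J_\sS$ sends cones in $\skL_\sS$ to limit diagrams is precisely the \emph{definition} of $\sS$ being rounded (\Cref{def:rounded-skt}). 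It is a hypothesis you must invoke, not a consequence of the construction, and it is what makes $\hat{\sS}$ a normal sketch (\Cref{prop:left-class-of-rounded-is-normal}).

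Second, the step you single out as ``the genuine Diaconescu-type difficulty'' --- that $\tilde{F}$ preserves the limits demanded by the logos structure, with roundedness playing the role of flatness --- is not where the difficulty lies in this framework, and roundedness does no work there. The classifier carries only the \emph{minimal} limit specification $\skL_{\hat{\sS}}=\{J_\sS\sigma\mid\sigma\in\skL_\sS\}$, so $\tilde{F}(J_\sS\sigma)\cong F\sigma$ lands in $\skL_\sT$ tautologically because $F$ is a sketch morphism; no lexness of the Kan extension in any absolute sense is needed or claimed. What you instead leave unverified is the step that actually requires an argument: that $\mathcal{C}l[\sS]$ is itself a \emph{rounded} left sketch, i.e.\ a logos, so that $\mathcal{C}l[-]$ lands in $\Log^\moritaM$ at all. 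This is the content of \Cref{prop:rounded-then-left-class-rounded} (via the identification $\hat{\hat{\sS}}\simeq\hat{\sS}$ of \Cref{rmk:left-class-stable}), and without it the bireflection does not typecheck. Finally, for Morita small (rather than small) $\sS$ the cardinality estimate you gesture at must be replaced by the small-presheaf construction and the reduction $\mathcal{H}_j^\perp=\mathcal{H}^\perp$ of \Cref{correctionlarge}; the plain presheaf category is not available there.
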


Of course, this is far from being our only result, and we shall break down the content of the paper and our contribution in the next subsection.

\subsection*{Our contribution and structure of the paper}

The paper starts with \Cref{sec:notions-skt} which introduces the general theory of sketches, fixing the notation and recalling some examples and results of interest. 

Then, in Sections \ref{sec:2-dim-asp-skt} and \ref{sec:richard}, we  contribute to the general theory of sketches, especially the 2-dimensional part. In particular we construct the maximal/minimal sketch structure making some collection of functors sketch morphisms (\Cref{prop:min-max}) and use it to get some interesting results such as: 
\begin{itemize}
    \item Theorems \ref{thm:skt-has-p-lim} and \ref{thm:skt-has-p-colim}, where we describe explicitly pseudo-co/limits in the 2-categories $\skt/\Skt$;
    \item \Cref{thm:skt-closed-mon} which proves that Benson's tensor product (see \cite{benson1997multilinearity}) of sketches is closed.
\end{itemize}

The final three sections (\ref{sec:left-sketches}, \ref{sec:rounded-skt} and \ref{sec:class-logoi}) contain our main results, which are the Diaconescu-type theorems: \ref{diacweak} (for \emph{left} sketches) and \ref{diaconescu} (for \emph{rounded} sketches and \emph{logoi}). 

In particular, in \Cref{sec:left-sketches} we introduce the notion of left sketch (\Cref{def:left-sketch}) and construct a left sketch classifier for any (Morita small) sketch (see \Cref{costr:left-skt-class} for small sketches and \Cref{correctionlarge} for Morita small ones). 
Using left sketches as example, we also underline how useful  a more \emph{parametric} approach to Morita theory can be (see \Cref{sec:par-morita-theory}). 

Then, in \Cref{sec:rounded-skt} we define rounded sketches (\Cref{def:rounded-skt}), proving some useful results about them. We draw a close connection between the theory of rounded sketches and that of lex colimits \cite{garner2012lex} by Garner and Lack. We discuss this connection mostly in \Cref{prop:phi-ex-round}.

We end with \Cref{sec:class-logoi} defining our notion of logos (\Cref{def:logos}), proving the main result \Cref{diaconescu} and showing how this result can recover known ones in the literature (for topoi and $\Phi$-exact categories, see \Cref{rmk:one-diaconescu-all}).


\section{Notions of Sketch}\label{sec:notions-skt}

The aim of this section is to set the scene for the sections to come, clarifying the notions of sketch we will deal with and their associated $2$-categories. Some of the content of this section can be found in the literature, but some notation needs to be set in order to properly handle size and coherence issues. We also take the opportunity to tailor the presentation towards our interests, putting into context the examples that will justify the notion of (classifying) logos later in the paper.

The literature on the general topic of sketches is extremely vast and not easy to organise coherently and compactly. In this paper, we must make some design choices concerning the definition of sketch. Mostly, we will follow \cite[2.F]{adamek_rosicky_1994}, and we shall briefly comment on the other approaches when needed. We recommend the reader to check out \cite{Makkaipare} for a more \textit{graph}-oriented approach to sketches. Finally, we refer to \cite{wells1994sketches} for a(n almost) comprehensive list of references.

\subsection{Small and large sketches}
\begin{notat}[\textit{$2$-categories of categories: $\cat, \Cat$}]
Throughout the paper, $\cat$ is the $2$-category of small categories and functors between them and $\Cat$ is the $2$-category of locally small categories. 
\end{notat}

\begin{defn}[\textit{Notions of sketch}] \label{skt}
\begin{enumerate}[(i)]
    \item[]
    \item A \textit{sketch} $\sS= (\cS, \skL_\sS, \skC_\sS)$ is a triple where:
\begin{itemize}
    \item $\cS$ is a locally small category,
    \item $\skL_\sS$ is a class of (essentially small) diagrams $d_i\colon D_i \to \cS$ in $\cS$, each of them equipped with the choice of a cone $\pi_i\colon  \Delta(s_i) \Rightarrow d_i$. ($\Delta$ is the usual diagonal functor $\Delta\colon  \cS \to \cS^{D_i}$).
    \item $\skC_\sS$ is a class of (essentially small) diagrams $d_k\colon D_k \to \cS$ in $\cS$, each of them equipped with the choice of a cocone $j_k\colon d_k \Rightarrow \Delta(s_k)$.
\end{itemize}

    \item A sketch is \textit{normal} if the cones and the cocones are of co/limit form. 
    
    \item A sketch is \textit{limit} if $\skC_\sS$ is empty, \textit{colimit} if $\skL_\sS$ is empty. 
    
    \item A sketch is \textit{small} if $\cS$ is essentially small. 
    
    
    \item We may informally say \textit{large} sketch to stress on the fact that the sketch may not be small. Similarly, following the french tradition, we may informally say \textit{mixed} sketch to stress on the fact that the sketch may not be limit nor colimit.
    
    \item For a property $\square$ that quantifies on both the classes of cones and cocones, we may say \textit{right} $\square$ or \textit{left} $\square$ if only one of these requirements is verified. Example: a sketch is left normal if all the specified cocones are of colimit form. We may drop the decorations $(\skL_\sS, \skC_\sS)$ when evident from the context.

\end{enumerate}

\end{defn}

\begin{defn}[\textit{Morphism of Sketches}]
A morphism of sketches $F\colon \sS\to \sT$ is a functor between underlying categories with the property that:
\begin{itemize}
    \item for every cone $\pi$ in $\skL_\sS$, the image $F\pi$ is naturally isomorphic to a cone in~$\skL_\sT$;
    \item  for every cocone $j$ in $\skC_\sS$, $Fj$ is naturally isomorphic to a cocone in $\skC_\sT$.
\end{itemize}
\end{defn}

\begin{rem}[\textit{On the strictness of the notion of morphism}]
Notice that this notion of morphism of sketch is not the usual one (see for instance \cite[D.2.1.1~(b)]{Sketches}). The usual notion of morphism of sketch strictly preserves the (co)cones. Our choice will be better justified by the examples following the next definition.
\end{rem}

\begin{defn}[\textit{$2$-category of Sketches}] \label{sktcats}
The $2$-category of sketches $\Skt$ has objects (possibly large but locally small) sketches, $1$-cells morphisms of sketches and $2$-cells natural transformations between them. The $2$-category $\skt$ is the full sub $2$-category of $\Skt$ containing small sketches,
\[\skt \hookrightarrow \Skt.\]
We denote with $\limskt$/$\limSkt$ and $\colimskt$/$\colimSkt$ the sub-2-categories of $\skt$/$\Skt$ with objects limit and colimit sketches, respectively. 
\end{defn}

\begin{rem}\label{rem:csk-lsk-corefl-in-skt}
Clearly, there are forgetful 2-functors $U_c\colon \Skt\to\colimSkt$ and $U_l\colon \Skt\to\limSkt$ with $U_c(\cS,\skL,\skC)=(\cS,\emptyset,\skC)$ and $U_l(\cS,\skL,\skC)=(\cS,\skL,\emptyset)$. Moreover, these sub-2-categories are coreflective.
\[\begin{tikzcd}[ampersand replacement=\&]
	\colimSkt \& \Skt \& \limSkt
	\arrow[""{name=0, anchor=center, inner sep=0}, "{I^c}"', curve={height=12pt}, hook, from=1-1, to=1-2]
	\arrow[""{name=1, anchor=center, inner sep=0}, "{I^l}", curve={height=-12pt}, hook', from=1-3, to=1-2]
	\arrow[""{name=2, anchor=center, inner sep=0}, "{U_l}", curve={height=-12pt}, from=1-2, to=1-3]
	\arrow[""{name=3, anchor=center, inner sep=0}, "{U_c}"', curve={height=12pt}, from=1-2, to=1-1]
	\arrow["\top"{description}, draw=none, from=3, to=0]
	\arrow["\top"{description}, draw=none, from=2, to=1]
\end{tikzcd}\]

 It is also interesting to notice that a sketch structure on a category is literally the data of a limit sketch and colimit sketch structure, providing the 2-pullback below left. Hence, for given any two sketches $\sS$ and $\sT$, a functor $F\colon \cS\to\cT$ is a morphism of sketches if and only if it is 1-cell both in $\Skt_c$ and $\Skt_l$, i.e. the square below right is also a pullback. 

\begin{center}
\begin{tikzcd}[ampersand replacement=\&]
	\Skt \& {\Skt_c} \\
	{\Skt_l} \& \Cat
	\arrow[from=1-1, to=1-2]
	\arrow[from=1-1, to=2-1]
	\arrow[""{name=0, anchor=center, inner sep=0}, from=2-1, to=2-2]
	\arrow[from=1-2, to=2-2]
	\arrow["\lrcorner"{anchor=center, pos=0.125}, draw=none, from=1-1, to=0]
\end{tikzcd} $\quad$
\begin{tikzcd}[ampersand replacement=\&]
	{\Skt(\sS,\sT)} \& {\Skt_c(U^c\sS,U^c\sT)} \\
	{\Skt_l(U^l\sS,U^l\sT)} \& {\Cat(\cS,\cT)}
	\arrow[from=1-1, to=1-2]
	\arrow[from=1-1, to=2-1]
	\arrow[""{name=0, anchor=center, inner sep=0}, from=2-1, to=2-2]
	\arrow[from=1-2, to=2-2]
	\arrow["\lrcorner"{anchor=center, pos=0.125}, draw=none, from=1-1, to=0]
\end{tikzcd}
\end{center}

\end{rem}

\begin{notat}[Decorating forgetful functors] \label{decoratedforg}
    As it just happened in the Remark above, this paper will contain a number of forgetful functor between different $2$-categories of sketches. Some abuse of notation will be unavoidable. When it's possible we will stick to the following notation. For  $\blacksquare\skt$ and $\square\skt$ $2$-categories of decorated sketches of some form, we shall call $U^\blacksquare_\square$ a forgetful functor whose domain is $\blacksquare\skt$ and codomain is $\square\skt$, \[U^\blacksquare_\square: \blacksquare\skt \to \square\skt.\]
\end{notat}
\subsubsection{Some examples: Doctrines, sites, topoi}

\begin{exa}[\textit{First order doctrines}] \label{doctrines} The blueprint of functorial semantics à la Lawvere is captured by the notion of sketch.  Every fragment of first order logic (cartesian, regular, full first order, geometric, coherent) is usually encoded in the general framework of \textit{exactness properties} or \textit{lex colimits}, 
 forming so called $2$-categories of \textit{theories}.  All of these 2-categories  admit a canonical $2$-functor in the $2$-category of sketches, as we shall discuss below. A good reference for the discussion below is \cite[D2.1]{Sketches}.
\begin{enumerate}[(a)]
    \item Let $\cprod$ be the $2$-category of small categories with finite products, functors preserving them and natural transformation. As discussed by many, these categories provide a syntax that can be used to present multisorted varieties in the sense of universal algebra \cite{adamek2003duality}. 
    We have a $2$-functor, \[i\colon \cprod \to \skt\] equipping a category with finite products with the (normal limit) sketch structure whose limit diagrams are the finite product cones. Of course this embedding is locally fully faithful, as the notion of product preserving functor coincides with that of morphism of sketches in this case.
    \item Let $\lex$ be the $2$-category of small categories with finite limits, functors preserving them and natural transformations. It was shown by Freyd \cite{FreydCartesianLogic,Aspects} that a category with finite limits can be thought as a place-holder for an essentially algebraic theory, in a sense that was later clarified by many (see \cite[3.D]{adamek_rosicky_1994} or \cite{di2021functorial}). Similarly to the previous case, we have a locally fully faithful $2$-functor 
    \[i\colon  \lex \to \skt.\]
    \item Let $\disj$ be the $2$-category of lex-tensive categories \cite[Sec 4.4]{carboni1993introduction}, coproduct preserving lex functors, and natural transformation. It was shown by Johnstone (\cite{johnstone2006syntactic} and \cite[Sections D1 and D2]{Sketches}) that these categories offer an adequate functorial semantics to discuss disjunctive logic. Clearly, we can equip every lextensive category with a mixed sketch structure, which provides us with a locally fully faithful $2$-functor 
    \[i\colon \disj \to \skt.\]
    \item Let $\reg$ be the $2$-category of regular categories, regular functors and natural transformation. These categories are the syntactic categories of regular theories, as discussed by Butz \cite{butz1998regular}. Recall that a regular functor can be understood as a lex functor that preserve regular epimorphisms. Thus, similarly to the previous example, we can equip a regular category with a (normal mixed) sketch structure whose limit cones are finite limit cones and colimit cones are the pushouts witnessing the property of being an epimorphism.  Without surprise, these are called regular sketches in the literature. We obtain in this way a locally fully faithful $2$-functor 
    \[i\colon  \reg \to \skt.\]
    \item Other $2$-categories of first order \textit{theories} like $\coh$ (the $2$-category of coherent categories) and more generally everything that falls in the general landscape of lex colimits \cite{garner2012lex}, admit a similar behavior and can be swallowed by sketches.
\[\begin{tikzcd}[ampersand replacement=\&]
	\&\& \disj \\
	\coh \& \reg \&\& \lex \& \cprod \\
	\&\& \skt
	\arrow[""{name=0, anchor=center, inner sep=0}, curve={height=6pt}, tail, from=2-1, to=3-3]
	\arrow[""{name=1, anchor=center, inner sep=0}, tail, from=2-2, to=3-3]
	\arrow[""{name=2, anchor=center, inner sep=0}, tail, from=2-4, to=3-3]
	\arrow[""{name=3, anchor=center, inner sep=0}, curve={height=-6pt}, tail, from=2-5, to=3-3]
	\arrow[from=2-1, to=2-2]
	\arrow[from=1-3, to=2-4]
	\arrow[from=2-4, to=2-5]
	\arrow[""{name=4, anchor=center, inner sep=0}, from=1-3, to=3-3]
	\arrow[from=2-2, to=2-4]
	\arrow[shorten <=4pt, shorten >=4pt, Rightarrow, from=3, to=2]
	\arrow[shorten <=4pt, shorten >=4pt, Rightarrow, from=1, to=0]
	\arrow[shorten <=4pt, shorten >=4pt, Rightarrow, from=2, to=4]
	\arrow[shorten <=6pt, shorten >=6pt, Rightarrow, from=2, to=1]
\end{tikzcd}\]
    It is quite important to notice that the diagram above is not strictly commutative, and indeed this is acknowledging the fact that the horizontal $2$-functors are forgetful functors, which destroy some of the information that instead the vertical ones retain. 
\item It goes without saying that the large correspondents of the above-mentioned $2$-categories  locally fully faithfully in $\Skt$, to fix the notation, and clarify the statement, we shall write one example: $\Lex \to \Skt$.
\end{enumerate}

Notice that our choice of morphisms for $\skt$ is the only one making these inclusions possible, otherwise we would need to choose a stricter notion of preservation of limits in $\cprod, \lex, \reg, \coh$, which would be un-natural. 
\end{exa}

\begin{rem}[Sketches are presentations!]\label{rmk:skt-corrsp-to-diff-presentations}
    One might have noticed that in part (b) of \Cref{doctrines} we could have chosen a different sketch structure for a category with finite limits. 
    Indeed, given a category with finite limits $\cC$, one could also consider the sketch structure with cone specified only the equalisers and finite products, giving a sketch $j\cC$. 
    This still produce a fully faithful 2-functor 
    $$j\colon\lex\to\skt,$$
    since finite limits are constructed by equilisers and finite products. 
    It is interesting to notice that the identity is a sketch morphism \emph{only in one direction}, being $j\cC\to i\cC$. 
    This fits well with the idea that sketches are  \emph{presentations} of theories and in this case we have indeed two different presentations of the same theory.

    Later, we will introduce another notion of equivalence for sketches (see \Cref{def:morita-equiv}) which will describe exactly when two sketches represent the same theory. 
\end{rem}

\begin{exa}[\textit{(Lex) sites}] \label{sites}
Sites are used to present topoi, and they can encode among other constructions the syntactic category of a geometric theory, offering a perfect framework to discuss geometric logic. For the sake of simplicity, in this paper a site $(C,J)$ is by definition lex, meaning that $C$ has finite limits. Following \cite[D.1.4(g)]{Sketches}, we can turn a lex site into a mixed sketch, and a morphism of sites is precisely a morphism between the associated sketches. This gives us a locally fully faithful embedding,
\[\sites \to \skt.\]
\end{exa}

Our final example of interest regards the canonical sketch structure on a topos, in order to discuss it, let us recall Diaconescu theorem below, which will be helpful for the discussion.

\begin{notat}[\textit{Morita small site}] \label{moritafirstappearence}
    In the statement below, we say that a (possibly large) site is Morita small if it admits a small dense subsite (see \cite[C2.2 page 548 and C2.2.1]{Sketches}).
\end{notat}

\begin{thm}[\textit{Diaconescu}] \label{diac1}
There exists a biadjunction $\mathsf{Sh} \dashv J_{(-)}$ between the $2$-category of topoi and the $2$-category of Morita small sites.
\[\begin{tikzcd}[ampersand replacement=\&]
	{\mathsf{MSite}} \& {\mathsf{Topoi}^{\text{op}}}
	\arrow[""{name=0, anchor=center, inner sep=0}, "{J_{(-)}}"', shift right, curve={height=6pt}, from=1-2, to=1-1]
	\arrow[""{name=1, anchor=center, inner sep=0}, "{\mathsf{Sh}}"', shift right, curve={height=6pt}, from=1-1, to=1-2]
	\arrow["\top"{description}, draw=none, from=0, to=1]
\end{tikzcd}\]
$\mathsf{Sh}$ is taking sheaves over the site, while $ J_{(-)}$ equips a topos with the so-called canonical topology. Moreover, the counit $\epsilon\colon  \mathsf{Sh}(\mathcal{E}, J_\mathcal{E}) \to \mathcal{E}$ of this adjunction is an equivalence of categories.
\end{thm}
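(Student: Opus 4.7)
The plan is to reduce this to the classical universal property of sheaf toposes and then handle the Morita-small refinement by passage to a small dense subsite. I would proceed as follows.

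First, for a small site $(C,J)$, I would recall the classical presentation of $\mathsf{Sh}(C,J)$ as the reflective localisation of $\mathsf{Psh}(C)$ at the $J$-covering sieves, together with the universal property that geometric morphisms $\mathcal{F}\to\mathsf{Sh}(C,J)$ correspond (pseudo-naturally in $\mathcal{F}$) to flat continuous functors $C\to\mathcal{F}$. Since $C$ is assumed lex, flatness coincides with preservation of finite limits, and ``$J$-continuous into $\mathcal{F}$'' unfolds precisely to ``sends $J$-covers to $J_\mathcal{F}$-covers'', because the canonical topology $J_\mathcal{F}$ is designed to be the largest subcanonical topology and hence covers in $J_\mathcal{F}$ are exactly the jointly epimorphic families whose induced diagram is a colimit. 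Thus the hom-category of morphisms of sites $(C,J)\to(\mathcal{F},J_\mathcal{F})$ is equivalent to the hom-category of geometric morphisms $\mathcal{F}\to\mathsf{Sh}(C,J)$, delivering the biadjunction on small sites.

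Second, for the Morita-small case, I would use the defining property: if $(C,J)$ admits a small dense subsite $(C_0,J_0)\hookrightarrow(C,J)$, the Comparison Lemma yields an equivalence $\mathsf{Sh}(C_0,J_0)\simeq\mathsf{Sh}(C,J)$. Moreover, any morphism of sites out of $(C,J)$ restricts to one out of $(C_0,J_0)$, and conversely any morphism out of $(C_0,J_0)$ extends essentially uniquely (up to coherent isomorphism) to $(C,J)$, because the dense embedding is cofinal for the data witnessing flatness and continuity. Hence the universal property established in the small case propagates to Morita-small sites, and the assignment $\mathsf{Sh}$ becomes functorial on $\mathsf{MSite}$ only after making this choice of dense subsite, which is well-defined up to equivalence.

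Third, for the counit $\epsilon\colon\mathsf{Sh}(\mathcal{E},J_\mathcal{E})\to\mathcal{E}$ being an equivalence, I would argue as follows. The canonical topology $J_\mathcal{E}$ is subcanonical, so the Yoneda embedding $y\colon\mathcal{E}\to\mathsf{Sh}(\mathcal{E},J_\mathcal{E})$ lands in sheaves and is fully faithful. It is dense (every sheaf on $\mathcal{E}$ is a colimit of representables by the canonical covers), so one invokes the Comparison Lemma once more to conclude that $y$ is an equivalence, and this equivalence is an inverse to the counit under the unit-counit identification provided by the biadjunction.

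The main obstacle I expect is book-keeping rather than a conceptual difficulty: one must verify that the equivalences of hom-categories are pseudo-natural in both variables, not just pointwise, so that they actually assemble into a biadjunction rather than a bare bijection on objects. This is standard but tedious; the cleanest route is to exhibit $\mathsf{Sh}$ and $J_{(-)}$ as parts of a pseudo-functor structure by checking compatibility with composition of morphisms of sites and of geometric morphisms on Yoneda-restricted data, and then appeal to the general machinery producing a biadjunction from a pseudo-natural family of equivalences of hom-categories.
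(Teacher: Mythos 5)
The paper does not actually prove this statement: it simply cites the Elephant (C2.3.9 for the biadjunction and C2.2.7 for the counit being an equivalence), and your outline is precisely the standard argument contained in those references --- Diaconescu's correspondence between geometric morphisms and flat continuous functors (with flatness reducing to lex-preservation for lex sites), the Comparison Lemma to reduce a Morita-small site to a small dense subsite, and subcanonicity plus density of the Yoneda embedding for the counit. Your proposal is therefore correct and takes essentially the same (classical) route as the proof the paper defers to.
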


 We underline that \Cref{diac1} is often stated in the more down-to earth (and a bit weaker) form that there exists an equivalence of categories as below.
\[\mathsf{Topoi}(\,\mathcal{E}, \mathsf{Sh}(C,J)\,) \simeq \mathsf{MSite}(\,(C,J),(\mathcal{E},J_\mathcal{E})\,)\]

\begin{proof} \emph{(of \Cref{diac1})}
    See \cite[C2.3.9]{Sketches} for the biadjunction and \cite[C2.2.7]{Sketches} for the counit.
\end{proof}

\begin{exa}[\textit{Topoi}] \label{firsttimetopos}

We can equip any topos $\mathcal{E}$ with a sketch structure by defining $\skC$ as all colimit cocones and $\skL$ as all finite limit cones. Clearly, this gives us the dashed $2$-functor in the diagram below.

\[\begin{tikzcd}
	{\mathsf{MSite}} && {\mathsf{Topoi}^\text{op}} \\
	& {\mathsf{Skt}}
	\arrow["{\ref{sites}}"{description}, from=1-1, to=2-2]
	\arrow["{\ref{diac1}}"', from=1-3, to=1-1]
	\arrow[dashed, from=1-3, to=2-2]
\end{tikzcd}\]
 The only delicate point of this construction is of course its variance, as we have to choose the inverse image part of a geometric morphism. It is easy to see that this dashed $2$-functor coincides with the composition of \Cref{sites} and \Cref{diac1}, so that the diagram above commutes.
\end{exa}

\subsection{A first encounter with Morita theory} \label{moritaprimavolta}

In the previous subsection we have briefly discussed the relevance of Morita small sites (\Cref{moritafirstappearence}) in the statement of Diaconescu's theorem. Indeed, a topos is not a small site, but its information can be presented via a set of generators, which provides a number of different dense subsites. In this brief subsection we introduce an appropriate notion of Morita small sketch, which will adapt later in order to treat our generalization of Diaconescu theorem in the last section.

\begin{defn}[\textit{Test sketch}]
A \textit{test} sketch $\sM$ is a normal sketch whose underlying subcategory $\cM$ is complete and cocomplete and whose classes $\skL$ and $\skC$ consist of all small limit/colimit diagrams.
\end{defn}

\begin{rem}
    In the definition of \emph{test sketch} we are acknowledging a general tendency of the literature of studying only models of a sketch into complete and cocomplete categories. Indeed, this is nothing but a sketch morphism $\sS\to\sM$ into a test sketch. Often, authors restrict even to $\sM=\Set$.
\end{rem}

\begin{defn}[\textit{Test Morita equivalence}]\label{def:morita-equiv}
A morphism of sketches $F\colon  \sS \to \sT$ is a \textit{test Morita equivalence} if, for all test sketches $\sM$, the induced map between hom-categories is an equivalence of categories,\[-\circ F=F^*\colon  \Skt(\sT, \sM) \to \Skt(\sS, \sM).\]
\end{defn}

\begin{defn}[\textit{Test Morita small sketch}] \label{Moritasmall}
A sketch $\sT$ is test Morita small if there exist a test Morita equivalence $F\colon  \sS \to \sT$ whose domain is a small sketch. 
\end{defn}

At this stage we have introduced test Morita equivalences to prepare the ground for a parametric version of this notion that will be  used  later in \Cref{sec:left-sketches}. In particular, this paper will focus on the stronger notion of \textit{left Morita equivalence} (\Cref{def:strong-morita-equiv}). This said though, test Morita equivalences are a very natural notion, especially from the perspective of the existing literature, and would frame an interesting theory. For example, the cloud of results surrounding Gabriel-Ulmer duality suggests the conjecture below.

\begin{conj}
A morphism of limit sketches $F\colon\sS\to\sT$ is a test Morita equivalence if and only if it induces an equivalence between categories below, where $\Set$ is the category of sets with the natural test structure,
\[F^*\colon  \Skt(\sT, \Set) \to \Skt(\sS, \Set).\]
\end{conj}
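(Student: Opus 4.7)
The forward implication is immediate from the definition, since $\Set$ equipped with its natural test structure is a test sketch. All the content therefore lies in the converse, and a plausible route passes through a Gabriel--Ulmer-style recognition theorem for the theory presented by a limit sketch.

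The plan is as follows. First, by using test Morita smallness (\Cref{Moritasmall}) I would replace $\sS$ and $\sT$ with small limit sketches; if the statement is sensitive to this step one can simply restrict the statement to small or Morita small $\sS,\sT$ a priori. Assume moreover that the arities of the cones in $\skL_\sS$ and $\skL_\sT$ are bounded by some regular cardinal $\lambda$. To each small $\lambda$-ary limit sketch $\sS$ one associates its \emph{syntactic category} $\cT_\sS$, the free completion of $\cS$ under $\lambda$-small limits subject to imposing the cones in $\skL_\sS$ (realised, concretely, as the closure of the image of $\cS$ under $\lambda$-small limits inside $\Skt(\sS,\Set)^\op$ via Yoneda). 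The category $\cT_\sS$ carries a canonical limit-sketch structure $\sT_\sS$, and for any test sketch $\sM$ one has, by the universal property of $\sT_\sS$, an equivalence
\[\Skt(\sS,\sM)\;\simeq\;\Skt(\sT_\sS,\sM).\]
A morphism of limit sketches $F\colon\sS\to\sT$ induces by functoriality a comparison $\sT_F\colon\sT_\sS\to\sT_\sT$.

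The key step is then Gabriel--Ulmer duality: taking $\sM=\Set$, the hypothesis that $F^*$ is an equivalence says exactly that the induced functor between the locally $\lambda$-presentable categories $\Skt(\sS,\Set)$ and $\Skt(\sT,\Set)$ is an equivalence; passing to opposites of their subcategories of $\lambda$-presentable objects recovers $\sT_F$ and shows that $\sT_F$ is itself an equivalence of limit sketches. Once this is established, the chain of equivalences
\[\Skt(\sT,\sM)\;\simeq\;\Skt(\sT_\sT,\sM)\;\simeq\;\Skt(\sT_\sS,\sM)\;\simeq\;\Skt(\sS,\sM)\]
valid for every test sketch $\sM$ gives the desired test Morita equivalence.

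The principal obstacle is exactly the control of arities. \Cref{skt} only demands that the specified diagrams be essentially small, with no uniform bound, so in general $\Skt(\sS,\Set)$ need not be $\lambda$-presentable for any single $\lambda$ and the syntactic category $\sT_\sS$ escapes the usual Gabriel--Ulmer framework. Resolving this genuinely requires either enforcing a cardinal bound as an additional hypothesis, writing $\sS$ as a filtered colimit of $\lambda$-ary limit sketches and running the argument level-wise, or else bypassing presentability in favour of a direct density argument for representables inside an appropriate $2$-categorical free completion; this is where the conjecture is genuinely delicate and, I suspect, why it is stated as a conjecture rather than a theorem.
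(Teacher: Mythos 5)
There is no proof to compare against: the statement you were asked to prove is stated in the paper as a \emph{conjecture}, and the authors explicitly say that despite some attempts they have not managed to prove it, that ``the difficulty seems to lie in some delicate size issues,'' and that one might hope to circumvent these by restricting test sketches to LAFT categories in the sense of Brandenburg. Your proposal is therefore best judged as a proof \emph{plan}, and as such it is honest and well-aimed but does contain a genuine, unresolved gap --- the very gap you name yourself. The Gabriel--Ulmer route is indeed the natural one (the paper itself says the conjecture is ``suggested by the cloud of results surrounding Gabriel--Ulmer duality''), and the forward implication is trivial as you say. But the converse, as you write it, only goes through under the additional hypothesis that the arities of the specified cones of $\sS$ and $\sT$ are uniformly bounded by some regular cardinal $\lambda$: without that, $\Skt(\sS,\Set)$ need not be locally $\lambda$-presentable for any $\lambda$, the syntactic category $\sT_\sS$ need not be essentially small even when $\cS$ is, and the duality step collapses. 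Your three suggested escape routes (imposing a bound, writing $\sS$ as a filtered colimit of $\lambda$-ary sketches, or a direct density argument) are all left unexecuted, and the second in particular is delicate because test Morita equivalence quantifies over \emph{all} test sketches $\sM$, not just $\Set$, so a level-wise argument must be shown to assemble coherently.

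One further point worth flagging even in the bounded-arity case: Gabriel--Ulmer duality tells you that an equivalence of locally $\lambda$-presentable categories restricts to an equivalence of their subcategories of $\lambda$-presentable objects, but you still need to identify that restricted equivalence with the comparison $\sT_F$ induced by $F$ --- i.e.\ to check that the left biadjoint of $F^*$ is computed by left Kan extension and that it preserves $\lambda$-presentables compatibly with the construction of $\sT_\sS$ inside $\Skt(\sS,\Set)^{\op}$. This is standard but not automatic, and should be spelled out. In summary: your diagnosis of where the difficulty lies coincides exactly with the authors' own, and neither you nor they have closed it; the statement remains open in the paper.
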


Despite some attempts, we have not managed to provide a proof of the conjecture above. The difficulty seems to lie in some delicate size issues. It seems plausible that, by requiring test sketches to be LAFT categories in the sense of Brandenburg \cite[Remark~3.9]{brandenburg2021large}, these issues may be circumvented, and a proof of (a version of) the conjecture above may be delivered. Yet, the technology needed to show this result would have brought us too far from the general purpose of this paper.

\section{$2$-dimensional aspects of sketches}\label{sec:2-dim-asp-skt}

In the following section, we will show some important 2-dimensional properties of the 2-category of sketches $\skt/\Skt$, which will be useful in later sections. 

\subsection{A topological behavior}\label{sec:top-beh}
We start by studying certain \emph{topological} properties of the forgetful functors
\begin{center}
    $U^s\colon  \skt\to\cat$ and $U^S\colon \Skt\to\Cat$,
    
\end{center}
in the sense of \emph{topological functors} \cite{HERRLICH1974125}. More precisely, we will focus on the aspects of this theory regarding co/limits. In fact, given a topological functor $T\colon \cX\to\cY$ between categories, one can compute co/limits in $\cX$ using co/limits in $\cY$. The prototypical example is the forgetful functor $U\colon \Top\to\Set$. For instance, given two topological spaces $X,Y\in\Top$, to calculate the product in $\Top$ we consider the product of $UX\times UY$ in $\Set$ and equipped it with the minimal topology making the two projections continous. In this section we will prove two preliminary results, \Cref{prop:almost-w-lim} and \ref{prop:almost-w-colim}, which will be useful in \Cref{sec:w-p-co/lim} to give formulas for weigthed pseudo-co/limits of sketches.

\begin{rem}
    The results in this sections suggest that the forgetful 2-functors $U_s$ and $U_S$ should be \emph{2-topological}, the appropriate 2-dimensional counterpart of topological functors which is not yet present in the literature. Since our main motivation (the formulas for weighted pseudo-co/limits) does not need it, we shall not treat this notion here and leave it to more dedicated venues.
    A good starting point would be to adapt the approach in \cite{Garner_top-funct} using the theory of 2-fibrations \cite{10.1007/BFb0063102,Buck:2-fib}. 
\end{rem}

\begin{prop}[\textit{Minimal and maximal sketch structures}]
\label{prop:min-max}
Let $\lbrace \sS_i\mid i\in I\rbrace$ a family of sketches and $\cC$ a category. 
\begin{enumerate}
    \item Given a family of functors $\mathfrak{F}:=\lbrace F_i\colon  \cS_i\to\cC\rbrace$, there exists a \emph{minimal} sketch structure $\sC$ on $\cC$ making each $F_i$ sketch morphisms. \\
     Moreover, with this sketch structure, given any sketch $\sY$, a functor $G\colon \cC\to\cY$ is a sketch morphism if and only if each $GF_i\colon \cS_i\to\cY$ is a sketch morphism.
    
    \item Given a family of functors $\mathfrak{G}:=\lbrace G_i\colon  \cC\to\cS_i\rbrace$, there exists a \emph{maximal} sketch structure on $\cC$ making each $G_i$ sketch morphisms. \\
    Moreover, with this sketch structure, given any sketch $\sX$, a functor $F\colon \cX\to\cC$ is a sketch morphism if and only if each $G_iF\colon \cX\to\cS_i$ is a sketch morphism. 
\end{enumerate}
\end{prop}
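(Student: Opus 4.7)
The plan is to give explicit constructions of the two sketch structures, after which the universal properties follow from closure of sketch morphisms under composition combined with the ``up to natural isomorphism'' tolerance built into the definition of morphism of sketches. The two parts are perfectly dual, so I would treat them in parallel.

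For (1), I would set
\[
\skL_\sC := \{\, F_i\pi \mid i\in I,\ \pi\in\skL_{\sS_i}\,\}, \qquad \skC_\sC := \{\, F_i j \mid i\in I,\ j\in\skC_{\sS_i}\,\}.
\]
Since each $F_i$ is a functor, essentially small indexing diagrams go to essentially small ones, so this defines a valid sketch structure on $\cC$ in the sense of \Cref{skt}. With this choice each $F_i$ is a sketch morphism on the nose, and any strictly smaller class would omit some $F_i\pi$ or $F_i j$, making minimality essentially tautological. For the universal property, the forward implication is just composition of sketch morphisms, and for the converse, any cone in $\skL_\sC$ has the form $F_i\pi$ with $\pi\in\skL_{\sS_i}$, whence $G(F_i\pi) = (GF_i)(\pi)$ is naturally isomorphic to a cone in $\skL_\sY$ by hypothesis on $GF_i$; the cocone case is symmetric.

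For (2), dually, I would let $\skL_\sC$ consist of \emph{all} cones $\pi$ on $\cC$ such that, for every $i\in I$, $G_i\pi$ is naturally isomorphic to some cone in $\skL_{\sS_i}$, and define $\skC_\sC$ analogously. By construction every $G_i$ is a sketch morphism, and this is manifestly the largest class for which this can be arranged. The forward implication of the universal property is composition again. For the converse, given $F\colon\cX\to\cC$ with each $G_iF$ a sketch morphism and $\pi\in\skL_\sX$, for every $i$ the cone $G_i(F\pi)=(G_iF)(\pi)$ is naturally isomorphic to a cone in $\skL_{\sS_i}$, which is exactly the membership condition for $F\pi\in\skL_\sC$; cocones are symmetric.

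I do not foresee any substantial obstacle: the content is pure bookkeeping of (co)cones along functors. The only point deserving attention is the ``up to natural isomorphism'' clause in the definition of sketch morphism. In (1) it relieves us of the obligation to close $\skL_\sC$ and $\skC_\sC$ under natural isomorphism by hand (and makes the minimality statement mildly informal, in that any class generating the same natural-isomorphism closure would also work). In (2), conversely, it is precisely what renders the defining condition invariant under natural isomorphism, so that $F\pi$ itself lies in $\skL_\sC$ and can serve directly as the required cone.
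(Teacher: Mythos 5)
Your proposal is correct and follows essentially the same route as the paper: the minimal structure is the union of the images $F_i\pi$, $F_i j$, the maximal structure is the class of (co)cones whose images under all $G_i$ are isomorphic to specified ones, and both universal properties reduce to the same unwinding of definitions. Your side remark about the ``up to natural isomorphism'' clause is a fair observation that the paper leaves implicit.
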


\begin{proof}
\begin{enumerate}
\item[]
    \item We define $\skL^m_\mathfrak{F}$ as the union of all cones $F_i\pi$ for any $i\in I$ and $\pi\in\skL_i$, and similarly $\skC^m_\mathfrak{F}$ the union of all cocones $F_ij$ for any $i\in I$ and $j\in\skL_i$. Clearly $F_i$ become maps of sketches and the classes $\skL^m_\mathfrak{F}$ and $\skC^m_\mathfrak{F}$ are the smallest classes with this property. \\
    Let us consider now a sketch $\sY$ and a functor $G\colon \cC\to\cY$ such that, for any $i$ the functor $GF_i\colon \sS_i\to\sY$ is a sketch morphism. This means that for any $\delta\in\skL_i/\skC_i$, $GF_i\delta$ is isomorphic to an element of $\skL_\sY/\skC_\sY$, which clearly implies that $G$ is a sketch morphism.  
    
    \item We consider $\skL^M_\mathfrak{G}$ as all cones $\pi$ in $\cC$ such that for any $i\in I$ there exists a cone $\pi'\in\skL_i$ such that $G_i\pi\cong\pi'$. Similarly $\skC^M_\mathfrak{G}$ consists of all cocones $j$ in $\cC$ such that for all $i\in I$ there exists a cocone $j'\in\skL_i$ such that $G_ij\cong j'$. Clearly this definition makes all $G_i$ sketches morphisms and the classes $\skL^M_\mathfrak{G}$ and $\skC^M_\mathfrak{G}$ are the biggest classes with this property. \\
    Let us consider now a sketch $\sX$ and a functor $F\colon\cX\to\cC$ such that, for any $i$ $G_iF\colon \sX\to\sS_i$ is a sketch morphism. Then, for any $\delta\in\skL_\sX/\skC_\sX$ and $i\in I$ there exists a $\delta'\in\skL_i/\skC_i$ such that $G_iF\delta\cong\delta'$, hence $F\delta\in\skL^M_\mathfrak{G}/\skC^M_\mathfrak{G}$. 

\end{enumerate}
\end{proof}





For sites, the situation with a single morphism was proven in \cite[Lemma C2.3.12 and C2.3.13]{elephant2}. Then, \cite[Lemma C2.3.14]{elephant2} proves that this gives us a formula to calculate weigthed limits/colimits in Sites from weigthed limits/colimits in $\Cat$. 

Below we show how something very similar is possible also for sketches. Moreover, these technical propositions (\ref{prop:almost-w-colim} and \ref{prop:almost-w-lim}) will be helpful in the concrete construction of pseudo-co/limits of sketches (see Theorems~\ref{thm:skt-has-p-lim} and \ref{thm:skt-has-p-colim}).

\begin{notat}[\textit{Weighted pseudo-co/limits}]
\label{notat:weight-co/proj}
  Let $W\colon \cI\to\cat$ be a weight and $D\colon \cI\to\skt$ a diagram of sketches. A $W$-weighted $U_sD$-cone consists of a category $\cC\in\cat$ together with a natural transformation 
  $$\pi_\cC\colon W\to\cat(\cC,U_sD-).$$
 Let $\cD_i:=U_sD(i)$, then we denote with $p_w^i\colon \cC\to\cD_i$ the $(i,w)$-projections of the cone, i.e. the image of $w\in W(i)$ under the $i$-component of $\pi$.
 \begin{center}
\begin{tikzcd}[ampersand replacement=\&]
	{W(i)} \& {\cat(\cC,\cD_i)}
	\arrow["{\pi_i}", from=1-1, to=1-2]
\end{tikzcd} \\
\begin{tikzcd}[ampersand replacement=\&]
	w \& {\cC\xrightarrow{p_w^i}\cD_i}
	\arrow[maps to, from=1-1, to=1-2]
\end{tikzcd}
 \end{center}
 Moreover, we define 
 $$\mathfrak{L}:=\lbrace p_w^i\mid i\in \cI,w\in W(i)\rbrace.$$
 For any category $\cX$, we denote with 
 $$\Phi\colon \cat(\cX,\cC)\to[\cI,\cat](\,W,\cat(\cX,U_sD-)\,)$$ 
 the functor sending $F\colon\cS\to\cC$ to the transformation $W\to\cat(\cS,U_sD-)$ with $i$-component sending an object $w\in W(i)$ to ${p_w^i}\circ{F}\colon \cS\rightarrow\cC\rightarrow\cD_i$.

 Similarly, for a $W$-weighted $U_sD$-cocone with covertex $\cC$, we write $q_w^i\colon \cD_i\to\cC$  for the $(i,w)$-coprojections, we define 
 $$\mathfrak{C}:=\lbrace q_w^i\mid i\in \cI,w\in W(i)\rbrace,$$
 and, for any category $\cY$, we denote with $\Psi$ the associated functor, 
 $$\Psi\colon \cat(\cC,\cY)\to[\cI,\cat](\,W,\cat(U_sD-,\cY)\,).$$

\end{notat}

\begin{prop}
\label{prop:almost-w-lim}
    Let $W\colon \cI\to\cat$ be a weight, $D\colon \cI\to\skt$ a diagram of sketches and $\cC\in\cat$ a vertex of a $W$-weighted $U_sD$-cone. For any $\sS\in\skt$, the sketch $(\cC,\skL^M_{\mathfrak{L}},\skC^M_{\mathfrak{L}})$ makes the diagram below a pullback in $\cat$. \footnote{The functor $\Phi_s$ is well-defined because the sketch structure of $\sC$ makes all of the weighted-projections morphisms of sketches.}
    \begin{equation}
        \label{diag:pull-max-sk}
\begin{tikzcd}[ampersand replacement=\&]
	{\skt(\sS,\sC)} \& {\cat(\cS,\cC)} \\
	{[\cI,\cat](\,W,\skt(\sS,D-)\,)} \& {[\cI,\cat](\,W,\cat(\cS,U_sD-)\,)}
	\arrow["{U_s}", from=1-1, to=1-2]
	\arrow["{U\circ-}"', from=2-1, to=2-2]
	\arrow["\Phi", from=1-2, to=2-2]
	\arrow["{\Phi_s}"', from=1-1, to=2-1]
\end{tikzcd}
\end{equation}
\end{prop}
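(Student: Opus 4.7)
The plan is to deduce this from the universal property of the maximal sketch structure established in \Cref{prop:min-max}(2). First, I would unwind what an object of the pullback of the cospan $\Phi$ and $U\circ -$ is: a pair $(F,\alpha)$ consisting of a functor $F\colon\cS\to\cC$ and a natural transformation $\alpha\colon W\Rightarrow\skt(\sS,D-)$ satisfying $U\circ\alpha=\Phi(F)$. Since the forgetful $U_s\colon\skt\to\cat$ is (locally) faithful, such an $\alpha$ is uniquely determined by $F$ when it exists; the condition for existence is precisely that each composite $p_w^i\circ F\colon\cS\to\cD_i$, for $i\in\cI$ and $w\in W(i)$, lifts to a morphism of sketches $\sS\to D(i)$.

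Next, I would invoke \Cref{prop:min-max}(2) applied to the family $\mathfrak{L}=\lbrace p_w^i\mid i\in\cI,\,w\in W(i)\rbrace$. By construction, $\sC=(\cC,\skL^M_{\mathfrak{L}},\skC^M_{\mathfrak{L}})$ is the sketch structure on $\cC$ characterised by the property that a functor $F\colon\cX\to\cC$ from a sketch $\sX$ is a sketch morphism $\sX\to\sC$ if and only if every $p_w^i\circ F\colon\sX\to D(i)$ is a sketch morphism. Specialising to $\sX=\sS$, this exactly matches the condition identified in the previous paragraph, so the objects of the pullback are in bijection with the objects of $\skt(\sS,\sC)$, and the projections recover $U_s$ and $\Phi_s$.

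Finally, I would check the 2-cell level: a morphism in $\cat(\cS,\cC)$ is a natural transformation $\eta\colon F\Rightarrow G$, and a morphism in $\skt(\sS,\sC)$ is the same data with no further condition (the notion of $2$-cell in $\skt$ being just natural transformations). A morphism in $[\cI,\cat](W,\skt(\sS,D-))$ between $\Phi_s(F)$ and $\Phi_s(G)$ is then uniquely determined by $\eta$ (as each $p_w^i\circ\eta$ is automatically a natural transformation of sketch morphisms), so the compatibility with $U\circ-$ is automatic. This promotes the bijection on objects to an isomorphism of hom-categories, making the square \eqref{diag:pull-max-sk} a pullback in $\cat$. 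I expect no real obstacle here; the only subtlety is keeping track that the well-definedness of $\Phi_s$ is guaranteed precisely because the maximal structure makes each $p_w^i$ a sketch morphism, which is ensured by \Cref{prop:min-max}(2).
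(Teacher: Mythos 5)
Your proof is correct and follows essentially the same route as the paper: both arguments reduce the pullback property to the fact that, for the maximal structure $(\skL^M_{\mathfrak{L}},\skC^M_{\mathfrak{L}})$, a functor $F\colon\cS\to\cC$ is a sketch morphism if and only if each $p_w^i\circ F$ is (the second part of \Cref{prop:min-max}(2)). The only cosmetic difference is that you compute the pullback of categories explicitly as pairs $(F,\alpha)$ and cite \Cref{prop:min-max} directly, whereas the paper verifies the universal property against an arbitrary test category and re-derives that implication inline.
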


\begin{proof}
    Let us consider a category $\cX$ together with two functors $Q$ and $P$ making the diagram below commutative. 
    \begin{equation}
    \label{diag:proof-pullback}
\begin{tikzcd}[ampersand replacement=\&]
	\cX \\
	\&\& {\cat(\cS,\cC)} \\
	\& {[\cI,\cat](\,W,\skt(\sS,D-)\,)} \& {[\cI,\cat](\,W,\cat(\cS,U_sD-)\,)}
	\arrow["Q", curve={height=-12pt}, from=1-1, to=2-3]
	\arrow["{U_s\circ-}"', from=3-2, to=3-3]
	\arrow["\Phi", from=2-3, to=3-3]
	\arrow["P"', curve={height=12pt}, from=1-1, to=3-2]
\end{tikzcd}
    \end{equation}
    The commutativity of this square exactly means that, for any $x\in\cX$ the bottom left diagram commutes and so for any $i\in I$ and any $w\in W(i)$ the bottom right diagram commutes. 
    \begin{center}
\begin{tikzcd}[ampersand replacement=\&]
	W \\
	{\skt(\sS,D-)} \& {\cat(\cS,U_sD-)}
	\arrow["{\Phi (Qx)}", from=1-1, to=2-2]
	\arrow["Px"', from=1-1, to=2-1]
	\arrow["{U_s}"', from=2-1, to=2-2]
\end{tikzcd}
\hspace{0.5cm}\hspace{0.5cm}
\begin{tikzcd}[ampersand replacement=\&]
	\cS \& \cC \\
	\& {\cD_i}
	\arrow["{U_s(\,(Px)_i(w)\,)}"', from=1-1, to=2-2]
	\arrow["Qx", from=1-1, to=1-2]
	\arrow["{p_w^i}", from=1-2, to=2-2]
\end{tikzcd}
    \end{center}
    Now, we want to show that actually all of $Qx\colon \cS\to\cC$ are maps of sketches, i.e. for any $\sigma\in\skL_\sS/\skC_\sS$ there exists a $\tau\in\skL^M_{\mathfrak{L}}/\skC^M_{\mathfrak{L}}$ such that $Qx(\sigma)\cong\tau$. Actually, we can even show that $Qx(\sigma)$ itself is in $\skL^M_{\mathfrak{L}}/\skC^M_{\mathfrak{L}}$\footnote{This makes sense because we took the maximal sketch structures.}. In fact, for any $i\in \cI$ and $w\in W(i)$, since $(Px)_i(w)\in\skt(\sS,\sD_i)$, then there exists a $\delta\in\skL_i/\skC_i$ such that $[(Px)_i(w)](\sigma)\cong\delta$. Thus, 
    $$p_w^i(\,Qx(\sigma)\,)=[(Px)_i(w)](\sigma)\cong\delta$$
    and so $Qx(\sigma)\in\skL^M_{\mathfrak{L}}/\skC^M_{\mathfrak{L}}$ by definition. This shows that $Q$ factorise through $\skt(\sS,\cC)$ as shown below (since transformations of sketches are the same as natural transformations). 
\[\begin{tikzcd}[ampersand replacement=\&]
	\cX \\
	\& {\skt(\sS,\sC)} \& {\cat(\cS,\cC)} \\
	\& {[I,\cat](\,W,\skt(\sS,D-)\,)} \& {[I,\cat](\,W,\cat(\cS,U_sD-)\,)}
	\arrow["Q", curve={height=-12pt}, from=1-1, to=2-3]
	\arrow["{U_s\circ-}"', from=3-2, to=3-3]
	\arrow["\Phi", from=2-3, to=3-3]
	\arrow["P"', curve={height=12pt}, from=1-1, to=3-2]
	\arrow["{\exists!Q'}"{description}, dashed, from=1-1, to=2-2]
	\arrow["{U_s}", from=2-2, to=2-3]
	\arrow["{\Phi_s}"', from=2-2, to=3-2]
\end{tikzcd}\]
One can check that $\Phi_s Q'=P$ using the definitions and the commutativity of \eqref{diag:proof-pullback}. The uniqueness of $Q'$ follows from the fact that $U_s$ is an inclusion. 
\end{proof}

We remind that in the following proposition we follow \Cref{notat:weight-co/proj}.

\begin{prop}
\label{prop:almost-w-colim}
    Let $W\colon \cI\to\cat$ be a weight, $D\colon \cI\to\skt$ a diagram of sketches and $\cC\in\cat$ a vertex of a $W$-weighted $U_sD$-cocone. For any $\sS\in\skt$, the sketch $(\cC,\skL^m_\mathfrak{C},\skC^m_\mathfrak{C})$ makes the diagram below a pullback in $\cat$. \footnote{The functor $\Psi_s$ is well-defined because the sketch structure of $\sC$ makes all of the weighted-coprojections morphisms of sketches.}
    \begin{equation}
        \label{diag:pull-min-sk}
\begin{tikzcd}[ampersand replacement=\&]
	{\skt(\sC,\sS)} \& {\cat(\cC,\cS)} \\
	{[I,\cat](\,W,\skt(D-,\sS)\,)} \& {[I,\cat](\,W,\cat(U_sD-,\cS)\,)}
	\arrow["{U_s}", from=1-1, to=1-2]
	\arrow["{U_s\circ-}"', from=2-1, to=2-2]
	\arrow["\Psi", from=1-2, to=2-2]
	\arrow["{\Psi_s}"', from=1-1, to=2-1]
\end{tikzcd}
\end{equation}
\end{prop}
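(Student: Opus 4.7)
The plan is to mirror the proof of Proposition \ref{prop:almost-w-lim}, reversing the direction of all arrows and replacing the maximal sketch structure by the minimal one. By \Cref{prop:min-max}(1), applied to the family $\mathfrak{C} = \{q_w^i\colon \cD_i \to \cC\}$, the sketch $(\cC, \skL^m_\mathfrak{C}, \skC^m_\mathfrak{C})$ is characterised universally by the property that, for any sketch $\sY$, a functor $G\colon \cC \to \cY$ is a morphism of sketches if and only if every composite $G \circ q_w^i\colon D(i) \to \sY$ is. This is the only property of the minimal structure we shall need.

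Given a test category $\cX$ together with functors $Q\colon \cX \to \cat(\cC,\cS)$ and $P\colon \cX \to [\cI,\cat](W,\skt(D-,\sS))$ making the square \eqref{diag:pull-min-sk} commute, unwinding this commutativity at a fixed $x \in \cX$, $i \in \cI$ and $w \in W(i)$ yields the equality
\[ Qx \circ q_w^i \;=\; U_s\bigl((Px)_i(w)\bigr)\colon \cD_i \longrightarrow \cS. \]
Since $(Px)_i(w)\colon D(i) \to \sS$ is by hypothesis a morphism of sketches, the composite $Qx \circ q_w^i$ is itself a sketch morphism for every $i$ and $w$. The universal property of the minimal structure then forces $Qx\colon \sC \to \sS$ to be a sketch morphism, so that $Q$ lifts uniquely to a functor $Q'\colon \cX \to \skt(\sC,\sS)$. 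The identity $\Psi_s Q' = P$ then holds because $U_s$ is faithful and the original square commutes, while uniqueness of $Q'$ is immediate from $U_s$ being an inclusion on hom-sets.

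The argument is, if anything, slightly more direct than that of Proposition \ref{prop:almost-w-lim}: the minimal structure \emph{literally} contains the cones and cocones $q_w^i \pi$, rather than merely things isomorphic to them as in the maximal case, so no ``up to isomorphism'' bookkeeping is needed. The only conceptual point worth tracking is the variance, namely that the coprojections $q_w^i$ sit on the \emph{source} of the hom-categories appearing in diagram \eqref{diag:pull-min-sk}, which is precisely why the \emph{minimal}, rather than the maximal, structure is the correct one to impose on $\cC$ here.
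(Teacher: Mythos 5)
Your proof is correct and follows essentially the same route as the paper's: both reduce the pullback property to showing that each $Qx$ preserves the specifications of the minimal structure, which by construction are exactly the images $q_w^i(\tau)$, and both conclude via $Qx\circ q_w^i = U_s\bigl((Px)_i(w)\bigr)$. The only cosmetic difference is that you invoke the ``moreover'' universal property of \Cref{prop:min-max}(1) wholesale, whereas the paper unfolds the definition of $\skL^m_{\mathfrak{C}}/\skC^m_{\mathfrak{C}}$ and redoes that computation in place (where the up-to-isomorphism step $(Px)_i(w)(\tau)\cong\delta$ does still appear, contrary to your closing remark, though it is harmlessly absorbed into the cited universal property in your version).
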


\begin{proof}
Let us consider a category $\cX$ together with functors $Q$ and $P$ as below, making the outer diagram below commutative. 
\[\begin{tikzcd}[ampersand replacement=\&]
	\cX \\
	\& {\skt(\sC,\sS)} \& {\cat(\cC,\cS)} \\
	\& {[\cI,\cat](\,W,\skt(D-,\sS)\,)} \& {[\cI,\cat](\,W,\cat(U_sD-,\cS)\,)}
	\arrow["{U_s}", from=2-2, to=2-3]
	\arrow["{U_s\circ-}"', from=3-2, to=3-3]
	\arrow["\Psi", from=2-3, to=3-3]
	\arrow["{\Psi_s}"', from=2-2, to=3-2]
	\arrow["P"', curve={height=12pt}, from=1-1, to=3-2]
	\arrow["Q", curve={height=-12pt}, from=1-1, to=2-3]
	\arrow["{\exists! Q'}"{description}, dashed, from=1-1, to=2-2]
\end{tikzcd}\]
Similarly to \Cref{prop:almost-w-lim} it is enought to show that, for any $x\in\cX$, the functors $Qx\colon \cC\to\cS$ are actually morphisms of sketches from $\sC$ to $\sS$. Thus, let us consider a $\sigma\in\skL_\mathfrak{C}^m/\skC_\mathfrak{C}^m$, i.e. by definition $\sigma=q_w^i(\tau)$ for some $i\in \cI,w\in W(i)$ and $\tau\in\skL_i/\skC_i$\footnote{Where $Di=(\cD_i,\skL_i,\skC_i)$. }. Therefore, 
$$Qx(\sigma)=(Qx)q_w^i(\tau)=\Psi Qx(\tau)=[U_s(Px)_i(w)](\tau),$$
where the second equality holds by definition of $\Psi$ and the third by commutative of the outer diagram above. Since $(Px)_i(w)\in\skt(Di,\sS)$, there exists a $\delta\in\skL_\sS/\skC_\sS$ such that $(Px)_i(w)(\tau)\cong\delta$. Finally we can conclude $Qx(\sigma)\cong\delta$ as required. 
\end{proof}

\begin{rem}
    It is worth underlining that, since there are no size issues, all of these results hold also for $\Skt$ and $\Cat$.  
\end{rem}


\subsection{Weighted bi-co/limits of sketches}
\label{sec:w-p-co/lim}
The literature contains already some results regarding limits/colimits in $\skt$.  
In \cite{lair1975etude} it is shown that the 1-category $\skt_s$ of sketches and strict morphisms (not up-to-iso) is locally finitely presentable, hence has all (1-d) limits and colimits. 
 Moreover, it is also already known that the 2-category $\skt$ has all weighted bilimits \cite[Chapter~5.1, Proposition 5.1.4]{Makkaipare}. In this section we show that $\skt/\Skt$  have weighted pseudo-co/limits (hence also all weighted bi-co/limits). We will also show that $\skt/\Skt$ have flexible co/limits (see \Cref{skt-has-flex-co/lim}). 

\begin{thm}[\textit{Existence and construction of weighted pseudo-limits of sketches}]
\label{thm:skt-has-p-lim}
\hspace{0cm}\\
 Let $W\colon \cI\to\cat$ be a weight and $D\colon \cI\to\skt$ a diagram. The $W$-weighted $D$-pseudo-limit $lim_W^pD$ in $\skt$ exists and, moreover, it is given by the sketch 
 $$\text{lim}_W^pD=(\text{lim}_W^pU_sD,\skL_p,\skC_p)$$
 where $\text{lim}_W^pU_sD$ is the $W$-weighted $U_sD$-pseudo-limit in $\cat$ and $(\skL_p,\skC_p)$ are the maximal classes making all the projections of the pseudo-limit $\text{lim}_W^pU_sD$ morphisms of sketches. 
\end{thm}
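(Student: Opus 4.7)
The plan is to take the pseudo-limit in $\cat$ and equip it with the maximal sketch structure making all weighted-projections into sketch morphisms; the universal property in $\skt$ then follows essentially formally by combining the $\cat$-level pseudo-universal property with the pullback square of Proposition~\ref{prop:almost-w-lim}.

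First, I would construct the candidate. Let $L := \text{lim}_W^p U_sD$ be the $W$-weighted pseudo-limit in $\cat$ (which exists since $\cat$ has all weighted pseudo-limits), with projections $p_w^i \colon L \to \cD_i$ and coherent structure 2-isomorphisms. Writing $\mathfrak{L}=\{p_w^i\}$ for the family of projections, I would apply Proposition~\ref{prop:min-max}(2) to equip $L$ with the maximal sketch structure $(\skL^M_\mathfrak{L}, \skC^M_\mathfrak{L})$ making every $p_w^i$ a sketch morphism; call the resulting sketch $\sL$. The projections together with the structural 2-cells of the pseudo-cone in $\cat$ automatically form a $W$-weighted $D$-pseudo-cone in $\skt$ with vertex $\sL$, because the projections are sketch morphisms by construction and a 2-cell in $\skt$ is nothing more than a natural transformation of the underlying functors.

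Second, I would verify the pseudo-universal property. For any $\sS \in \skt$, Proposition~\ref{prop:almost-w-lim} furnishes a pullback square in $\cat$:
\[
\begin{tikzcd}[ampersand replacement=\&]
\skt(\sS,\sL) \ar[r, "U_s"] \ar[d, "\Phi_s"'] \& \cat(\cS,L) \ar[d, "\Phi"] \\
{[\cI,\cat]}(W, \skt(\sS, D{-})) \ar[r, "U\circ-"'] \& {[\cI,\cat]}(W, \cat(\cS, U_sD{-}))
\end{tikzcd}
\]
The right vertical $\Phi$ is an equivalence, because $L$ is the $W$-weighted pseudo-limit in $\cat$. Both horizontal arrows are fully faithful inclusions of full subcategories (sketch morphisms and their transformations form full subcategories of the corresponding hom-categories in $\cat$). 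A short diagram chase in this strict pullback yields that $\Phi_s$ is an equivalence: fully faithfulness transfers directly; for essential surjectivity, given $P$, essential surjectivity of $\Phi$ produces $Q\colon\cS\to L$ with $\Phi(Q)\cong U\circ P$, and since each component $p_w^i\circ Q$ is then isomorphic to a sketch morphism the functor $Q$ is automatically a sketch morphism (this is precisely the point of the maximal structure), from which the pullback yields the desired lift $F\in\skt(\sS,\sL)$ with $\Phi_s(F)\cong P$.

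The argument contains no genuinely hard step; the main subtlety is purely organisational, namely checking that (i) the $\cat$-level pseudo-cone 2-cells lift to $\skt$ without further coherence conditions, and (ii) the equivalence $\Phi$ interacts well with the two fully faithful inclusions in the pullback. Both are almost automatic once one observes the fundamental point: the maximal sketch structure on $L$ is tailor-made so that a functor into $\sL$ is a sketch morphism precisely when all its composites with the projections are. This turns the universal property in $\cat$ into the universal property in $\skt$ with essentially no extra work.
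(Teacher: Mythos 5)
Your construction is exactly the paper's: take the pseudo-limit in $\cat$, give it the maximal sketch structure of \Cref{prop:min-max}(2), and conclude via the pullback square of \Cref{prop:almost-w-lim}. The one place you diverge is the final step, and it matters. The universal property of a \emph{pseudo}-limit is that $\Phi$ is an \emph{isomorphism} of categories, not merely an equivalence; the paper's proof uses this and the fact that isomorphisms are stable under pullback to conclude in one line that $\Phi_s$ is an isomorphism, i.e.\ that $\sL$ really is the pseudo-limit. By downgrading $\Phi$ to an equivalence and running a fully-faithful-plus-essentially-surjective chase, you only establish that $\Phi_s$ is an equivalence, which proves $\sL$ is a bi-limit rather than the pseudo-limit claimed in the statement. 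Your chase itself is sound, but only because the horizontal legs of the square are \emph{replete} full inclusions (a functor isomorphic to a sketch morphism is one, since the cone conditions are iso-invariant) --- equivalences are not stable under strict pullback in general, so the phrase ``interacts well with the two fully faithful inclusions'' is doing real work that deserves to be said out loud if you keep this route. The fix is trivial: replace ``equivalence'' by ``isomorphism'' throughout and the whole second half of your argument collapses to the paper's one-line conclusion.
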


\begin{proof}
This follows immediately setting $\cC:=\text{lim}_W^pU_sD$ in \Cref{prop:almost-w-lim}. In fact, being a $W$-weighted $U_sD$-pseudo-limit means exactly that the functor $\Phi$ in diagram \eqref{diag:pull-max-sk} is an isomorphisms. Thus, since isomorphisms are stable under pullback, also $\Phi_s$ has to be an isomorphisms, which concludes that  $\sC$ is a $W$-weighted $D$-pseudo-limit.
\end{proof}

\begin{rem}
Since a 2-category with all pseudo-limits also has all bi-limits (see for instance \cite[Section~6.12]{Lack2010}), \Cref{thm:skt-has-p-lim} implies that $\skt$ has all weighted bi-limits as well. 
\end{rem}

The following proposition gives a formula to calculate weighted colimits in $\skt$ using weighted colimits in $\cat$, which is the analogous to \Cref{thm:skt-has-p-lim} for colimits. 

\begin{thm}[\textit{Existence and construction of weighted pseudo-colimits of sketches}] \label{thm:skt-has-p-colim}
\hspace{0cm}\\
Let $W\colon \cI\to\cat$ be a weight and $D\colon \cI\to\skt$ a diagram.  The $W$-weighted $D$-pseudo-colimit in $\skt$ is the sketch $(\text{colim}_W^pU_sD,\skL^p,\skC^p)$ where $\text{colim}_W^pU_sD$ is the $W$-weighted $U_sD$-pseudo-colimit in $\cat$, $(\skL^p,\skC^p)$ is the smallest sketch structure making all the (weighted) inclusions maps of sketches. 
\end{thm}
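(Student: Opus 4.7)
The plan is to mirror the argument used for pseudo-limits in \Cref{thm:skt-has-p-lim}, but now invoking \Cref{prop:almost-w-colim} in place of \Cref{prop:almost-w-lim}. First I would set $\cC := \text{colim}_W^p U_s D$, the underlying weighted pseudo-colimit in $\cat$, and consider its weighted cocone into $U_sD$ with coprojections $q_w^i\colon \cD_i \to \cC$, which by construction form the family $\mathfrak{C}$ from \Cref{notat:weight-co/proj}. Equipping $\cC$ with the minimal sketch structure $\skL^p := \skL^m_\mathfrak{C}$ and $\skC^p := \skC^m_\mathfrak{C}$ provided by \Cref{prop:min-max}(1) makes every coprojection $q_w^i$ a sketch morphism, and by the universal property in part (1) of that proposition this is precisely the smallest such structure.

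Next, I would verify the universal property directly from \Cref{prop:almost-w-colim}. Applied to $\cC$ with its minimal sketch structure, that proposition produces a pullback square in $\cat$ of the form \eqref{diag:pull-min-sk}. Because $\cC$ is the weighted pseudo-colimit in $\cat$, the right-hand comparison $\Psi\colon \cat(\cC,\cS) \to [\cI,\cat](W,\cat(U_sD-,\cS))$ is an isomorphism for every target category $\cS$. Since isomorphisms are stable under pullback, the induced left-hand comparison
\[
\Psi_s\colon \skt(\sC,\sS) \longrightarrow [\cI,\cat]\bigl(W,\skt(D-,\sS)\bigr)
\]
is then also an isomorphism for every sketch $\sS$, which is exactly the statement that $\sC = (\cC,\skL^p,\skC^p)$ represents the $W$-weighted pseudo-colimit of $D$ in $\skt$.

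The only potentially subtle point is checking that $\Psi_s$ is genuinely well-defined, i.e.\ that the cocone components $q_w^i$, although defined in $\cat$, yield an honest weighted cocone of sketches $W \to \skt(D-,\sC)$; but this is automatic once one observes that each $q_w^i$ is a sketch morphism by the very definition of the minimal structure, and that 2-cells of sketches are just natural transformations. The same argument works verbatim in the large setting, replacing $\skt$ and $\cat$ by $\Skt$ and $\Cat$ respectively, since \Cref{prop:min-max} and \Cref{prop:almost-w-colim} hold in both settings. Consequently both $\skt$ and $\Skt$ admit all weighted pseudo-colimits, and hence (by the standard reduction, cf.\ \cite[Section~6.12]{Lack2010}) all weighted bicolimits as well.
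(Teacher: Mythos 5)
Your proof is correct and follows exactly the paper's intended argument: the paper's own proof of this theorem simply says it is analogous to \Cref{thm:skt-has-p-lim}, i.e.\ set $\cC:=\text{colim}_W^pU_sD$, invoke \Cref{prop:almost-w-colim}, and use stability of isomorphisms under pullback to transfer the universal property from $\Psi$ to $\Psi_s$. The additional checks you include (that the coprojections are sketch morphisms by minimality, and that the argument carries over verbatim to $\Skt$/$\Cat$) match the surrounding remarks in the paper.
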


\begin{proof}
    The proof is analogous to the one of \Cref{thm:skt-has-p-lim} using \Cref{prop:almost-w-colim}.
\end{proof}

\begin{cor}
    The formulas in Theorems~\ref{thm:skt-has-p-lim} and \ref{thm:skt-has-p-colim}, applied to locally small sketches, give the constructions of weighted pseudo-limits and colimits in the 2-category $\Skt$ as well. 
\end{cor}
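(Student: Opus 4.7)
The plan is to mimic the proofs of Theorems \ref{thm:skt-has-p-lim} and \ref{thm:skt-has-p-colim} verbatim, but working in the $2$-category $\Cat$ of locally small categories in place of $\cat$. The key observation, already recorded in the remark closing Section \ref{sec:top-beh}, is that Propositions \ref{prop:almost-w-lim} and \ref{prop:almost-w-colim} hold equally well with $\skt$ replaced by $\Skt$ and $\cat$ by $\Cat$: their proofs never exploit smallness of the underlying categories and use only the universal pullback squares of hom-categories together with the manipulations of cones and cocones supplied by Proposition \ref{prop:min-max}.

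Concretely, given a weight $W\colon\cI\to\cat$ and a diagram $D\colon\cI\to\Skt$, I would first form the $W$-weighted pseudo-limit of the underlying $\Cat$-valued diagram and equip this vertex with the maximal sketch structure making every weighted projection a morphism of sketches. The large-scale analogue of Proposition \ref{prop:almost-w-lim} then supplies a pullback square of hom-categories in $\Cat$ in which the right-hand vertical map is an isomorphism by the defining property of the pseudo-limit; pullback-stability of isomorphisms transfers this to the left-hand vertical map, which is precisely the universal property of a $W$-weighted pseudo-limit in $\Skt$. For pseudo-colimits the argument is strictly dual: form the pseudo-colimit in $\Cat$, equip it with the minimal sketch structure induced by the coprojections, and conclude via the large-scale analogue of Proposition \ref{prop:almost-w-colim}.

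The only point that requires a moment of reflection, and hence the main obstacle, is that $\Cat$ itself admits all small-indexed weighted pseudo-co/limits while remaining within the locally small world. For pseudo-limits this is classical and immediate: such a vertex embeds into a product of hom-categories and inherits local smallness whenever $\cI$ is small. For pseudo-colimits, the standard construction via pseudo-coequalisers of coproducts, or equivalently via flexible colimits as reviewed in \cite{Lack2010}, preserves local smallness under small indexing; alternatively one may appeal directly to the fact that $\Cat$ is cocomplete as a $2$-category in this sense. Once this is granted no further obstacle remains, and the formulas of the two theorems transfer without change to the locally small setting.
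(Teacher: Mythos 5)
Your proposal is correct and follows essentially the same route as the paper: the proof in the text consists precisely of the observation that Propositions \ref{prop:min-max}, \ref{prop:almost-w-lim} and \ref{prop:almost-w-colim} nowhere use smallness of the underlying categories, so the formulas of Theorems \ref{thm:skt-has-p-lim} and \ref{thm:skt-has-p-colim} transfer verbatim to $\Skt$ over $\Cat$. Your additional check that $\Cat$ itself admits small-indexed weighted pseudo-co/limits of locally small categories without leaving the locally small world is a point the paper leaves implicit, and it is a reasonable thing to have verified.
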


\begin{proof}
    It suffices to notice that Propositions~\ref{prop:min-max},~\ref{prop:almost-w-lim} and \ref{prop:almost-w-colim} do not require the categories to be small. 
\end{proof}

\begin{rem}[Co/Limit Sketches are closed under bi-co/limits]\label{rem:co/lim-skt-closed-bi-co/lim}
    It is clear from the constructions given in Theorems~\ref{thm:skt-has-p-lim} and \ref{thm:skt-has-p-colim}, that if we start with co/cones in limit sketches, the resulting pseudo-co/limit sketch is still a limit sketch. 
    In particular, this gives us the formula for such pseudo-co/limits in $\skt_l/\Skt_l$. 
    Analogously, the same formulas give also the weighted pseudo-co/limits in $\skt_c/\Skt_c$ as well. 
    
    In other words, the forgetful functors $U_l$ and $U_c$ creates weigthed pseudo-co/limits. 
\end{rem}

\subsubsection{Flexible co/lmits}

In this section we will use the results in Sections \ref{sec:top-beh} and \ref{sec:w-p-co/lim} to show that $\skt/\Skt$ have all flexible co/limits. 

\begin{cor}\label{skt-has-flex-co/lim}
   The 2-categories $\skt$ and $\Skt$ admit all flexible co/limits.
\end{cor}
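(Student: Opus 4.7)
The plan is to derive the corollary directly from Theorems~\ref{thm:skt-has-p-lim} and \ref{thm:skt-has-p-colim}, which already establish that $\skt$ and $\Skt$ admit all weighted pseudo-co/limits. The only 2-categorical input needed is the general principle that the class of flexible (co)limits is subsumed by the class of pseudo-(co)limits, so the existence of the latter automatically delivers the former.

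There are two equivalent ways to spell this out, either of which I would be happy to use. The first is in terms of weights: a (co)limit is flexible precisely when its weight is cofibrant in the pseudo/flexible model structure on $[\cI,\cat]$, while the pseudo-(co)limit weighted by an arbitrary $W$ is the strict (co)limit weighted by a cofibrant replacement $QW$. When $W$ is already flexible one has $QW\simeq W$, so the flexible $W$-weighted (co)limit coincides with the pseudo $W$-weighted (co)limit (cf.\ \cite[Section~4]{Lack2010}). The second, more concrete route uses the Bird--Kelly--Power--Street style characterisation: a 2-category admits all flexible limits if and only if it has products, inserters, equifiers and splittings of idempotent 2-cells (dually for colimits); each of these basic constructions is a particular weighted pseudo-(co)limit, hence granted by Theorems~\ref{thm:skt-has-p-lim} and \ref{thm:skt-has-p-colim}.

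I do not foresee a genuine obstacle: the argument is purely formal and both variances are handled symmetrically by the two theorems we are building on. The only real care is in citing the correct 2-categorical source for the reduction from flexible to pseudo, and in pointing out that the formulas in Theorems~\ref{thm:skt-has-p-lim} and \ref{thm:skt-has-p-colim} restrict to give the basic flexible (co)limits explicitly if one prefers a more constructive presentation.
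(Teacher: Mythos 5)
There is a genuine gap, and it sits exactly where your argument claims to be ``purely formal'': flexible (co)limits are \emph{not} subsumed by pseudo-(co)limits. The correct containments run the other way: pseudo limits are PIE limits, PIE limits are flexible, but the class of flexible weights is strictly larger than what pseudo limits generate. Concretely, the splitting of idempotent (equivalence)s is a flexible weight that is not PIE, hence not constructible from weighted pseudo-limits; this is precisely the content of \cite[Proposition~4.8]{BKPS:flex-lim}, which characterises flexible limits as those generated by pseudo limits \emph{together with} splittings of idempotent equivalences. Your second route makes the error explicit: you list ``splittings of idempotent 2-cells'' among the basic flexible constructions and then assert that each of them ``is a particular weighted pseudo-(co)limit''---that is false for the splittings. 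Your first route fails for the same reason in model-categorical disguise: for a flexible weight $W$ one only gets that $W$ is a \emph{retract} of $QW$ (the counit $q\colon QW\to W$ is split by a section $s$ with $sq\cong 1$), so $\{W,D\}$ is the splitting of the idempotent equivalence $\{sq,D\}$ on the pseudo limit $\{QW,D\}$; such splittings need not exist in a 2-category that merely has pseudo limits, so ``$QW\simeq W$ hence the two limits coincide'' does not go through.

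The paper's proof therefore has to do genuinely non-formal work that your proposal omits: given an idempotent equivalence $E\colon\sS\to\sS$ in $\skt/\Skt$, it splits the underlying idempotent $E\colon\cS\to\cS$ in $\cat/\Cat$ as $E=IR$ with $RI=1_\cX$, equips $\cX$ with the \emph{minimal} sketch structure $(\skL^m_R,\skC^m_R)$ making $R$ a sketch morphism (\Cref{prop:min-max}), and then uses the universal property of that minimal structure to deduce that $I$ is a sketch morphism because $IR=E$ is. To repair your argument you must add this step (or an equivalent construction of splittings of idempotent equivalences in $\skt/\Skt$); the reduction to Theorems~\ref{thm:skt-has-p-lim} and \ref{thm:skt-has-p-colim} alone does not suffice.
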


\begin{proof}
    Thanks to \cite[Proposition~4.8]{BKPS:flex-lim} to show the existence of flexible limits, it suffices to show that we have (weighted) pseudo limits and splitting of idempotent equivalences.
    If we show the dual hypothesis of \cite[Proposition~4.8]{BKPS:flex-lim}, then we would get also the existence of flexible colimits. 
    In particular, these would be the existence of (weighted) pseudo colimits and the dual of splitting of idempotent equivalences. 
    Luckily, the dual of the existence of splitting of idempotent equivalences is still itself. 
    Therefore, since we have the existence of both (weighted) pseudo limits (see \Cref{thm:skt-has-p-lim}) and colimits (see \Cref{thm:skt-has-p-colim}), we only have to show that idempotent equivalences in $\skt/\Skt$ admit a splitting. 

    Let $E\colon\sS\to\sS$ be an idempotent equivalence in $\skt/\Skt$, i.e. a morphism of sketches $E$ such that $EE=E$ and there exists an invertible 2-cell $\theta\colon E\to 1_\sS$ such that $\theta E=1_E=E\theta$. 
    Since $\cat/\Cat$ has all strict limits, in particular we can consider the splitting of $E\colon\cS\to\cS$ in $\cat/\Cat$ below, i.e. two functors $R\colon\cS\to\cX$ and $I\colon\cX\to\cS$ such that $E=IR$ and $RI=1_\cX$. 
\[\begin{tikzcd}[ampersand replacement=\&]
	\cS \&\& \cS \\
	\& \cX
	\arrow["E", from=1-1, to=1-3]
	\arrow["R"', from=1-1, to=2-2]
	\arrow["I"', from=2-2, to=1-3]
\end{tikzcd}\]
Now we need to endow $\cX$ with a sketch structure making both $R$ and $I$ sketch morphisms. The required structure is the minimal sketch structure making $R$ a sketch morphism as defined in \Cref{prop:min-max}.
$$\sX:=(\cX,\skL^m_R,\skC^m_R)$$
By the second part of point (1) of \Cref{prop:min-max}, $I\colon\cX\to\cS$ is a sketch morphism if and only if $IR$ is such. But $IR=E$, thus $I$ is also a sketch morphism and $(\sX,R,I)$ is a splitting of the idempotent equivalence $E$ in $\skt/\Skt$.
\end{proof}

\subsubsection{The case of pseudo-powers}\label{sec:powers}
Let us give a concrete presentation of weighted pseudo-limits in some special cases, this descriptions will come handy later in the paper when we describe the sketch structure of exponential sketches. Let $\cI$ be a (small) category, and let $\sS$ be a sketch. Then, recall that the theory developed in the previous subsection tells us that $\sS^\cI$ is equipped with the maximal sketch structure making all the projections, morphisms of sketches \[\sS^\cI \to \sS.\] 





\subsection{Formal aspects of sketches}
\subsubsection{Duality involution in the $2$-category of sketches} \label{dualityinvolution}

Now, we introduce a duality notion for sketches. This will be useful in \Cref{sec:left-skt-class}, where we will use it to describe the underlying category of the left sketch classifier (see \Cref{turnaroundsketch}). 

\begin{defn}[Opposite sketch]
    Given a sketch $\sS$, we define its opposite sketches $\sS^\circ$ as follows. The underlying category of $\sS^\circ$ is simply $\cS^\op$, and the sketch structure inverts the cone specifications with the colimit ones.
\end{defn}

\begin{rem}
It is easy to see that this construction provides a duality involution on the $2$-category of sketches in the sense of Shulman, see \cite{shulman2016contravariance},

\[(-)^\circ \colon \Skt^\mathsf{co} \to \Skt.\]
\end{rem}

\begin{rem}
It follows directly from the definition that the duality involution interacts with the forgetful functors towards limit and colimit sketches as displayed in the equations below.

\[U^l(\sS^\circ) = (U^c \sS)^\circ \quad \quad U^c(\sS^\circ) = (U^l \sS)^\circ\]
\end{rem}

\subsubsection{On the behavior of Kan extensions in $\Skt$} \label{nicekankan}

The following result about Kan extensions will be useful in \Cref{sec:left-sketches}, in particular in \Cref{prop:left-skt-then-kan-inj}, where we compare our notion of left sketch (see \Cref{def:left-sketch}) with a more classical cocompleteness condition. 

\begin{prop}
    Let $F\colon\sC\to\sS$ and $G\colon\sC\to\sS$ be morphisms of sketches. Then, if the dashed Kan extension below right exists and is a morphism of sketches, then it can be chosen as a Kan extension in $\Skt$.
\[\begin{tikzcd}[ampersand replacement=\&]
	\sC \& \sS \& U\sC \& U\sS \\
	\sD \&\& U\sD
	\arrow["F", from=1-1, to=1-2]
	\arrow["G"', from=1-1, to=2-1]
	\arrow["UF", from=1-3, to=1-4]
	\arrow["UG"', from=1-3, to=2-3]
	\arrow["{\text{lan}_{UG}UF}"', dashed, from=2-3, to=1-4]
	\arrow[dotted, from=2-1, to=1-2]
\end{tikzcd}\]
\end{prop}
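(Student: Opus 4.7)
The plan is to observe that $2$-cells in $\Skt$ between two sketch morphisms are literally natural transformations between the underlying functors, with no additional condition to check. Consequently, the universal property of a left Kan extension transports along the forgetful $U\colon\Skt\to\Cat$, provided the candidate $1$-cell already lifts to a sketch morphism.

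More concretely, write $L:=\mathrm{lan}_{UG}UF\colon U\sD\to U\sS$ and let $\eta\colon UF\Rightarrow L\circ UG$ be its universal $2$-cell in $\Cat$. By hypothesis $L$ underlies a sketch morphism $\sD\to\sS$, and since $F$, $G$, $L$ are all sketch morphisms, $\eta$ automatically qualifies as a $2$-cell $F\Rightarrow LG$ in $\Skt$. I claim that the pair $(L,\eta)$ is a left Kan extension of $F$ along $G$ in $\Skt$. To verify this, fix a sketch morphism $H\colon\sD\to\sS$ and a $2$-cell $\alpha\colon F\Rightarrow HG$ in $\Skt$. Applying $U$ and invoking the universal property of $L$ in $\Cat$ yields a unique natural transformation $\beta\colon L\Rightarrow UH$ with $(\beta\cdot UG)\circ\eta=U\alpha$; this $\beta$ is a natural transformation between the underlying functors of two sketch morphisms, hence a $2$-cell $L\Rightarrow H$ in $\Skt$. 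The factorisation identity already holds after applying $U$ and so holds in $\Skt$; uniqueness in $\Skt$ follows from uniqueness in $\Cat$ since $U$ is faithful on $2$-cells.

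No step is particularly delicate: the content of the statement is essentially the tautology that a Kan extension is characterised by a universal property quantifying over $1$- and $2$-cells, which $\Skt$ and $\Cat$ agree upon once one restricts attention to $1$-cells that are already sketch morphisms. The only genuine hypothesis is that $L$ itself is a sketch morphism — exactly what cannot be taken for granted in general, and precisely the input that later results (such as \Cref{prop:left-skt-then-kan-inj}) will supply by identifying conditions on $\sC$, $\sD$, and $\sS$ guaranteeing that the pointwise left Kan extension respects the ambient co/cone specifications.
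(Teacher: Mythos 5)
Your proof is correct and is essentially the paper's own argument spelled out in detail: the paper disposes of the statement in one line by noting that $U\colon\Skt\to\Cat$ is locally fully faithful, and your verification — that $2$-cells in $\Skt$ are exactly natural transformations between underlying functors, so the universal property of $\mathrm{lan}_{UG}UF$ transfers once the extension $1$-cell is known to be a sketch morphism — is precisely the unpacking of that fact. No discrepancy to report.
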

\begin{proof}
    This follows directly from the fact that the forgetful functor $U\colon\Skt\to\Cat$ is locally fully faithful.
\end{proof}

\subsubsection{An open question}
In \cite{di2022bi} it is shown that every $2$-category in \Cref{doctrines} is locally finitely bipresentable. It is an interesting question whether the $2$-category $\skt$ is locally finitely bipresentable, since this information would have some quite important consequences on its behavior. There is some evidence (\cite{lair1975etude})  that this result could be true, indeed the $1$-category of sketches is locally presentable. Notice that $\Skt$ is not expected to be locally finitely bipresentable because of some size issues.

\section{Tensor product and exponential sketches} 
\label{sec:richard}
In this section we recall the monoidal structure on the $2$-category of sketches studied by Benson \cite{benson1997multilinearity} and show that is actually closed. Early done by Lair in \cite{lair1975etude} on the ($1$-)category of sketches is nowadays hard to access, yet \cite{wells1994sketches} reports he had proven the ($1$-)category to be monoidal closed.
Our contribution is also to reformulate some parts of this treatment using the \emph{topological} approach of \Cref{sec:top-beh}. 
Some of these ideas were already present in Benson's treatment\footnote{After \cite[Definition~2.12]{benson1997multilinearity} it is mentioned that the structure of the tensor product is indeed the \emph{coarsest} making some maps sketch morphisms.}, but not formalised.    

\begin{constr}[\textit{Exponential sketch}]\label{costr:exp-skt}
    Let $\sD= (\cD, \skL_\sD, \skC_\sD)$ be a small sketch and $\sT= (\cT, \skL_\sT, \skC_\sT)$ be a sketch. We shall now construct the sketch $\sT^\sD$ as follows. We consider the functor \[j\colon  \mathsf{Skt}(\sD, \sT) \hookrightarrow \sT^{\cD},\] where the codomain is just the pseudo-power sketch of $\sT$ along the underlying category of $\sD$ as discussed in \Cref{sec:powers}. Now we apply \Cref{prop:min-max} to equip $\mathsf{Skt}(\sD, \sT)$ with the maximal sketch structure making $j$ a sketch morphism.  
\end{constr}

 The idea is that the specifications in the exponential $\sT^\sD$ consists of family of sketch morphisms $F_i\colon\sD\to\sT$ which point-wise form a cone specification in $\sT$, i.e. for any $d\in\sD$, the $F_i(d)$ form a cone in $\skL_\sT$. For the cocone specification is completely analogous. We underline that the sketch structure of the domain $\sD$ comes into play in the condition that the cone of morphism must be \emph{sketch morphisms} and not just functors.

\begin{constr}[\textit{Tensor product of sketches}]\label{def:benson-tensor}
 Let $\sD$ and  $\sS$ be small sketches, now we will recall the definition of $\sD \boxtimes \sS$ \cite[Definition~2.12]{benson1997multilinearity}. The underlying category of $\sD \boxtimes \sS$ is the product $\cD\times\cS$, which then we equip with the smallest sketch structure which makes the canonical maps $\mathcal{I}$ morphisms of sketches. 
 \begin{center}
     $\mathcal{I}:=I_\cD\cup I_\cS$ with \\
     $I_\cD:=\lbrace i_d\colon\cS\to\cD\times\cS\mid d\in\cD\rbrace$ and
     $I_\cS:=\lbrace i_s\colon\cD\to\cD\times\cS\mid s\in\cS\rbrace$.
 \end{center}
 Hence, with the notation introduced in $\Cref{prop:min-max}$, we have 
 $$\sD\boxtimes\sS=(\cD\times\cS,\skL^m_\mathcal{I},\skC^m_{\mathcal{I}}).$$
\end{constr}

Benson also provides some (commuting) conditions on the sketches $\sD$ and $\sS$ \cite[Definition~2.10]{benson1997multilinearity} in order to achieve a sort of closedness property (for some categories with \emph{enough co/limits}) \cite[Definition2.14 and Theorem~3.1]{benson1997multilinearity}. In the following Theorem, we show that the sketch structure defined in \Cref{costr:exp-skt} provides a refinement of this result, proving that Benson's tensor product $\boxtimes$ is closed. 
\begin{thm}\label{thm:skt-closed-mon}
There is an isomorphism of categories as below,
\[\Skt(\sD \boxtimes \sS, \sT) \cong \Skt(\sS, \sT^\sD ). \]
\end{thm}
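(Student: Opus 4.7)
The plan is to deduce the claimed isomorphism from the classical currying bijection $\Cat(\cD\times\cS,\cT)\cong\Cat(\cS,\cat(\cD,\cT))$, by verifying that under this bijection the sketch-morphism condition on the left corresponds precisely to the sketch-morphism condition on the right. For a functor $F\colon\cD\times\cS\to\cT$ I shall write $\hat F\colon \cS\to\cat(\cD,\cT)$ for its transpose, so that at components $F\circ i_s=\hat F(s)$ and $F\circ i_d=\mathrm{ev}_d\circ\hat F$, where $\mathrm{ev}_d\colon \cat(\cD,\cT)\to\cT$ is evaluation at $d$.

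First I would analyse the left-hand side. By Construction \ref{def:benson-tensor}, $\sD\boxtimes\sS$ carries the \emph{minimal} sketch structure making every $i_d$ and $i_s$ into a sketch morphism. Part (1) of Proposition \ref{prop:min-max} then tells me that $F\colon\sD\boxtimes\sS\to\sT$ is a sketch morphism precisely when the two families $\{F\circ i_d\colon \sS\to\sT\}_{d\in\cD}$ and $\{F\circ i_s\colon \sD\to\sT\}_{s\in\cS}$ both consist of sketch morphisms. Call these conditions (a) and (b) respectively.

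Next I would handle the right-hand side. By Construction \ref{costr:exp-skt}, $\sT^\sD$ is $\Skt(\sD,\sT)$ equipped with the \emph{maximal} sketch structure making the inclusion $j\colon \Skt(\sD,\sT)\hookrightarrow\sT^\cD$ a sketch morphism, and $\sT^\cD$ is the pseudo-power carrying, by Section \ref{sec:powers}, the maximal sketch structure making each $\mathrm{ev}_d\colon \sT^\cD\to\sT$ a sketch morphism. For $\hat F$ to even land in the underlying category $\Skt(\sD,\sT)$ of $\sT^\sD$, each $\hat F(s)=F\circ i_s\colon \sD\to\sT$ must already be a sketch morphism, which is exactly condition (b). Assuming this, two applications of part (2) of Proposition \ref{prop:min-max} yield the chain of equivalences: $\hat F$ is a sketch morphism $\sS\to\sT^\sD$ iff $j\hat F\colon \sS\to\sT^\cD$ is a sketch morphism, iff each composite $\mathrm{ev}_d\circ j\hat F=F\circ i_d\colon \sS\to\sT$ is a sketch morphism, which is condition (a). Thus (a)+(b) characterises sketch morphisms on both sides simultaneously, pinning down the object-level bijection between the two hom-categories.

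For $2$-cells, observe that a morphism in either $\Skt$-hom-category is just a natural transformation between the underlying functors (no preservation condition being imposed on $2$-cells), so the standard naturality of the currying isomorphism in $\Cat$ promotes the object bijection to the required isomorphism of categories. The main obstacle is really only aligning correctly the directions of the \emph{minimal} and \emph{maximal} universal properties supplied by Proposition \ref{prop:min-max} — the minimal one on the tensor side matching the maximal one twice on the exponential side — but once this pairing is set up the verification is essentially bookkeeping.
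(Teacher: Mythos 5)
Your proposal is correct and follows essentially the same route as the paper's proof: both reduce the claim to the currying isomorphism in $\Cat$ and then use the universal properties of the minimal structure on $\sD\boxtimes\sS$ (Proposition \ref{prop:min-max}(1)) and the maximal structures on $\sT^\cD$ and $\sT^\sD$ (Proposition \ref{prop:min-max}(2), applied twice) to match the sketch-morphism conditions on the two sides, which in both write-ups come down to the same pair of conditions on the partial functors $F(d,-)$ and $F(-,s)$. The only cosmetic difference is that the paper compares images of two full inclusions into $\Cat(\cD\times\cS,\cT)$ via $\Cat(\cD,\Cat(\cS,\cT))$, whereas you transpose directly to $\Cat(\cS,\Cat(\cD,\cT))$.
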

\begin{proof}
 Using the canonical sketch morphism $\sT^\sD \to \sT^\cD$, it is clear that we have an inclusion $\Skt(\sS, \sT^\sD ) \hookrightarrow  \Skt(\sS, \sT^\cD )$. Moreover, using the universal property of the pseudo-power $\sS^\cD$, the inclusion $\Skt(\sS,\sT)\hookrightarrow\Cat(\cS,\cT)$ and the cartesian closedness of $\Cat$ we obtain the following composite morphism. 
 \begin{equation*}\label{eq:incl-int-hom}
     \Skt(\sS, \sT^\sD ) \hookrightarrow\Skt(\sS, \sT^\cD )\cong \Cat(\cD, \mathsf{Skt}(\sS,\sT)  )\hookrightarrow\Cat(\cD, \Cat(\cS,\cT)  )\cong\Cat(\cD \times \cS, \cT  )
 \end{equation*}

 Clearly, by definition of $\boxtimes$, we also have $\Skt(\sD\boxtimes\sS,\sT)\hookrightarrow\Cat(\cD\times\cS,\cT)$. Hence, we can prove that the wanted isomorphism holds by checking that the image of these two inclusions are the same.
\[\begin{tikzcd}[ampersand replacement=\&]
	{\Skt(\sS, \sT^\sD )} \& {\Skt(\sD\boxtimes\sS,\sT)} \\
	{\Cat(\cD, \Cat(\cS,\cT)  )} \& {\Cat(\cD \times \cS, \cT  )}
	\arrow["\cong"', from=2-1, to=2-2]
	\arrow["I"', hook', from=1-1, to=2-1]
	\arrow["J", hook, from=1-2, to=2-2]
\end{tikzcd}\]

First, let us notice that a functor $F\colon\cD\to\Cat(\cS,\cT)$ is in the image of $I$ if and only if: \black \footnote{Condition (1) says that $F$ is in the image of the second inclusion of \eqref{eq:incl-int-hom}, while (2) requires it to be in the image of the first one.}
\begin{enumerate}
    \item it factors through $\Skt(\sS,\sT)$, i.e. there exists $F'\colon\cD\to\Skt(\sS,\sT)$ such that $U_sF'\cong F$; 
    \item for any $s\in\cS$, post-composition of $F$ with the evaluation functor $ev_s\colon\Cat(\cS,\cT)\to\cT$ is a sketch morphism $ev_sF$ in $\Skt(\sD,\sT)$, which is the underlying category of the sketch $\sT^\sD$.
\end{enumerate}
\[\begin{tikzcd}[ampersand replacement=\&]
	\cD \&\& {\Cat(\cS,\cT)} \& \cT \\
	\& {\Skt(\sS,\sT)}
	\arrow["F", from=1-1, to=1-3]
	\arrow["{ev_s}", from=1-3, to=1-4]
	\arrow["{U_S}"', hook, from=2-2, to=1-3]
	\arrow["{F'}"', from=1-1, to=2-2]
\end{tikzcd}\]

On the other hand, a morphism $G\colon\cD\times\cS\to\cT$ is a sketch morphism if and only if, for any $j\in\mathcal{I}$ of \Cref{def:benson-tensor}, $Gj$ is such (since $\cD\boxtimes\sS$ has the minimal sketch structure defined in \Cref{prop:min-max}). More precisely we need:
\begin{enumerate}[(i)]
    \item for any $d\in\cD$, $Gi_d=G(d,-)\colon\cS\to\cT$ must be a sketch morphism;
    \item for any $s\in\cS$, $Gi_s=G(-,s)\colon\cD\to\cT$ must be a sketch morphism.
\end{enumerate}

In order to conclude, we notice that condition (1) corresponds to (i) and (2) to (ii) under the isomorphism $\Cat(\cD,\Cat(\cS,\cT))\cong\Cat(\cD\times\cS,\cT)$.

\end{proof}







\section{Left sketches}\label{sec:left-sketches}
In this section, we introduce and study the notion of \textit{left sketch}, which will play a crucial role in our definition of \textit{logos}. A motivating example for the definition of left sketch is the very notion of topos. Throughout the section we will often prove a general result in the context of left sketches and contextualize its meaning in the special case of topoi.

\begin{defn}[\textit{Left Sketch}]\label{def:left-sketch}
A left sketch is a sketch $\sS= (\cS, \skL_\sS, \skC_\sS)$ where: 
\begin{enumerate}
    \item $\cS$ is cocomplete;
    \item $\skC_\sS$ contains all the (essentally small) colimits diagrams.
\end{enumerate}
We denote with $\leftskt$/$\leftSkt$ the subcategories of $\skt$/$\Skt$ with objects left sketches. 
\end{defn}



\begin{rem}[Caveat: Right sketches exist too!] \label{caveatrighutskt}
It comes with no surprise that one can give the definition of \textit{right sketch}, and that every statement dualises very easily. One neat way to organize the discussion is to say that a sketch is \textit{right} if its dual sketch is left. From this point on, we will not insist on the notion of right sketches, as there are many more examples of left sketches in nature, and thus we focus on those. Our treatment though is perfectly dualisable.
\end{rem}

\begin{exa}[A pet example of left sketch] \label{topostoleft}
The inclusion of topoi into sketches discussed in \Cref{firsttimetopos} factors through left sketches. For this section, and for the rest of the paper, topoi will be our pet example of left sketch.

\[\begin{tikzcd}[ampersand replacement=\&]
	{\mathsf{Topoi}^\text{op}} \\
	{\mathsf{Skt}} \& \leftSkt
	\arrow[from=1-1, to=2-1]
	\arrow[from=2-2, to=2-1]
	\arrow[dashed, from=1-1, to=2-2]
\end{tikzcd}\]

Similarly to the case of topoi, frames admit also a canonical structure of left sketch. Thus left sketches offer a framework in which both topoi and locales coexist, without any need of the usual topos-completion of a locale in order to relate these two notions.
\end{exa}

\begin{exa}[$\lambda$-Topoi à la Espíndola as left sketches]\label{kappaleft}
The notion of $\lambda$-topos (where $\lambda$ is a regular cardinal) was introduced by Espíndola in \cite{espindola2020infinitary} to encompass infinitary variations of geometric logic. Not much is known about the $2$-category of $\lambda$-topoi, mostly because a $\lambda$-topos is not \textit{simply} a $\lambda$-lex localization of a presheaf topos  (see example 3.4 in \cite{espindola2023every}). The most updated account on the topic can be found in the very recent \cite{espindola2023every}.

For us $\mathsf{Topoi}_\lambda$ will be the $2$-category whose objects are $\lambda$-topoi in the sense of \cite[Definition~3.1]{espindola2023every}, morphisms are geometric morphisms whose left adjoint preserve $\lambda$-small limits and $2$-cells are natural transformations between left adjoints.

Of course every $\lambda$-topos has a left sketch structure, similar to that of a topos, where in the limit class we have all $\lambda$-small limit cones. Altogether, this information provides us with the diagram below.

\[\begin{tikzcd}[ampersand replacement=\&]
	{\mathsf{Topoi}^{\text{op}}} \& {\mathsf{Topoi}_{\aleph_1}^{\text{op}}} \& {...} \& {\mathsf{Topoi}_{\lambda}^{\text{op}}} \& {...} \\
	\&\& \leftSkt
	\arrow[from=1-4, to=1-3]
	\arrow[from=1-5, to=1-4]
	\arrow[from=1-2, to=1-1]
	\arrow[from=1-3, to=1-2]
	\arrow[""{name=0, anchor=center, inner sep=0}, curve={height=6pt}, from=1-1, to=2-3]
	\arrow[""{name=1, anchor=center, inner sep=0}, from=1-2, to=2-3]
	\arrow[""{name=2, anchor=center, inner sep=0}, from=1-3, to=2-3]
	\arrow[""{name=3, anchor=center, inner sep=0}, from=1-4, to=2-3]
	\arrow[""{name=4, anchor=center, inner sep=0}, curve={height=-6pt}, from=1-5, to=2-3]
	\arrow[shorten <=9pt, shorten >=9pt, Rightarrow, from=0, to=1]
	\arrow[shorten <=9pt, shorten >=9pt, Rightarrow, from=1, to=2]
	\arrow[shorten <=9pt, shorten >=9pt, Rightarrow, from=2, to=3]
	\arrow[shorten <=9pt, shorten >=9pt, Rightarrow, from=3, to=4]
\end{tikzcd}\]

The horizontal $2$-functors are simply acknowledging that when $\kappa \leq \lambda$, a $\lambda$-topos is a $\kappa$-topos. Notice though that these functors are not full on $1$-cells. The vertical arrows into $\leftSkt$ instead are locally fully faithful.
\end{exa}

The terminology \textit{left} is inspired from \cite{isbell1960adequate}, where the author uses the adjective \textit{left} to refer to density and colimit-generation. Indeed, the definition of left sketch consists of a \emph{cocompleteness} condition, so it makes sense to wonder whether it can be expressed through the existence of Kan extensions, or using the terminology introduced in \cite[Definition~1.2]{dlsolo} whether left sketches can be described as Kan injectives in $\Skt$. 

\begin{defn}[$j_\sI\colon \sI \to \sI_!$]
Let $\cI$ be any category, and consider the discrete sketch structure $\sI:= (\cI, \emptyset, \emptyset)$. Consider also $\cI_!$ the category that freely adds a terminal object to $\cI$ and the obvious inclusion \[j_\cI\colon \cI \to \cI_!.\] We equip $\cI_!$ with a sketch structure $\sI_!$, whose limit part is empty and whose colimit part contains the free cocone over $j_\cI$. Of course $j_\cI=:j_\sI\colon \sI \to \sI_!$ is a sketch map.
\end{defn}

\begin{prop}\label{prop:left-skt-then-kan-inj}
A left sketch $\sS= (\cS, \skL_\sS, \skC_\sS)$ is left Kan injective in $\Skt$ with respect to all the maps $j_\sI$ as defined above, for any small category $\cI$. Moreover, 
$$\leftSkt\subseteq\linj(\lbrace j_\sI\mid \cI\;\textrm{small category}\rbrace).$$
\end{prop}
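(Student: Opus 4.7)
The plan is to construct the left Kan extension directly using the cocompleteness of $\cS$ and then invoke \Cref{nicekankan} to upgrade it to a Kan extension in $\Skt$. The strategy is dictated by the fact that the sketch $\sI$ has no specifications at all, and the sketch $\sI_!$ has a single colimit specification designed exactly so that respecting it amounts to computing one colimit in the target.

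First, I would observe that since $\sI = (\cI,\emptyset,\emptyset)$, a sketch morphism $F\colon \sI\to\sS$ is just a functor $F\colon\cI\to\cS$. Because $\cS$ is cocomplete, $F$ admits a colimit cocone $\lambda\colon F\Rightarrow\Delta c$. I would define $\bar F\colon\cI_!\to\cS$ by $\bar F|_\cI := F$, $\bar F(\star) := c$ (where $\star$ is the freely added terminal object), and sending the unique morphism $i\to\star$ to $\lambda_i$. A pointwise computation identifies $\bar F$ with $\text{lan}_{j_\cI}F$ in $\Cat$: for $i\in\cI$ the slice $j_\cI/i$ has terminal object $\mathrm{id}_i$, giving $\bar F(i)=F(i)$, while $j_\cI/\star\cong\cI$ gives $\bar F(\star)=\colim F = c$.

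Next, I would verify that $\bar F\colon\sI_!\to\sS$ is a morphism of sketches. The sketch $\sI_!$ has no cone specifications and exactly one cocone specification, the canonical free cocone $j_\sI\colon\cI\to\cI_!$ with vertex $\star$. Its image under $\bar F$ is precisely the colimit cocone $\lambda$, which is a genuine colimit in $\cS$ and therefore belongs to $\skC_\sS$ by condition (2) of \Cref{def:left-sketch}. Hence $\bar F$ is a sketch morphism. Applying \Cref{nicekankan} (the left Kan extension along $j_\cI$ in $\Cat$ exists and is a sketch morphism) shows that $\bar F$ can be chosen as the left Kan extension in $\Skt$. This exhibits $\sS$ as left Kan injective with respect to every $j_\sI$, which simultaneously establishes the inclusion $\leftSkt\subseteq\linj(\{j_\sI\mid \cI\text{ small}\})$.

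There is no significant obstacle here: the construction is dictated by the shape of $\sI_!$, and everything reduces to the cocompleteness of $\cS$ together with the closedness of $\skC_\sS$ under essentially small colimits. The only conceptual point worth underlining in the write-up is that it is precisely the design of $\sI_!$ (freely adding a terminal object and marking the resulting cocone as a colimit specification) that makes the Kan extension problem equivalent to choosing one honest colimit in the target sketch.
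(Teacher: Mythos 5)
Your construction of the object-level Kan extension is correct and is essentially the paper's own argument made explicit: since $\sI$ carries no specifications, a sketch morphism $\sI\to\sS$ is just a functor, cocompleteness of $\cS$ gives $\text{lan}_{j_\cI}F$ in $\Cat$ with value $\text{colim}\,F$ at the added terminal object, the image of the unique specified cocone of $\sI_!$ is then a genuine colimit cocone and hence lies in $\skC_\sS$ by condition (2) of \Cref{def:left-sketch}, and \Cref{nicekankan} upgrades this to a Kan extension in $\Skt$. Up to this point you and the paper agree.

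The gap is in the final sentence: the object-level injectivity does \emph{not} ``simultaneously establish'' the inclusion $\leftSkt\subseteq\linj(\lbrace j_\sI\rbrace)$. That inclusion is an inclusion of $2$-categories, so besides the objects being left Kan injective you must check that every $1$-cell of $\leftSkt$, i.e.\ a morphism of left sketches $F\colon\sS\to\sT$, is a morphism of left Kan injectives, meaning it preserves the left Kan extensions along all the $j_\sI$, and that the $2$-cells match up. This is precisely what the second half of the paper's proof does: the Kan extensions along $j_\sI$ compute colimits, so preserving them is the same as preserving colimits, which a morphism of left sketches does; and $\Skt$ and $\Cat$ have the same $2$-cells. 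You should add this verification; without it the ``Moreover'' clause of the statement is unproved.
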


\begin{proof}
Let us consider a small categoy $\cI$ and the diagram below, where we allow the abuse of notation of using the same letter for the maps of sketches and their underlying functors. 
\[\begin{tikzcd}[ampersand replacement=\&]
	\sI \& \sS \& \cI \& \cS \\
	{\sI_!} \&\& {\cI_!}
	\arrow["D", from=1-1, to=1-2]
	\arrow["{j_\sI}"', from=1-1, to=2-1]
	\arrow["{j_\sI}"', from=1-3, to=2-3]
	\arrow["{\text{lan}_{j_\sI}f}"', dashed, from=2-3, to=1-4]
	\arrow[dotted, from=2-1, to=1-2]
	\arrow["D", from=1-3, to=1-4]
\end{tikzcd}\]
Since $\cS$ is cocomplete (condition (1) of \Cref{def:left-sketch}) we know that $\text{lan}_{j_\sI}D$ exists in $\Cat$. Then, the requirement that $\text{lan}_{j_\sI}D$ is a morphism of sketches is just a reformulation of the condition (2) of \Cref{def:left-sketch}. It follows from \Cref{nicekankan} that  $\sS$ is left Kan injective with respect to $j_\sI$.

 We need to prove that a morphism of left sketches $F\colon\sS\to\sT$ is a morphisms of left Kan injectives. 
It is enough to notice that the Kan extensions $\text{lan}_{j_\sI}D$ calculate colimits, hence preserving the Kan extensions along $j_\sI$ (i.e. being a morphism of left Kan injectives) is the same as preserving colimits. 
On the other hand, a morphism of left sketches must preserve colimits, hence it is a morphism of left Kan injectives. 

Regarding 2-cells is enough to recall that $\Skt$ and $\Cat$ have the same 2-cells.  
\end{proof}


\begin{rem}
It is natural to wonder whether the other inclusion is also true. While we do not have a precise counterexample, by inspecting the proof of the previous proposition, a positive answer does not seem very likely. Yet, a brighter understanding of the situation would necessitate a broader development of the theory of Kan extensions in the $2$-category of Sketches.  
\end{rem}

\subsection{A parametric take on Morita theory}\label{sec:par-morita-theory}

In \Cref{moritaprimavolta} we have introduced the notion of test Morita equivalence of sketches. The general idea is that being test Morita equivalent means that the two sketches have the same model among test sketches. Now, of course, test sketches are a very interesting and somewhat preferred notion of sketch, especially in the literature, but other classes of sketches may be taken into consideration in place of test. For example, one possible interpretation of the original definition of Morita equivalence of sites means precisely that the two sites have the same model in every topos, so that topoi are taken as a notion of test.

\begin{defn}[$\mathbb{A}$-Morita equivalence]\label{def:strong-morita-equiv} \label{parametric morita}
Let  $\mathbb{A}$ be a full subcategory of the category of $\Skt$. We say that a morphism of sketches $F\colon \sS \to \sT$ is a $\mathbb{A}$-Morita equivalence if, for every $\mathcal{L}$ in $\mathbb{A}$, the induced map between hom-categories is an equivalence of categories,\[F^*\colon  \Skt(\sT, \mathcal{L}) \to \Skt(\sS, \mathcal{L}).\]
When $\mathbb{A}$ is $\leftSkt$, we may refer to this notion as left Morita equivalence, and of course when $\mathbb{A}$ is the class of test sketches, this notion collapses to \Cref{def:morita-equiv} given in \Cref{moritaprimavolta}.
\end{defn}

\begin{rem}\label{rmk:strong-Mor-impl-Mor-eq}
Because every test sketch is a left sketch, it follows on the spot that a left Morita equivalence is a test Morita equivalence.
\end{rem}

\begin{notat}[Morita small = \textit{left} Morita small] \label{safetyMorita}
For the rest of the paper, when we decorate a $2$-category with the apex $\mathsf{M}$ (as in $\mathsf{Skt}^{\mathsf{M}}$, for example), or we refer to Morita small sketches, we mean \textit{left} Morita small. We shall discuss case by case when our results generalize to the case of test Morita small sketches.
\end{notat}

\subsection{Left sketch classifiers}\label{sec:left-skt-class}

As we mentioned in the introduction and later re-discussed in \Cref{diac1}, the starting point of the theory of classifying topoi is, with no doubt, the so-called Diaconescu theorem, that establishes an adjunction between sites and topoi.
This subsection is devoted to constructing a \textit{free} left sketch associated to a (Morita) small sketch. This will provide us with a proto-form of Diaconescu-type theorem (see \Cref{diacweak}), establishing that (Morita small) left sketches are bi-reflective among (Morita small) sketches.
\[\begin{tikzcd}[ampersand replacement=\&]
	{\Skt^{\mathsf{M}}} \& {\leftSkt^{\mathsf{M}}}
	\arrow[""{name=0, anchor=center, inner sep=0}, "{U^{L}}"', shift right, curve={height=6pt}, from=1-2, to=1-1]
	\arrow[""{name=1, anchor=center, inner sep=0}, "{\hat{(-)}}"', shift right, curve={height=6pt}, from=1-1, to=1-2]
	\arrow["\top"{description}, draw=none, from=0, to=1]
\end{tikzcd}\] 
Let us start from providing the construction of the \textit{left sketch classifier} in the special case of a small sketch, and later we will adjust the construction for the Morita small-case.

\begin{constr}[Left sketch classifier]
\label{costr:left-skt-class}
For every small sketch  $\sS= (\cS, \skL_\sS, \skC_\sS)$ we shall now construct a left sketch $\hat{\sS}$ together sketch morphism $J_\sS\colon  \sS \to \hat{\sS}$ which we will show that induces a Morita equivalence. To start, consider the Yoneda embedding \[\yo\colon \cS \to \Psh(\cS).\] Now, consider the class of colimit diagrams $\skC_\sS$ and let $\delta\colon d \Rightarrow \Delta(s)$ be a cocone in $\skC_\sS$. By the universal property of the colimit in $\Psh(\cS)$, we can build a natural transformation $\rho_\delta \colon \text{colim} \yo d \Rightarrow \yo s$. Call $\mathcal{H}$ the set of maps of the form $\rho_\delta$ for $\delta$ in $\skC_\sS$ and define 
\[\hat{\cS} = \mathcal{H}^\perp.\]
This is a reflective subcategory of the presheaf category (by \cite[1.36]{adamek_rosicky_1994}), and thus it is both complete and cocomplete. Moreover, we get a functor $J_\cS\colon\cS\to\hat{\cS}$ as shown in the diagram below by composing $L \circ \yo$.  
\[\begin{tikzcd}[ampersand replacement=\&]
	\cS \& {\Psh(\cS)} \\
	\& {\hat{\cS}}
	\arrow["\yo", from=1-1, to=1-2]
	\arrow[""{name=0, anchor=center, inner sep=0}, "I"', curve={height=12pt}, hook', from=2-2, to=1-2]
	\arrow[""{name=1, anchor=center, inner sep=0}, "L"', curve={height=12pt}, from=1-2, to=2-2]
	\arrow["{J_\cS}"', dashed, from=1-1, to=2-2]
	\arrow["\dashv"{description}, draw=none, from=1, to=0]
\end{tikzcd}\]

We complete this construction equipping $\hat{\cS}$ with a (left) sketch structure.
\begin{itemize}
    \item We define $\skL_{\hat{\sS}}$ as the set of all $J_\cS\sigma$ for any $\sigma\in\skL_{\sS}$.  Using the notation of \Cref{prop:min-max}, $\skL_{\hat{\sS}}:=\skL^m_{J_{\cS}}$.  
    
    \item $\skC_{\hat{\sS}}$ is defined as all the (essentially small) colimits diagrams. 
\end{itemize}

Below we check that this definition makes $J$ a map of sketches $J_\sS:\sS\to\hat{\sS}$. 
\begin{itemize}
    \item For $\sigma\in\skL_{\sS}$, then $J\sigma=L\yo\sigma\in\skL_{\hat{\sS}}$. 
    \item For $\delta\colon d\Rightarrow\Delta(s)$ in $\skC_\sS$, we need to show that $J\delta=L\yo\delta$ is a colimit diagram. By definition, $L$ sends $\rho_\delta:\text{colim}\yo\delta\to \yo s$ to an isomorphism $L\text{colim}\yo\delta\cong L\yo s$. Moreover, since $L$ is a left adjoint $\text{colim}L\yo\delta\cong L\text{colim}\yo\delta$. Hence, $J\delta=L\yo\delta$ is a colimit diagram.
\end{itemize}
We call $\hat{\sS}$ the left sketch classifier of $\sS$. 
\end{constr}

\begin{exa}[Left sketch classifiers vs classifying topoi] \label{leftsketchclasstopos}
Let $(C,J)$ be a site, and consider their canonical sketch structure as discussed in \Cref{sites}. Then, the discussion in \cite[D2.1.4(h)]{Sketches} proves that \Cref{costr:left-skt-class}  produces the topos of sheaves over $(C,J)$.

\[\begin{tikzcd}
	{\mathsf{site}} & {\mathsf{Topoi}^\circ} \\
	\skt & \leftSkt
	\arrow["{\mathsf{Sh}}", from=1-1, to=1-2]
	\arrow["i"', from=1-1, to=2-1]
	\arrow["{(\hat{-})}"', squiggly, from=2-1, to=2-2]
	\arrow[from=1-2, to=2-2]
\end{tikzcd}\]

This motivates our choice of name for the \Cref{costr:left-skt-class}. At this stage, we haven't discussed the functoriality of the left sketch classifier, and we shall postpone such discussion to a later point in the section. 
\end{exa}

The following proposition is an analogue of \cite[Example~D2.1.4~(h)]{Sketches} in the sketch-context, which recovers and generalises a classical behaviour of the topos of sheaves over a site. 

\begin{prop}\label{turnaroundsketch}
Let $\sS$ be a sketch. Then $\hat{\cS}$ coincides with $\mathsf{Skt}((U^c\sS)^\circ,\Set)$.  
\end{prop}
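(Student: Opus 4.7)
The plan is to unpack both sides and show they define the same full subcategory of $\Psh(\cS)$ by a direct Yoneda-type computation.

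First I would unpack the right-hand side. By \Cref{dualityinvolution}, the sketch $(U^c\sS)^\circ$ has underlying category $\cS^{\op}$, empty cocone-class, and cone-class consisting of the cones in $\cS^{\op}$ corresponding (under reversal) to the cocones $\delta\colon d\Rightarrow\Delta(s)$ in $\skC_\sS$. Unfolding our notion of sketch morphism, an object $F\in\mathsf{Skt}((U^c\sS)^\circ,\Set)$ is then exactly a functor $F\colon \cS^{\op}\to\Set$ (i.e.\ a presheaf on $\cS$) such that for every $\delta\colon d\Rightarrow\Delta(s)$ in $\skC_\sS$ the induced cone $F(s)\to F(d(-))$ in $\Set$ is isomorphic to a limit cone, which in $\Set$ simply amounts to the canonical comparison map
\[ F(s)\longrightarrow \lim F(d(-)) \]
being a bijection.

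Next I would unpack the left-hand side. By \Cref{costr:left-skt-class}, $\hat{\cS}=\mathcal{H}^\perp$ inside $\Psh(\cS)$, where $\mathcal{H}$ consists of the maps $\rho_\delta\colon \mathrm{colim}\,\yo d\to \yo s$ for $\delta\in\skC_\sS$. So $F\in\hat{\cS}$ iff precomposition with $\rho_\delta$ induces a bijection $\Psh(\cS)(\yo s,F)\to \Psh(\cS)(\mathrm{colim}\,\yo d,F)$ for every such $\delta$.

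The key step is then to identify these conditions. Using Yoneda, $\Psh(\cS)(\yo s,F)\cong F(s)$, while
\[ \Psh(\cS)(\mathrm{colim}\,\yo d,F)\;\cong\;\lim \Psh(\cS)(\yo d(-),F)\;\cong\;\lim F(d(-)), \]
naturally in $F$, and under these identifications the map $\rho_\delta^*$ becomes precisely the canonical comparison $F(s)\to \lim F(d(-))$. Hence $F\in\mathcal{H}^\perp$ if and only if $F$ satisfies the condition described on the right-hand side. This yields an equality of full subcategories of $\Psh(\cS)$, as required.

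The only mildly delicate point is the \emph{up-to-iso} clause in our definition of sketch morphism (\Cref{sktcats}): on the right-hand side a presheaf $F$ is asked to send $\delta$ to a cone \emph{isomorphic} to a limit cone, not to literally a limit cone. In $\Set$ this is a non-issue since limits are determined up to unique isomorphism and the comparison map is an isomorphism iff any isomorphic cone is universal, so both formulations agree.
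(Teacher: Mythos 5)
Your proposal is correct and follows essentially the same route as the paper's own proof: unpack the sketch structure of $(U^c\sS)^\circ$ to see that its $\Set$-models are presheaves sending the specified cocones to limits, then translate the orthogonality condition defining $\mathcal{H}^\perp$ via Yoneda into the statement that the canonical comparison $F(s)\to\lim F(d(-))$ is a bijection. Your explicit handling of the up-to-iso clause in the definition of sketch morphism is a small point the paper glosses over, but otherwise the two arguments coincide.
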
 

\begin{proof}
This result essentially follows from \cite[1.33(7) and (8)]{adamek_rosicky_1994}, but we shall spell it out in detail for the sake of clarity. Let us start by recalling the sketch structure of $(U^c\sS)^\circ$.  Note that $(U^c\sS)$ is the same of $\cS$, where we forgot the limit structure. $(U^c\sS)^\circ$ is the sketch structure on $\cS^\circ$ that uses as a cones precisely the colimit cones of $(U^c\sS)$. So, a sketch morphism  $(U^c\sS)^\circ \to \Set$ is a presheaf over $\cS$ mapping the tip of the cocone into the limit of the diagram.
Now, let's go back to \Cref{costr:left-skt-class} and look at the orthogonality condition.

\[\begin{tikzcd}
	{\yo d} \\
	& {\text{colim} \yo d} & P \\
	& {\yo s}
	\arrow["{\rho_\delta}"', from=2-2, to=3-2]
	\arrow["{i_d}", from=1-1, to=2-2]
	\arrow[curve={height=12pt}, dashed, from=1-1, to=3-2]
	\arrow[from=2-2, to=2-3]
	\arrow[curve={height=-12pt}, dashed, from=1-1, to=2-3]
\end{tikzcd}\]

Now, every representable appearing in the colimit comes equipped with a map $i_d$ as in the diagram above, and thus we can rewrite the orthogonality condition as in the equation below,
\[\lim P(d) \stackrel{\text{Yoneda}}{\cong} \Set^{{\cS}^{\circ}}(\text{colim} \yo d, P) \stackrel{\text{orth.}}{\cong} \Set^{{\cC}^{\circ}}(\yo s, P) \stackrel{\text{Yoneda}}{\cong} P(s).\]
This means precisely that a presheaf $P$ is orthogonal $\rho_\delta$ if and only if it maps $s$ into the limit of the diagram specified by the cocone, which is the thesis.

\end{proof}

\begin{prop}[Universal property of the left classifier]\label{prop:rel-adj-left-class}
    Let $\sS$ be a small sketch and $J_\sS\colon\sS\to\hat{\sS}$ the sketch morphism defined in \Cref{costr:left-skt-class}. Then, $J_\sS$ is a left Morita equivalence, i.e for any left sketch $\sM$, $J_\sS$ induces induces an equivalence
    $$J_\sS^\ast\colon\Skt(\hat{\sS},\sM)\to\Skt(\sS,\sM).$$ 
\end{prop}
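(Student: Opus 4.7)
The plan is to exhibit an essentially inverse to $J_\sS^\ast$ via left Kan extension along the Yoneda embedding. Given a sketch morphism $F\colon \sS\to\sM$ with $\sM$ a left sketch, form $\widetilde{F}_0 := \text{lan}_\yo F\colon \Psh(\cS)\to\cM$; this exists because $\cM$ is cocomplete, is cocontinuous by the standard computation (pointwise left Kan extension along a fully faithful functor into a cocomplete target), and satisfies $\widetilde{F}_0\circ \yo \cong F$.

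The crucial step is to show $\widetilde{F}_0$ inverts every map $\rho_\delta \in \mathcal{H}$. For a cocone $\delta\colon d\Rightarrow \Delta(s)$ in $\skC_\sS$, the definition of a sketch morphism gives that $F\delta$ is naturally isomorphic to a cocone in $\skC_\sM$; but $\sM$ is a left sketch, so $\skC_\sM$ consists of genuine colimit cocones, whence $F\delta$ itself is colimiting in $\cM$. Applying the cocontinuous $\widetilde{F}_0$ to $\rho_\delta$ and using $\widetilde{F}_0\yo\cong F$ yields precisely the canonical comparison $\text{colim}\,Fd\to Fs$, which is an isomorphism. Hence $\widetilde{F}_0$ factors through the reflection $L$ via an essentially unique functor $\widetilde{F}\colon \hat{\cS}\to\cM$, and $\widetilde{F}\circ J_\sS \cong F$ by construction.

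Next I would verify that $\widetilde{F}$ is itself a morphism of sketches $\hat{\sS}\to\sM$. For the limit specifications $\skL_{\hat\sS} = \skL^m_{J_\sS}$, the minimality description from \Cref{prop:min-max}(1) reduces the check to $\widetilde{F}\circ J_\sS = F$ being a sketch morphism, which is our hypothesis. For the colimit side, colimits in the reflective subcategory $\hat{\cS}$ are computed by applying $L$ to colimits in $\Psh(\cS)$, and $\widetilde{F}L\cong\widetilde{F}_0$ is cocontinuous, so $\widetilde{F}$ preserves colimits; since $\skC_\sM$ contains every colimit cocone, this suffices.

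Finally, both the essential surjectivity of $J_\sS^\ast$ (already obtained) and the fully-faithfulness on hom-categories follow from a density argument. Any sketch morphism $G\colon \hat{\sS}\to\sM$ is automatically cocontinuous, because every colimit cocone in $\hat{\cS}$ belongs to $\skC_{\hat\sS}$ and every colimit cocone in $\cM$ belongs to $\skC_\sM$. Moreover, $J_\sS(\cS)$ is dense in $\hat{\cS}$: for any $X\in\hat{\cS}$, writing $X\cong LP$ and $P\cong \text{colim}_{\cS/P}\yo$ in $\Psh(\cS)$, cocontinuity of $L$ gives $X\cong \text{colim}_{\cS/P} J_\sS$. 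Consequently, any cocontinuous functor out of $\hat{\cS}$, together with any natural transformation between two such, is determined up to a unique isomorphism by its restriction along $J_\sS$. This establishes both that every $F\in\Skt(\sS,\sM)$ comes from a (unique up to iso) $\widetilde F$, and that $J_\sS^\ast$ is fully faithful on 2-cells, completing the equivalence. The main technical obstacle is the density claim, which nevertheless reduces cleanly to cocontinuity of $L$ and density of Yoneda.
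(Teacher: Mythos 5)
Your proof is correct and, for the essential-surjectivity half, follows the paper's own route almost verbatim: extend $F$ cocontinuously along Yoneda, observe that the extension inverts every $\rho_\delta$ because $F\delta$ is a genuine colimit cocone in the left sketch $\sM$, factor through the reflection, and verify the sketch-morphism condition on cones via the minimality clause of \Cref{prop:min-max} and on cocones via cocontinuity. One small step worth spelling out there: $L$ inverts not just $\mathcal{H}$ but its colimit-saturation $\mathcal{E}_\mathcal{H}$, so the factorization through $\hat{\cS}\simeq\Psh(\cS)[\mathcal{E}_\mathcal{H}^{-1}]$ really does need the cocontinuity of $\widetilde{F}_0$, as the paper notes explicitly. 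Where you genuinely diverge is in the full-faithfulness half. The paper factors $J_\sS^\ast$ as $\yo^\ast\circ L^\ast$ on cocontinuous functors and deduces full faithfulness of $L^\ast$ formally from the adjunction $I^\ast\dashv L^\ast$ having invertible counit (because $I$ is fully faithful); you instead prove that $J_\sS$ is dense in $\hat{\cS}$ and invoke the universal property of the pointwise left Kan extension $G\cong\mathrm{lan}_{J_\sS}(GJ_\sS)$ for cocontinuous $G$. Both arguments are standard and correct; note that your density computation is essentially reproduced by the paper later, in \Cref{prop:left-morita-then-dense}, so your route trades the slick doctrinal-adjunction argument for a slightly more hands-on one that you need to justify carefully (colimit-density alone would not suffice --- you must, as you do, exhibit $X$ as the \emph{canonical} colimit over $\cS/IX\cong J_\sS\downarrow X$ so that the Kan-extension universal property applies to natural transformations as well).
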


\begin{proof}
    We start showing that $J_\sS^\ast$ is essentially surjective. Let $F\colon\sS\to\sM$ be a morphism of sketches, by the universal property of $\Psh(\cS)$, since $\cM$ is cocomplete, there is a unique (up-to-iso) cocontinous functor $\tilde{F}$ as below. 
\[\begin{tikzcd}[ampersand replacement=\&]
	\cS \& {\Psh(\cS)} \& {\hat{\cS}} \\
	\& \cM
	\arrow["\yo", from=1-1, to=1-2]
	\arrow[""{name=0, anchor=center, inner sep=0}, "{\tilde{F}}"{description}, from=1-2, to=2-2]
	\arrow["L", from=1-2, to=1-3]
	\arrow["{\hat{F}}", dashed, from=1-3, to=2-2]
	\arrow[""{name=1, anchor=center, inner sep=0}, "F"', from=1-1, to=2-2]
	\arrow["\cong"{description}, draw=none, from=1, to=0]
\end{tikzcd}\]
    Now, recall that following \cite[5.4.10]{borceux_1994}, $\hat{\cS}=\mathcal{H}^\perp$ can be understood as a localisation $\Psh(\cS)[\mathcal{E}_\mathcal{H}^{-1}]$ where $\mathcal{E}_\mathcal{H}$ is a certain  closure of $\mathcal{H}$ under colimits. Because $\tilde{F}$ is cocontinuous, it follows that to check whether it extends to $\Psh(\cS)[\mathcal{E}_\mathcal{H}^{-1}]$, it is enough to check whether  $\tilde{F}$ sends all maps of $\mathcal{H}$ to isomorphisms, then by the universal property of the localisation $\Psh(\cS)[\mathcal{E}_\mathcal{H}^{-1}]$ we would get a unique functor $\hat{F}$ making the triangle above right commutative. This is true exactly if, for any cocone $\delta\in\skC_\sS$, $\tilde{F}\yo\delta$ is a colimit diagram, which is true because $\tilde{F}\yo\delta\cong F\delta$ and $F$ is a morphism of sketches. Of course it follows from the general theory of localizations, say \cite[5.3.1]{borceux_1994} that $\hat{F} \cong \tilde{F} \circ I$, with $I$ being the inclusion presented in the previous construction.
We will conclude the proof that $J_\sS^\ast$ is essentially surjective showing that $\hat{F}$ is a morphism of sketches. 
\begin{itemize}
    \item The cones in $\skL_{\hat{S}}$ are all the $J_{\cS}\sigma$ for $\sigma\in\skL_{\sS}$. Then $\hat{F}J_{\cS}\sigma\cong F\sigma$ which is a limit diagram because $F$ is a map of sketches. Alternatevely, it follows by the second part of \Cref{prop:min-max} since $\hat{F}$ preserves cones in $\skL_{\hat{S}}$ if and only if $\hat{F}J$ does.
    \item $\hat{F}$ is a left adjoint. This follows from that fact that $\tilde{F}$ is such and using \cite[Lemma 2.7]{ADLL:aft-kz-mon} with, following their notation, $L$ as $l$, $I$ as $r$ and $\tilde{F}$ as $l'$. 
\end{itemize}

Now let us show that $J^\ast_\sS$ is fully faithful. Clearly the diagram below commutes. 
\[\begin{tikzcd}[ampersand replacement=\&]
	{\Skt(\hat{\sS},\sM)} \&\& {\Skt(\hat{\sS},\sM)} \\
	{\text{CoCont}(\hat{\cS},\cM)} \& {\text{CoCont}(\Psh(\cS),\cM)} \& {\Cat(\hat{\cS},\cM)}
	\arrow["{f.f.}", from=1-3, to=2-3]
	\arrow["{f.f.}"', from=1-1, to=2-1]
	\arrow["{\yo^\ast}"', from=2-2, to=2-3]
	\arrow["{L^\ast}"', from=2-1, to=2-2]
	\arrow["{J_\sS^\ast}", from=1-1, to=1-3]
\end{tikzcd}\]
Since $\yo^\ast$ is an equivalence of categories, is enough to show that $L^\ast$ is fully faithful to prove that $J_\sS^\ast$ is such. We recall that $L$ is the left adjoint of the fully faithful functor $I\colon\hat{\cS}\hookrightarrow\Psh(\cS)$. This imply directly that $L^\ast$ is fully faithful. Indeed, \cite[Proposition~I,6.4]{Gray_formal_ct_adj} tells us that $I^\ast\dashv L^\ast$ with counit invertible (since its components are the one of the counit of $L\dashv I$) and so $L^\ast$ is fully faithful. 
\end{proof}

\begin{constr}[Functoriality of $\hat{(-)}$]  \label{functoriality}
Let $F\colon \sS \to \sT$ be a morphism of small sketches. In the diagram below, consider the adjunction $F_! \dashv F^*$, where $F^*$ is the precomposition functor, induced via the usual Kan extension.
 
\[\begin{tikzcd}[ampersand replacement=\&]
	\& \cS \&\& \cT \\
	{\hat{\cS}} \&\&\&\& {\hat{\cT}} \\
	\& {\mathsf{Psh}(\cS)} \&\& {\mathsf{Psh}(\cT)}
	\arrow[""{name=0, anchor=center, inner sep=0}, "{\yo_{\cS}}", from=1-2, to=3-2]
	\arrow[""{name=1, anchor=center, inner sep=0}, "{I_{\cS}}"', curve={height=12pt}, hook', from=2-1, to=3-2]
	\arrow[""{name=2, anchor=center, inner sep=0}, "{L_{\cS}}"{description}, curve={height=12pt}, from=3-2, to=2-1]
	\arrow[""{name=3, anchor=center, inner sep=0}, "{J_\cS}"', from=1-2, to=2-1]
	\arrow["F", from=1-2, to=1-4]
	\arrow[""{name=4, anchor=center, inner sep=0}, "{\yo_{\cT}}"', from=1-4, to=3-4]
	\arrow[""{name=5, anchor=center, inner sep=0}, "{J_\cT}", from=1-4, to=2-5]
	\arrow[""{name=6, anchor=center, inner sep=0}, "{L_{\cT}}"{description}, curve={height=-12pt}, from=3-4, to=2-5]
	\arrow[""{name=7, anchor=center, inner sep=0}, "{I_\cT}", curve={height=-12pt}, hook, from=2-5, to=3-4]
	\arrow[""{name=8, anchor=center, inner sep=0}, "{F^*}"{description}, curve={height=-18pt}, dashed, from=3-4, to=3-2]
	\arrow[""{name=9, anchor=center, inner sep=0}, "{F_!}"{description}, dashed, from=3-2, to=3-4]
	\arrow["\dashv"{anchor=center, rotate=-139}, draw=none, from=2, to=1]
	\arrow["\dashv"{anchor=center, rotate=-41}, draw=none, from=6, to=7]
	\arrow["\dashv"{anchor=center, rotate=-89}, draw=none, from=9, to=8]
	\arrow["{:=}"{marking, allow upside down}, draw=none, from=3, to=0]
	\arrow["{:=}"{marking, allow upside down}, draw=none, from=5, to=4]
	\arrow["\cong"{marking, allow upside down}, draw=none, from=0, to=4]
\end{tikzcd}\]
 
Also, consider the functor $L_{\mathbb{T}}F_!$. If we show that $L_{\mathbb{T}}F_!$ inverts all the maps in $\mathcal{H}^{\sS}$, then applying the universal property of the category of fractions (see the proof of \Cref{prop:rel-adj-left-class}) we would get a left adjoint, as in the diagram below. This follows from the fact that $F_!$ preserve colimits, the middle square in the diagram above is (naturally) pseudo-commutative and $L_\cT$ inverts the maps in $\mathcal{H}^{\sT}$. 
Indeed, for any $\delta\in\skL_\sS$ we consider $\rho_\delta \colon \text{colim} \yo d \Rightarrow \yo_\cS s$. Then, we apply $F_!(\rho)$ and get 
$$F_!\text{colim} \yo_\cS d\cong  \text{colim}F_!\yo_cS d\cong \text{colim}\yo_\cT F d\Rightarrow F_!\yo_\cS s\cong \yo_\cT Fs.$$
We notice that, since $F$ is a sketch morphism, the morphism above is in $\mathcal{H}^{\sT}$, hence $L_\cT$ inverts it. 
\[\begin{tikzcd}[ampersand replacement=\&]
	\cS \&\&\& \cT \\
	\& {\hat{\cS}} \& {\hat{\cT}} \\
	{\mathsf{Psh}(\cS)} \&\&\& {\mathsf{Psh}(\cT)}
	\arrow["{\yo_{\cS}}"', from=1-1, to=3-1]
	\arrow["{L_{\cS}}"{description}, from=3-1, to=2-2]
	\arrow[""{name=0, anchor=center, inner sep=0}, "{J_\cS}"', from=1-1, to=2-2]
	\arrow[""{name=1, anchor=center, inner sep=0}, "F", from=1-1, to=1-4]
	\arrow["{\yo_{\cS}}", from=1-4, to=3-4]
	\arrow[""{name=2, anchor=center, inner sep=0}, "{J_\cT}", from=1-4, to=2-3]
	\arrow["{L_{\cT}}"{description}, from=3-4, to=2-3]
	\arrow["{F_!}"{description}, from=3-1, to=3-4]
	\arrow[""{name=3, anchor=center, inner sep=0}, "{\hat{F_!}}", dashed, from=2-2, to=2-3]
	\arrow["\cong"{description}, draw=none, from=1, to=3]
	\arrow["{:=}"{marking, allow upside down}, draw=none, from=2, to=3-4]
	\arrow["{:=}"{marking, allow upside down}, draw=none, from=0, to=3-1]
\end{tikzcd}\]
Now let us describe some structural properties of $\hat{F_!}$.  Since $\hat{F_!}$ is defined through the universal property of the localisation $\hat{\cS}$, then $\hat{F_!}L_\cS=L_\cT F_!$. Moreover, since $\hat{\cS}$ is reflective, $L_\cS I_\cS\cong 1$, and so $\hat{F_!}\cong L_{\cT}F_!I_{\cS}$. It is easy to see that it is left adjoint to its \textit{conjugate} $L_{\cS}F^*I_{\cT}$ (similarly to \Cref{prop:rel-adj-left-class} this is again true by \cite[Lemma 2.7]{ADLL:aft-kz-mon}). Since it is a left adjoint, it is cocontinuous.

Because $\hat{F_!}$ is cocontinuous, it preserves the left part of the sketch structure between $\hat{\sS}$ and $\hat{\sT}$, and it follows by \Cref{prop:min-max} that it also preserves the limit part (because $\hat{F_!} J_\sS\cong J_\sT F$ does).
\end{constr}

\begin{prop}\label{prop:funct-of-hat}
    The assigment $\widehat{(-)}$ is a pseudofunctor $\widehat{(-)}\colon\skt\to\leftSkt^{\mathsf{M}}$. 
\end{prop}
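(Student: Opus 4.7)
The plan is to verify each layer of the pseudofunctor data in turn and use the universal property of $J_\sS$ from Proposition 5.10 to generate the compositors and unitors essentially for free.

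On objects, Construction 5.8 produces the left sketch $\hat{\sS}$, which lies in $\leftSkt^{\mathsf{M}}$ since $J_\sS\colon \sS\to\hat{\sS}$ is a left Morita equivalence with small domain by Proposition 5.10. On 1-cells I take the assignment $\hat{F}:=L_\cT F_!I_\cS$ of Construction 5.12, which is a cocontinuous morphism of left sketches equipped with a specified isomorphism $\hat{F}\circ J_\sS\cong J_\sT\circ F$ coming from the pseudo-commuting square of that construction. On 2-cells I send a natural transformation $\alpha\colon F\Rightarrow G$ in $\skt(\sS,\sT)$ to the unique natural transformation $\hat{\alpha}\colon \hat{F}\Rightarrow\hat{G}$ whose whiskering with $J_\sS$ agrees, modulo the specified isomorphisms above, with $J_\sT\alpha$; the existence and uniqueness of $\hat{\alpha}$ come from the fully faithful part of the equivalence $J_\sS^\ast$ in Proposition 5.10 applied to the left sketch $\hat{\sT}$. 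Functoriality in $\alpha$ (preservation of vertical and horizontal composition of 2-cells) is then routine, since each required equality can again be detected after whiskering with $J_\sS$.

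To produce the compositor $\phi_{G,F}\colon \widehat{GF}\stackrel{\sim}{\Rightarrow}\hat{G}\hat{F}$ for $F\colon \sS\to\sT$ and $G\colon \sT\to\sR$, I exhibit a chain of canonical isomorphisms after whiskering with $J_\sS$,
\[\widehat{GF}\circ J_\sS \;\cong\; J_\sR\circ GF \;\cong\; \hat{G}\circ J_\sT\circ F \;\cong\; \hat{G}\circ\hat{F}\circ J_\sS,\]
each step being one of the structural isomorphisms supplied by Construction 5.12. Since $\hat{\sR}$ is a left sketch, Proposition 5.10 reflects this isomorphism to a unique invertible 2-cell $\phi_{G,F}$. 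The unitor $\phi_\sS\colon \widehat{1_\sS}\stackrel{\sim}{\Rightarrow} 1_{\hat{\sS}}$ is constructed identically, since both sides restrict to $J_\sS$ itself along $J_\sS$. Naturality of $\phi_{G,F}$ in both arguments is inherited from the naturality of the constituent isomorphisms.

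The remaining task is to verify the pseudofunctor associativity pentagon and the two unit triangles. These are equalities of invertible 2-cells between composable $\widehat{(-)}$-images; by the fully faithful part of Proposition 5.10, each such equality is detected upon whiskering with the appropriate $J_\sS$. Under this whiskering every compositor and unitor in the diagrams collapses to an identity by construction, so the pentagon and triangles reduce to trivial equalities of pasting diagrams of 1-cells and 2-cells already valid in $\skt$. The principal bookkeeping obstacle, and essentially the only one, is to keep clean track of the several reflection adjunctions of Construction 5.8 as the compositors are pasted together; no new category-theoretic input beyond the universal property of the left sketch classifier is required.
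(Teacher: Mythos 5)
Your proposal is correct, and the data it assembles (objects via \Cref{costr:left-skt-class}, $1$-cells via $\hat F = L_\cT F_! I_\cS$ as in \Cref{functoriality}) is exactly the paper's. Where you diverge is in how coherence is obtained: the paper disposes of pseudofunctoriality in one line by appealing to the pseudofunctoriality of $\Psh(-)=(-)_!$ together with the universal property of the localisation $\hat\cS=\Psh(\cS)[\mathcal{E}_{\mathcal{H}}^{-1}]$, so that $\widehat{GF}$ and $\hat G\hat F$ are identified because both factor $L_\cR(GF)_!$ through the same localisation. You instead generate the $2$-cell action, the compositors, the unitors, and the pentagon/triangle axioms from the fully faithful part of the equivalence $J_\sS^*$ of \Cref{prop:rel-adj-left-class}, i.e.\ from the representable universal property of $\hat\sS$ among left sketches. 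This is the standard ``a local adjunction determines a pseudofunctor'' argument; it buys you a proof that never re-enters the presheaf category and that makes the uniqueness clauses explicit, at the cost of being longer. One small overstatement: after whiskering with $J_\sS$ the compositors do not literally become identities but become the specified pasting of the structural isomorphisms $\hat F J_\sS\cong J_\sT F$; the pentagon then reduces to the coherence of these pastings, which holds precisely because each is itself characterised by the universal property you are invoking. With that caveat the argument goes through.
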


\begin{proof}
    The action of $\widehat{(-)}$ on objects is defined in \Cref{costr:left-skt-class}, on morphisms in \Cref{functoriality} and on 2-cells is the identity. The pseudofunctoriality follows from pseudofunctoricality of $\Psh(-)=(-)_!$ and the universal property of localisation. 
\end{proof}


\subsection{Diaconescu for left sketches}
\begin{rem}
    Putting together \Cref{costr:left-skt-class} and \Cref{functoriality}, we obtain a pseudofunctor (left adjoints are only identified up to isomorphisms),     mapping a small sketch to its left classifier, and mapping a morphism of sketches to its free extension.
    \[\widehat{(-)}\colon \skt \to \mathsf{L}\Skt^{\mathsf{M}} \]
    Moreover, the content of \Cref{prop:rel-adj-left-class} is hinting to the fact  that, in the left hand side of the diagram below, the $\widehat{(-)}$-construction  should be  a relative left-biadjoint to $j$, the inclusions of Morita small left sketches into Morita-small sketches. 

\[\begin{tikzcd}
	\skt & {\leftSkt^{\mathsf{M}}} & \skt & {\leftSkt^\mathsf{M}} \\
	{\Skt^{\mathsf{M}}} && {\Skt^{\mathsf{M}}}
	\arrow["{(\hat{-})}", from=1-3, to=1-4]
	\arrow[from=1-3, to=2-3]
	\arrow[""{name=0, anchor=center, inner sep=0}, "j", from=1-2, to=2-1]
	\arrow["i"', from=1-1, to=2-1]
	\arrow[""{name=1, anchor=center, inner sep=0}, "{(\hat{-})}", from=1-1, to=1-2]
	\arrow[curve={height=18pt}, dotted, from=2-3, to=1-4]
	\arrow["j"{description}, from=1-4, to=2-3]
	\arrow["\dashv"{anchor=center, rotate=-75}, draw=none, from=1, to=0]
\end{tikzcd}\]

In the next remarks, we shall discuss how to handle size issues properly and extend \Cref{costr:left-skt-class} to any Morita small sketch. This will upgrade the relative (bi)adjunction to a proper biadjunction as in the right-hand side of the diagram above.  We chose to present this \textit{corrections} to the construction separately because they would have made the general idea behind the construction less clear.
\end{rem}

\begin{rem}[\Cref{costr:left-skt-class} for Morita-small sketches] \label{correctionlarge}
Let $\sS$ be a Morita-small sketch and let us denote with $\sD$ and $j\colon \sD \to \sS$ the small sketch and morphism of sketches providing the Morita equivalence. 
\begin{itemize}
    \item[($\yo$)] First, we replace the presheaf construction with the small presheaf construction $\mathcal{P}(\cS)$ (see \cite{adamek2020nice} for a general theory of this category and \cite[2.5 and 2.6]{di2023accessibility} for its universal property).
    \item[($\hat{\sS}$)] We can still define the orthogonality class $\mathcal{H}^\perp$. To show that $\mathcal{H}^\perp$ is reflective, we cannot invoke \cite[1.36]{adamek_rosicky_1994} on the spot, because $\mathcal{H}$ could be a priori a proper class and $\mathcal{P}(\cS)$ is not locally presentable. Yet, call $\mathcal{H}_j \subset \mathcal{H}$ the subclass of those maps in $\mathcal{H}$ that come from a colimit diagram specified in $\sD$. Clearly $\mathcal{H}_j$ is small and if we show that  $\mathcal{H}_j^\perp = \mathcal{H}^\perp$ we can apply \cite[Theorem~10.2]{kelly1980unified}, so that $\mathcal{H}^\perp$ is indeed reflective. To see that $\mathcal{H}_j^\perp = \mathcal{H}^\perp$, recall the proof of \Cref{turnaroundsketch}. $P$ is $\mathcal{H}^\perp$ if and only if it maps the colimits specified \textit{declared} by $\mathcal{H}$ into limits. Because $j$ is Morita equivalence of sketches, it is enough to test this property by precomposing with $j$, which is precisely what it means to check the orthogonality with respect to $\mathcal{H}_j^\perp$. This allows us to construct $\hat{\sS}$.
    \item[$(\mathsf{M})$] Up to this point we can be sure we have constructed a (possibly) large left classifier for every left Morita-small sketch.
\[\widehat{(-)}\colon \Skt^{\mathsf{M}} \to \leftSkt\]
It remains to show that this construction lands in left Morita small sketches. Of course, because $\sS$ is not small, we can't simply apply \Cref{prop:rel-adj-left-class} to conclude that it is Morita small. In order to finish the proof, it is enough to notice that the proof of \Cref{prop:rel-adj-left-class} carries with very minor adjustments using the recipe just described (using the universal property of the small presheaf construction) and that Morita equivalences compose, and so $J_\sS j\colon\sD\to\sS\to\hat{\sS}$ is a Morita equivalence between $\hat{\sS}$ and a small sketch~$\sD$. 
\end{itemize}
\textit{En passant}, let us notice that using \Cref{prop:left-strong-iff-hat-inv}, it will follow that $\hat{\sD}\simeq\hat{\sS}$. 
    


 
\end{rem}


\begin{rem}[\Cref{functoriality} for Morita-small sketches]
This is a straightforward adaptation/analysis of \Cref{functoriality}, where we use the fact that the small presheaf construction is the free completion under colimits.
\end{rem}
\black

\begin{rem}[A sanity check: left sketches need no hat] \label{rmk:left-class-stable} 
A key result of topos theory, which sits as a conceptual bit in the proof of Diaconescu Theorem (\Cref{diac1}), is that every topos can be understood as a canonical site for itself (see \cite[C2.2.7]{Sketches}), 

\[\mathsf{Sh}(\mathcal{E}, J_{\mathcal{E}}) \cong \mathcal{E}.\]

As a sanity check for our wanna-be version of Diaconescu Theorem, one can see that if we start with a left sketch $\sS$, then the left sketch classifier $\hat{\sS}$ is equivalent to $\sS$ itself. This is an immediate consequence of \Cref{prop:rel-adj-left-class}, since, when $\sS$ is left, $J_\sS^\ast$ reduces to an equivalence
    $$\leftSkt(\hat{\sS},\sM)\to\leftSkt(\sS,\sM)$$
    natural in $\sM$. Therefore, by Yoneda, $J_\sS^\ast$ is an equivalence.
A more direct proof of this fact could follow these lines. 
Let us start by identifying $\mathcal{S}$. First, observe that the Yoneda embedding of $\mathcal{S}$ into its category of small presheaves has a left adjoint,
\[\begin{tikzcd}[ampersand replacement=\&]
	{\mathcal{P}(\mathcal{S})} \& {\mathcal{S}.}
	\arrow[""{name=0, anchor=center, inner sep=0}, "{ \yo_{\mathcal{S}}}"', shift right=2, from=1-2, to=1-1]
	\arrow[""{name=1, anchor=center, inner sep=0}, "L"', shift right=2, from=1-1, to=1-2]
	\arrow["\top"{description}, draw=none, from=1, to=0]
\end{tikzcd}\]
This is a rephrasing of the fact that $\mathcal{S}$ is cocomplete (see \cite[Proposition~2.2]{garner2012lex}). Similarly to the discussion in \Cref{turnaroundsketch}, it is easy to see that the orthogonality condition in \Cref{costr:left-skt-class}, for the case of $\hat{\mathcal{S}}$ is precisely that given by the adjunction above, so that $\hat{\mathcal{S}}$ is nothing but $\mathcal{S}$.
\end{rem}

\begin{thm}(Diaconescu for left sketches) \label{diacweak}
The $2$-category of Morita small left sketches is bireflective in the $2$-category of Morita small sketches, and the reflector is given by the $\widehat{(-)}$ construction.

\[\begin{tikzcd}[ampersand replacement=\&]
	{\Skt^{\mathsf{M}}} \& {\leftSkt^{\mathsf{M}}}
	\arrow[""{name=0, anchor=center, inner sep=0}, "{U^{L}}"', shift right, curve={height=6pt}, from=1-2, to=1-1]
	\arrow[""{name=1, anchor=center, inner sep=0}, "{\hat{(-)}}"', shift right, curve={height=6pt}, from=1-1, to=1-2]
	\arrow["\top"{description}, draw=none, from=0, to=1]
\end{tikzcd}\]

\end{thm}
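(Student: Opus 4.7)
The plan is to assemble the biadjunction out of the three technical pieces already established in the excerpt. First, the pseudofunctor $\widehat{(-)}\colon \Skt^{\mathsf{M}} \to \leftSkt^{\mathsf{M}}$ is in hand: Proposition on functoriality of $\widehat{(-)}$ gives it for small sketches, and the remark on the correction for Morita-small sketches extends it (checking, crucially, that the output lands in \emph{Morita small} left sketches via precomposition $J_\sS\circ j$ with the witness of Morita-smallness $j\colon \sD\to \sS$). Second, the inclusion $U^L\colon \leftSkt^{\mathsf{M}}\hookrightarrow \Skt^{\mathsf{M}}$ is just the full sub-2-category inclusion. Third, for each Morita-small $\sS$ we have the candidate unit $\eta_\sS := J_\sS\colon \sS \to U^L \widehat{\sS}$ constructed in \Cref{costr:left-skt-class} (and adapted to the Morita-small case in \Cref{correctionlarge}).

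Next I would prove the core statement: for every Morita-small $\sS$ and every Morita-small left sketch $\sM$ the composite
\[
\leftSkt^{\mathsf{M}}(\widehat{\sS},\sM) \hookrightarrow \Skt^{\mathsf{M}}(\widehat{\sS},\sM) \xrightarrow{\;J_\sS^*\;} \Skt^{\mathsf{M}}(\sS,\sM)
\]
is an equivalence of hom-categories, pseudonatural in both variables. The first map is an isomorphism since $\leftSkt^{\mathsf{M}}$ is a full sub-2-category of $\Skt^{\mathsf{M}}$, while $J_\sS^*$ is an equivalence precisely by the universal property proven in \Cref{prop:rel-adj-left-class}, extended to Morita-small $\sS$ as explained in \Cref{correctionlarge} (the argument there goes through verbatim using the small-presheaf universal property in place of the presheaf one). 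Pseudonaturality in $\sM$ is immediate from the definition of $J_\sS^*$ as precomposition; pseudonaturality in $\sS$ boils down to the pseudo-commutativity of the outer hexagon linking $J_{(-)}$ with $\widehat{F_!}$, which was verified in \Cref{functoriality} via the universal property of the localisation $\Psh(\cS)[\mathcal{E}_\mathcal{H}^{-1}]$.

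The previous step establishes the biadjunction $\widehat{(-)}\dashv U^L$. To upgrade it to bireflectivity, it remains to show that the counit $\varepsilon_\sM\colon \widehat{U^L\sM}\to \sM$ is an equivalence for every Morita-small left sketch $\sM$. This is exactly the content of the sanity check in \Cref{rmk:left-class-stable}: when $\sM$ is already left, the orthogonality presentation of $\widehat{\sM}$ collapses to the reflection witnessing cocompleteness of $\cM$, so $\widehat{\sM}\simeq \sM$. Alternatively, one invokes Yoneda directly on the natural equivalence just established: both $\widehat{U^L\sM}$ and $\sM$ corepresent the same 2-functor $\leftSkt^{\mathsf{M}}(-,?)$ restricted along $U^L$, giving the counit as an equivalence.

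The main obstacle is not conceptual but bookkeeping: one must carefully track coherence and size. In particular, the pseudonaturality square for $\widehat{(-)}$ in \Cref{functoriality} is only commutative up to canonical invertible 2-cell coming from the universal property of localisation, and these need to assemble into the pseudonatural transformation $\eta$. The only genuinely delicate point is ensuring that, for Morita-small (rather than small) inputs, the orthogonality class $\mathcal{H}$ can be replaced by the small subclass $\mathcal{H}_j$ without altering $\mathcal{H}^\perp$, so that Kelly's reflectivity theorem applies; this is precisely the move made in \Cref{correctionlarge}, and with it the proof goes through.
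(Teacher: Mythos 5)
Your proposal is correct and follows essentially the same route as the paper: the biadjunction is assembled from the universal property of the left classifier (\Cref{prop:rel-adj-left-class}), extended to Morita small sketches via \Cref{correctionlarge}, with the unit given by $J_\sS$ (strictly natural in one direction, only pseudonatural in the other, hence a biadjunction rather than a pseudoadjunction) and the counit shown to be an equivalence by \Cref{rmk:left-class-stable}. The extra detail you supply on pseudonaturality in $\sS$ via \Cref{functoriality} and on the $\mathcal{H}_j$ versus $\mathcal{H}$ size issue is consistent with, and slightly more explicit than, the paper's own argument.
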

\begin{proof}
 \Cref{prop:rel-adj-left-class} provides a equivalence, for any small sketch $\sS$ and left sketch $\sM$, 
$$\leftSkt(\hat{\sS},\sM)=\Skt(\hat{\sS},\sM)\simeq\Skt(\sS,U^L\sM).$$
Then,  \Cref{correctionlarge} extends this to Morita small sketches, hence we get the equivalence below for any Morita small sketch $\sS$ and Morita small left sketch $\sM$. 
$$\leftSkt^\mathsf{M}(\hat{\sS},\sM)\simeq\Skt^\mathsf{M}(\sS,U^L\sM)$$
We notice that, the direction $\leftSkt^\mathsf{M}(\hat{\sS},\sM)\to\Skt^\mathsf{M}(\sS,U^L\sM)$ is given by precomposition with $J_\sS$ and so it is strictly natural. The other direction instead, is only pseudonatural, as apparent from the construction given in \Cref{prop:rel-adj-left-class} where $\hat{F}$ is define up to natural isomorphism. 

For these reasons, we get a biadjunction (and neither 2-natural nor pseudo). 

We underline that by \Cref{rmk:left-class-stable} the counit of the biadjunction is an equivalence, since if we start with a left sketch $\sM$, then
$\widehat{(-)}\circ U^L (\sM)=\hat{\sM}\simeq\sM.$
\end{proof}

A quite classical way to look at Diaconescu theorem is to say that the $2$-category of topoi is a localization of the $2$-category of sites up to inverting Morita equivalences. Despite having been folklore for a long time, this theorem was formalized only very recently in \cite{ramos2018grothendieck}. In the proposition below we recover at least the essence of this result.

\begin{prop}\label{prop:left-strong-iff-hat-inv}
    Let $F\colon\sS \to \sT$ be a sketch morphism, then $\hat{F}$ is invertible if and only if $F$ is a left Morita equivalence. 
\end{prop}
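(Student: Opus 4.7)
The plan is to deduce this as a formal corollary of \Cref{prop:rel-adj-left-class} together with the functoriality of $\widehat{(-)}$ from \Cref{functoriality}. The key is a natural square obtained by whiskering. Concretely, from the isomorphism $\hat{F}\circ J_\sS\cong J_\sT\circ F$ established in \Cref{functoriality}, precomposition gives, for every left sketch $\sM$, a pseudo-commutative square
\[\begin{tikzcd}[ampersand replacement=\&]
\Skt(\hat{\sT},\sM) \& \Skt(\sT,\sM) \\
\Skt(\hat{\sS},\sM) \& \Skt(\sS,\sM)
\arrow["{J_\sT^*}", from=1-1, to=1-2]
\arrow["{J_\sS^*}"', from=2-1, to=2-2]
\arrow["{\hat{F}^*}"', from=1-1, to=2-1]
\arrow["{F^*}", from=1-2, to=2-2]
\end{tikzcd}\]
in which, by \Cref{prop:rel-adj-left-class}, both horizontal functors $J_\sS^*$ and $J_\sT^*$ are equivalences of categories (and in the Morita small case, one uses \Cref{correctionlarge} in place of \Cref{prop:rel-adj-left-class}).

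For the forward implication, I would assume $\hat{F}$ is invertible in $\leftSkt^\mathsf{M}$, so that $\hat{F}^*$ is an equivalence on the left column. The 2-out-of-3 property applied to the square above immediately gives that $F^*\colon\Skt(\sT,\sM)\to\Skt(\sS,\sM)$ is an equivalence for every left sketch $\sM$, which is precisely the definition of $F$ being a left Morita equivalence (\Cref{def:strong-morita-equiv}).

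For the backward implication, I would assume $F$ is a left Morita equivalence, so $F^*$ is an equivalence for every left $\sM$. Again by 2-out-of-3 in the square, $\hat{F}^*\colon\Skt(\hat{\sT},\sM)\to\Skt(\hat{\sS},\sM)$ is an equivalence for every left sketch $\sM$. Since both $\hat{\sS}$ and $\hat{\sT}$ are left sketches and $\leftSkt^\mathsf{M}\hookrightarrow\Skt^\mathsf{M}$ is locally fully faithful on hom-categories whose target is a left sketch, this equivalence restricts to an equivalence $\leftSkt^\mathsf{M}(\hat{\sT},\sM)\simeq\leftSkt^\mathsf{M}(\hat{\sS},\sM)$ pseudonatural in $\sM\in\leftSkt^\mathsf{M}$. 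Applying the bicategorical Yoneda lemma in $\leftSkt^\mathsf{M}$ then yields that $\hat{F}$ is an equivalence.

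The argument is essentially formal, so I would not expect any serious obstacle. The only point requiring slight care is checking that the comparison $\hat{F}J_\sS\cong J_\sT F$ constructed in \Cref{functoriality} is genuinely pseudo-natural in $F$ (so that whiskering produces an honest pseudo-commutative square), and that the small/Morita-small size distinctions are handled coherently via \Cref{correctionlarge}; both of these are already addressed in the preceding constructions.
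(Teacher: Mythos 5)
Your proof is correct and follows essentially the same route as the paper: the same naturality square relating $J_\sS^*$, $J_\sT^*$, $F^*$ and $\hat{F}^*$, the same appeal to \Cref{prop:rel-adj-left-class} to make the comparison functors equivalences, and the same Yoneda argument in $\leftSkt$ to pass from $\hat{F}^*$ being an equivalence to $\hat{F}$ being one. The extra care you take about pseudo-naturality in $\sM$ and the Morita-small case is a welcome elaboration of points the paper leaves implicit.
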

\begin{proof}
In the diagram below, the vertical arrows are equivalences of functors by \Cref{prop:rel-adj-left-class}. Thus, the top horizontal arrow is an equivalence if and only if the bottom one is. The rest follows by Yoneda lemma, because both $\hat{\sT}$ and $\hat{\sS}$ are left sketches.

\[\begin{tikzcd}[ampersand replacement=\&]
	{\Skt(\sT,-)} \& {\Skt(\sS,-)} \\
	{\leftSkt(\hat{\sT},-)} \& {\leftSkt(\hat{\sS},-)}
	\arrow["{J^*_{\sT}}", from=2-1, to=1-1]
	\arrow["{J^*_{\sS}}"', from=2-2, to=1-2]
	\arrow["{F^*}"', from=1-2, to=1-1]
	\arrow["{\hat{F}^*}", from=2-2, to=2-1]
\end{tikzcd}\]
\end{proof}

\subsection{About the \textit{lemme de comparison}} \label{lemmelemme}
A very classical and celebrated result in topos theory is the so-called \textit{lemme de comparison}, which establishes that every generating subcategory of a topos admits a site structure that turns the inclusion into a dense subsite. See \cite[Prop 5.5 and Thm 5.7]{caramello2019denseness} for a modern reference on this theorem, and a quite substantial generalization of the original result due to the french school. Some evidence that a form of the lemme de comparison could be true in our context is given by the proposition below.

\begin{prop}[Evidence pro \textit{lemme de comparison}]\label{prop:left-morita-then-dense}
Let $F\colon\sS\to\sT$ is a left Morita equivalence where $\sT$ is a left sketch, then the underlying functor $F$ is dense (in $\Cat$).
\end{prop}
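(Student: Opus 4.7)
The strategy is to reduce density of $F$ to density of the universal morphism $J_\sS\colon\sS\to\hat\sS$ into the left sketch classifier, exploiting the machinery from \Cref{sec:left-skt-class}.

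First, by \Cref{prop:left-strong-iff-hat-inv}, since $F$ is a left Morita equivalence, the induced morphism $\hat{F}\colon\hat{\sS}\to\hat{\sT}$ is an equivalence. Moreover, since $\sT$ is already a left sketch, \Cref{rmk:left-class-stable} gives that $J_\sT\colon\sT\to\hat{\sT}$ is also an equivalence. The pseudo-naturality established in \Cref{functoriality} provides an invertible 2-cell $\hat{F}\circ J_\sS\cong J_\sT\circ F$, so after inverting the two equivalences we may identify $F$ (up to equivalence) with $J_\sS$ pre- and post-composed with equivalences of categories. Because density in $\Cat$ is stable under composition with equivalences, it suffices to prove that $J_\sS\colon\cS\to\hat{\cS}$ is dense.

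For this, recall from \Cref{costr:left-skt-class} (and its adjustment \Cref{correctionlarge} in the Morita small case) that $J_\sS$ factors as $L_\cS\circ\yo$, where $\yo\colon\cS\to\mathcal{P}(\cS)$ is the Yoneda embedding into small presheaves and $L_\cS$ is the reflector onto $\hat{\cS}$, with fully faithful right adjoint $I_\cS$. Fix $x\in\hat{\cS}$. The density of Yoneda presents $I_\cS x$ as the canonical colimit of $\yo\circ\pi$, where $\pi\colon\yo\downarrow I_\cS x\to\cS$ is the projection from the category of elements. Applying the cocontinuous functor $L_\cS$ and using $L_\cS I_\cS\cong \mathrm{id}$ (from full-faithfulness of $I_\cS$), we obtain
\[
x\;\cong\;L_\cS I_\cS x\;\cong\;L_\cS\bigl(\mathrm{colim}\;\yo\pi\bigr)\;\cong\;\mathrm{colim}\;\bigl(L_\cS\yo\pi\bigr)\;=\;\mathrm{colim}\;\bigl(J_\sS\pi\bigr).
\]

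Finally, the adjunction $L_\cS\dashv I_\cS$ induces a canonical isomorphism of categories over $\cS$ between $\yo\downarrow I_\cS x$ and $J_\sS\downarrow x$: an object $(c,\,\yo c\to I_\cS x)$ of the former corresponds by transposition to $(c,\,J_\sS c\to x)$ in the latter, and morphisms match since the unit $\yo c\to I_\cS L_\cS\yo c=I_\cS J_\sS c$ is natural. Under this identification, the colimit cocone above becomes exactly the canonical cocone associated to $J_\sS$ at $x$, so $J_\sS$ is dense. I expect the only delicate verification to be this last compatibility check of the two canonical cocones; once it is in place, density of $F$ follows immediately from the reduction in the first paragraph.
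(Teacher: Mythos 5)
Your proof is correct, and its first half coincides exactly with the paper's: both reduce density of $F$ to density of $J_\sS$ by invoking \Cref{prop:left-strong-iff-hat-inv} (so $\hat F$ is an equivalence), \Cref{rmk:left-class-stable} (so $J_\sT$ is an equivalence), and the pseudo-commutative square from \Cref{functoriality}. Where you diverge is in establishing density of $J_\sS = L_\cS\circ\yo$. The paper does this by a purely formal computation of the density comonad, showing $\mathrm{lan}_{J_\sS}J_\sS\cong 1$ via the Kan-extension calculus ($\mathrm{lan}_{L}(-)\cong -\circ I$ since $L$ is a left adjoint, left adjoints preserve left Kan extensions, density of Yoneda, and $L\circ I\cong 1$). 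You instead argue pointwise: exhibit each $x\in\hat\cS$ as the canonical colimit of representables over $\yo\downarrow I_\cS x$, push this colimit through the cocontinuous reflector, and transport the cocone along the isomorphism of comma categories $\yo\downarrow I_\cS x\cong J_\sS\downarrow x$ induced by the adjunction $L_\cS\dashv I_\cS$. The two arguments use exactly the same ingredients (density of Yoneda, cocontinuity of $L_\cS$, full faithfulness of $I_\cS$); yours is more elementary and makes the canonical cocone explicit, at the cost of the final compatibility check you flag, while the paper's version packages that check invisibly into the functoriality of $\mathrm{lan}$. Both are instances of the standard fact that a reflector onto a reflective subcategory, precomposed with a dense functor, is dense.
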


\begin{proof}
 Consider the (pseudo) commutative diagram below. The dashed functors are equivalences of categories, $J_\sT$ by \Cref{rmk:left-class-stable} and $\hat{F}$ by \Cref{prop:left-strong-iff-hat-inv}. Thus, $F$ is dense in $\Cat$ if and only if $J_{\sS}$ is. 
\[\begin{tikzcd}[ampersand replacement=\&]
	\sS \& \sT \\
	{\hat{\sS}} \& {\hat{\sT}}
	\arrow[""{name=0, anchor=center, inner sep=0}, "{J_{\sS}}"', from=1-1, to=2-1]
	\arrow[""{name=1, anchor=center, inner sep=0}, "{J_{\sT}}", dashed, from=1-2, to=2-2]
	\arrow["F", from=1-1, to=1-2]
	\arrow["{\hat{F}}"', dashed, from=2-1, to=2-2]
	\arrow["\cong"{description}, draw=none, from=0, to=1]
\end{tikzcd}\]
The latter is true by construction of $J_\sS$, indeed, following the notation set in \Cref{costr:left-skt-class}, we have the chain of isomorphisms below. 
\begin{align*}
    \text{lan}_{J_\sS}{J_\sS} & = \text{lan}_{L\yo_{\cS}}{(L\yo_{\cS})}  
    & \\
                            & \cong \text{lan}_{L} (\text{lan}_{\yo_{\cS}}(L\yo_{\cS}))
    & (\text{lan is a ladj, ladjs compose})\\
        & \cong \text{lan}_{\yo_{\cS}}(L\yo_{\cS}) \circ I 
    & (\text{lan}_L(-) \cong - \circ I)\\
               & \cong L \text{lan}_{\yo_{\cS}}(\yo_{\cS}) \circ I
           & (\text{ladjs preserve lans})\\
       & \cong L \circ I 
    & (\text{density of Yoneda})\\
              & \cong 1
    & (\text{reflectivity})
\end{align*}
 We remark that $\text{lan}_L(-) \cong - \circ I$ because $L$ is left adjoint, and thus $\text{lan}_L(1) \cong I$ and moreover such Kan extension is absolute.
\end{proof}

Unfortunately though, no version of the lemme de comparison seems to be true at our level of generality, in at least two senses: 
\begin{enumerate}
    \item The fact that any strong generator is automatically a dense subcategory is a very special behavior of a topos (see \cite{street1986categories}) and thus the only version of the lemma that has a chance of being true would be that if $F\colon \cS \to \sT$ is a dense functor, there exists a sketch structure on $\cS$ that turns $F$ into a left Morita equivalence.
    \item Moreover, the canonical strategy to prove such result has a flaw, which we shall discuss below for its relevance to the general theory. 
\end{enumerate}

In order to try and build a sketch structure on $\cS$, we start by embedding $\cT$ in the presheaf category over $\cS$. Because $F$ is dense, the nerve-realization paradigm establishes $\cT$ as a reflective subcategory of $\Psh(\sT)$, in particular, this establishes $\cT$ as an orthogonality class in $\Psh(\cS)$.
\[\begin{tikzcd}[ampersand replacement=\&]
	\cS \&\& \cT \\
	\\
	{\Psh(\cS)}
	\arrow["\yo"', from=1-1, to=3-1]
	\arrow[""{name=0, anchor=center, inner sep=0}, "{N(f)}", curve={height=-18pt}, dashed, hook', from=1-3, to=3-1]
	\arrow[""{name=1, anchor=center, inner sep=0}, "{\text{lan}_{\yo} F}", from=3-1, to=1-3]
	\arrow["F", from=1-1, to=1-3]
	\arrow["\dashv"{anchor=center, rotate=-39}, draw=none, from=1, to=0]
\end{tikzcd}\]
We shall call $\mathcal{H}$ the class of maps inverted by ${\text{lan}_{\yo} F}$, so that we have $\cT \simeq \mathcal{H}^\perp$. At this point, one would like to say that it is enough to invert maps whose codomain is a representable, so that they look as follow, \[P \to \yo s,\]
and such family of maps gives us a family of cocones on $\cS$, which would be the colimit part of our sketch structure. Such argument can be carried when $\cT$ is a topos because $\mathcal{H}$ will be closed under pullback, and using the density of representables, and the fact that a presheaf category is extensive one can actually prove that $\mathcal{H}$ is generated by maps of the form $P \to \yo c$. Yet, in the general case, there is no reason for this to be true, and the canonical proof strategy breaks.


\subsection{Keep it small: left normalization}
This subsection is a small detour on the general theme of left sketches. Recall from the definition of left sketch that a left sketch is, \textit{trivially}, left normal (see \Cref{skt}). This gives us the horizontal forgetful functor in the diagram below.

\[\begin{tikzcd}[ampersand replacement=\&]
	{\leftSkt^\text{M}} \&\& {\leftnSkt^\text{M}} \\
	\\
	\& {\Skt^\text{M}}
	\arrow["{U^L_{ln}}", from=1-1, to=1-3]
	\arrow[""{name=0, anchor=center, inner sep=0}, "{U^{ln}}"', from=1-3, to=3-2]
	\arrow[""{name=1, anchor=center, inner sep=0}, "{U^L}", from=1-1, to=3-2]
	\arrow[""{name=2, anchor=center, inner sep=0}, "{\widehat{(-)}}", curve={height=-24pt}, from=3-2, to=1-1]
	\arrow[""{name=3, anchor=center, inner sep=0}, "{\widetilde{(-)}}"', curve={height=18pt}, dashed, from=3-2, to=1-3]
	\arrow["\dashv"{anchor=center, rotate=30}, draw=none, from=2, to=1]
	\arrow["\dashv"{anchor=center, rotate=151}, draw=none, from=3, to=0]
\end{tikzcd}\]

In this section we will see that a small variant of $\widehat{(-)}$ provides the construction
of the free left normal sketch. This is particularly useful because it respects sizes of
sketches, in the sense that when S is small its left normalization will still be small.

\begin{constr}[Left normalisation]\label{costr:left-normalisation}
From $\hat{\sS}$ we construct the \emph{left normalisation} $\Tilde{\sS}$ of $\sS$ using the es-ff (essentially surjective on objects and fully faithful) weak 2-dimensional factorisation system on $\Cat$.\black 
\[\begin{tikzcd}[ampersand replacement=\&]
	\cS \&\& {\hat{\cS}} \\
	\& {\tilde{\cS}}
	\arrow["{J_\cS}", from=1-1, to=1-3]
	\arrow[""{name=0, anchor=center, inner sep=0}, "{P_\cS}"', from=1-1, to=2-2]
	\arrow[""{name=1, anchor=center, inner sep=0}, "{R_\cS}"', from=2-2, to=1-3]
	\arrow["\cong"{description}, draw=none, from=0, to=1]
\end{tikzcd}\]
Hence, $\tilde{\cS}$ is small. We define $\tilde{\sS}$ as the category $\tilde{\cS}$ equipped with the minimal sketch structure making $P_\cS$ a morphism of sketches, using \Cref{prop:min-max}. 
\end{constr}

We underline that in \Cref{costr:left-normalisation} above one could have chosen the bo-ff factorisation system (which is a strict 2-dimensional system) and still recover some results similar to the ones in this section. 
We chose the weak version, because the construction $\widehat{(-)}$ is only \emph{pseudo}functorial, hence in order to get a (bi)adjunction in \Cref{thm:left-norm-adj} we need the weak one.

\begin{rem}[Left normalization and subcanonical topologies] 
    The left normalization $\tilde{\sS}$ of a sketch $\sS$ is designed to mimik and generalize a standard construction in topos theory. Let $(C,J)$ be a site, and recall that a topology is called \textit{subcanonical}, when the sheafification functor is fully faithful, i.e. when representables are automatically sheaves. It is well known that for $(C,J)$ a site, one can always find a Morita equivalent site $(C,J) \to (\tilde{C}, \tilde{J}) $ whose topology $\tilde{J}$ is subcanonical. The standard way to do so is indeed to consider the (es-ff) factorization of the sheafification functor, and then equip $\tilde{C}$ in the diagram below with the correct topology.
\[\begin{tikzcd}[ampersand replacement=\&]
	C \&\& {\mathsf{Sh}(C,J)} \\
	\& {\tilde{C}}
	\arrow["J", from=1-1, to=1-3]
	\arrow["P"', dashed, from=1-1, to=2-2]
	\arrow["R"', dashed, from=2-2, to=1-3]
\end{tikzcd}\]
    As we have seen in \Cref{leftsketchclasstopos}, for $(C,J)$ a site (identified with its associated sketch from \cref{sites}) the construction of the classifying left sketch coincides with the construction of $\mathsf{Sh}(C,J)$ and thus, indeed, the left normalization of $(C,J)$ is the same of its correction into a subcanonical site.
\end{rem}

\begin{rem}[Left normal sketches need no tilde] \label{rem:left-norm-no-like-tilde} 
    Similarly to \Cref{rmk:left-class-stable}, we can check that when we start with a left normal sketch $\sS$, then its left normalisation $\tilde{\sS}$ is equivalent to itself. 
    The key point is to notice that when $\sS$ is left normal, then $J_\sS$ is fully faithful. 
    In order to show this, we go back to the orthogonality condition in \Cref{costr:left-skt-class} as we did in \Cref{turnaroundsketch}. This time, because $\sS$ is left normal, we know that the codomain of any $\rho_\delta$\footnote{ We recall that, given any $\delta\colon d\Rightarrow \Delta(s)\in\skC_\sS$, we define $\rho_\delta \colon \text{colim} \yo d \Rightarrow \yo s$ as the universal map given by the colimit. } is itself of the form $\yo (\text{colim}d)$ and what we want to show is that a representable is always orthogonal to such map. Indeed, when this is true, then $L\yo (x) \cong \yo(x)$ and thus $J_\sS$ will be fully faithful. 
 
\[\begin{tikzcd}[ampersand replacement=\&]
	{\text{colim} (\yo d)} \& {\yo(x)} \\
	{\yo (\text{colim}d)}
	\arrow["{\rho_\delta}"', from=1-1, to=2-1]
	\arrow[from=1-1, to=1-2]
\end{tikzcd}\]
    The fact that a representable is orthogonal to such map follows directly from the discussion in \Cref{turnaroundsketch}. Indeed, a presheaf is orthogonal to such map if and only if it maps the tip to the limit of the diagram, and this is true because representables are continuous (and -- again -- $\sS$ is left normal).
    Therefore, $L\yo=J_\sS$ coincides with the Yoneda embedding restricted to its image. 
    We can then conclude that $J_\sS$ is fully faithful since both $\yo$ and $L$ restricted to representable are as well. 
    Now, by definition $P$ is essentially surjective on objects, and since $RP=J_\sS$ which is fully faithful, it is also fully faithful. 
    Hence, if $\sS$ is left normal, then $P$ is an equivalence. 
\end{rem}

\begin{thm}
\label{thm:left-norm-adj}
    \Cref{costr:left-normalisation} gives a biadjoint as below. 
\[\begin{tikzcd}[ampersand replacement=\&]
	\skt \& \leftnskt
	\arrow[""{name=0, anchor=center, inner sep=0}, "{\tilde{(-)}}"', curve={height=12pt}, from=1-1, to=1-2]
	\arrow[""{name=1, anchor=center, inner sep=0}, curve={height=12pt}, hook', from=1-2, to=1-1]
	\arrow["\top"{description}, draw=none, from=1, to=0]
\end{tikzcd}\]

\end{thm}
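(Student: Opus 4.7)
The plan is to exhibit, for each small sketch $\sS$ and each small left normal sketch $\sN$, a pseudo-natural equivalence
\[
(-)\circ P_\sS\colon \leftnskt(\tilde{\sS},\sN)\;\xrightarrow{\simeq}\;\skt(\sS,\sN),
\]
with unit $P_\sS$ and invertible counit supplied by the equivalence $\sN\simeq\tilde{\sN}$ from \Cref{rem:left-norm-no-like-tilde}.

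As a sanity check, I would first verify that $\tilde{\sS}$ is itself left normal, so that the right-hand side of the equivalence makes sense. Its specified cocones are those of the form $P_\sS\delta$ for $\delta\in\skC_\sS$ (\Cref{prop:min-max}(1)). Since $R_\sS P_\sS\delta\cong J_\sS\delta$ is a colimit cocone in the left sketch $\hat{\sS}$, with tip in the essential image of the fully faithful $R_\sS$, and fully faithful functors reflect such colimits, the cocone $P_\sS\delta$ is itself a colimit in $\tilde{\cS}$.

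For essential surjectivity of $(-)\circ P_\sS$, given $F\colon\sS\to\sN$ I would form $J_\sN F\colon\sS\to\hat{\sN}$ and invoke \Cref{prop:rel-adj-left-class} (the small-sketch form of Diaconescu, \Cref{diacweak}) to obtain an essentially unique sketch morphism $\overline{F}\colon\hat{\sS}\to\hat{\sN}$ with $\overline{F}J_\sS\cong J_\sN F$. The composite $\overline{F}R_\sS\colon\tilde{\cS}\to\hat{\cN}$ sends each $P_\sS s$ to an object isomorphic to $J_\sN F s$, so by essential surjectivity of $P_\sS$ its whole essential image lies in that of $J_\sN$. By \Cref{rem:left-norm-no-like-tilde}, $J_\sN$ is fully faithful (because $\sN$ is left normal), so there is a unique-up-to-iso functor $\tilde{F}\colon\tilde{\cS}\to\cN$ with $J_\sN\tilde{F}\cong\overline{F}R_\sS$, and then $J_\sN\tilde{F}P_\sS\cong\overline{F}J_\sS\cong J_\sN F$ gives $\tilde{F}P_\sS\cong F$ by full faithfulness of $J_\sN$. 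That $\tilde{F}$ is a sketch morphism $\tilde{\sS}\to\sN$ is then immediate from \Cref{prop:min-max}(1), since $\tilde{F}P_\sS\cong F$ is one.

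For full faithfulness of $(-)\circ P_\sS$, together with the bijection on $2$-cells, the strategy is to leverage the two-dimensional content of Diaconescu (which promotes \Cref{prop:rel-adj-left-class} to an equivalence of hom-categories, not just a bijection of objects) together with the fact that $J_\sN$ is fully faithful. Given sketch morphisms $H_1,H_2\colon\tilde{\sS}\to\sN$ and a natural transformation $\alpha\colon H_1 P_\sS\Rightarrow H_2 P_\sS$, one whiskers by $J_\sN$ to get a natural transformation between the composites $J_\sN H_i P_\sS\colon\sS\to\hat{\sN}$, applies Diaconescu on $\sS$ with the left sketch $\hat{\sN}$ to extend to a natural transformation between the left-sketch extensions $\overline{J_\sN H_i P_\sS}\colon\hat{\sS}\to\hat{\sN}$, and then precomposes with $R_\sS$ to recover a natural transformation $J_\sN H_1\Rightarrow J_\sN H_2$, lifting back to $H_1\Rightarrow H_2$ by full faithfulness of $J_\sN$. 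Pseudo-naturality in $\sN$ is then automatic from the pseudo-functoriality of $\hat{(-)}$ (\Cref{prop:funct-of-hat}). The main technical obstacle I anticipate is the coherent identification $\overline{J_\sN H P_\sS}\circ R_\sS\simeq J_\sN H$ that makes the last whiskering step well-defined; this is where the density of the image of $J_\sS$ in $\hat{\sS}$ (cf.\ \Cref{prop:left-morita-then-dense}), together with the cocontinuity of Diaconescu extensions, do the real work, and this is precisely the reason why the \emph{weak} es-ff factorisation (rather than the bo-ff one) was chosen in \Cref{costr:left-normalisation}.
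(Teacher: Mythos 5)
Your essential-surjectivity argument is fine and is a legitimate variant of the paper's: where the paper invokes the (pseudo)functoriality of the es-ff weak factorisation system on the square relating $J_\sS$ and $J_\sT$, you hand-build the lift by extending $J_\sN F$ along $J_\sS$ via \Cref{prop:rel-adj-left-class} and then factoring $\overline{F}R_\sS$ through the fully faithful $J_\sN$; both routes rest on the same facts (\Cref{rem:left-norm-no-like-tilde} and \Cref{prop:min-max}). The problem is the full-faithfulness half. The step you yourself flag, namely the identification $\overline{J_\sN H P_\sS}\circ R_\sS\simeq J_\sN H$ for an \emph{arbitrary} sketch morphism $H\colon\tilde{\sS}\to\sN$, is not a technicality to be smoothed over by "density plus cocontinuity": it is essentially the statement that precomposition with $P_\sS$ is conservative on morphisms into left sketches, i.e.\ a piece of the very equivalence you are trying to prove. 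Concretely, $P_\sS$ is essentially surjective but in general \emph{not full} (it is full exactly when $J_\sS$ is), so $\tilde{\cS}$ contains morphisms $J_\sS s\to J_\sS s'$ not of the form $J_\sS f$; knowing $\overline{J_\sN H P_\sS}R_\sS$ and $J_\sN H$ agree after precomposition with $P_\sS$ says nothing about their agreement on those extra morphisms, and $J_\sN H$ carries no cocontinuity property (it only preserves the specified cocones $P_\sS\delta$) that would let you reconstruct it from its restriction along $P_\sS$ by a colimit formula. The same obstruction reappears in your 2-cell argument: a transformation $\alpha\colon H_1P_\sS\Rightarrow H_2P_\sS$ is natural only against morphisms in the image of $P_\sS$, so conjugating by isomorphisms does not by itself yield a transformation natural against all of $\tilde{\cS}$.

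The paper avoids this by never trying to recover the "downstairs" data from the "upstairs" data. It uses the orthogonality of eso against fully faithful functors in its full two-dimensional form: a filler of the square with $P_\sS$ on the left and $R_\sT$ on the right is essentially uniquely determined by \emph{compatible} data on both the top ($P_\sT F$) and the bottom ($\hat{F}R_\sS$), and for a left normal target the bottom leg is supplied by the already-constructed $\hat{F}$ while $P_\sT$ is an equivalence. If you want to salvage your approach, you must first prove $\overline{J_\sN H P_\sS}\circ R_\sS\simeq J_\sN H$ independently (e.g.\ by showing directly that $R_\sS^\ast$ and $P_\sS^\ast$ jointly reflect isomorphisms of sketch morphisms into left sketches, which again is the orthogonality argument in disguise); as written, the proof is circular at its crucial point. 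A last small correction: the paper's stated reason for preferring the weak es-ff factorisation over the bo-ff one is that $\widehat{(-)}$ is only pseudofunctorial (so only a biadjunction can be expected), not the coherence of the whiskering step you describe.
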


\begin{proof}
Given $\sS\in\skt$ and $\sT\in\leftnskt$, we want to show that precomposing with $P_\sS\colon\sS\to\tilde{\sS}$ induces  a natural equivalence as below. 
$$-\circ P_\sS\colon \leftnskt(\tilde{\sS},\sT)\simeq\skt(\sS,U^{ln}\sT)$$
Since $\sT$ is left normal, $P_\sT\colon\sT\to\tilde{\sT}$ is an equivalence, hence it suffices to show that there is a natural equivalence as below.
$$\leftnskt(\tilde{\sS},\tilde{\sT})\simeq\skt(\sS,U^{ln}\sT)$$
For any morphism of sketches $F\colon \sS \to U^{ln}\sT$, the (pseudo)functoriality of the es-ff weak factorization system provides us with the essentially unique functor $\tilde{F}$. 
\[\begin{tikzcd}[ampersand replacement=\&]
	\sS \&\& \sT \\
	{\tilde{\sS}} \&\& {\tilde{\sT}} \\
	{\hat{\sS}} \&\& {\hat{\sT}}
	\arrow["F", from=1-1, to=1-3]
	\arrow[""{name=0, anchor=center, inner sep=0}, "{P_{\sS}}"', from=1-1, to=2-1]
	\arrow["{\tilde{F}}"', dashed, from=2-1, to=2-3]
	\arrow[""{name=1, anchor=center, inner sep=0}, "{P_{\sT}}", from=1-3, to=2-3]
	\arrow[""{name=2, anchor=center, inner sep=0}, "{R_{\sS}}"', from=2-1, to=3-1]
	\arrow["{J_{\sS}}"', curve={height=24pt}, from=1-1, to=3-1]
	\arrow[""{name=3, anchor=center, inner sep=0}, "{R_{\sT}}", from=2-3, to=3-3]
	\arrow["{J_{\sT}}", curve={height=-24pt}, from=1-3, to=3-3]
	\arrow["{\hat{F}}"', from=3-1, to=3-3]
	\arrow["\cong"{description}, draw=none, from=0, to=1]
	\arrow["\cong"{description}, draw=none, from=2, to=3]
\end{tikzcd}\]
Moreover, $\tilde{F}$ is a morphism of sketches, since $\tilde{F}P_\sS\cong P_\sT F$ is such (by definition of the sketch structure on $\tilde{\sS}$ and \Cref{prop:min-max}). On the other hand, given any $G\colon\tilde{\sS}\to\tilde{\sT}$, we can recover $F\colon\sS\to\sT$ precomposing with $P_\sS$ and postcomposing with a inverse of $P_\sT$.  
On 2-cells it follows because the es-ff is a weak 2-categorical factorization. 

 As argued at the end of \Cref{diacweak}, if we have two directions which are strictly natural, we get a pseudo adjunction, if they are instead only pseudonatural we get a biadjunction. 
 On the domain, once chosen a weak inverse $P^{-1}_\sT$ of $P_\sT$, we have that the equivalence is given by $P^{-1}_\sT\circ-\circ P_\sS$, hence stricly natural. 
 On the other hand, also in this case we have that the equivalence on the codomain is only pseudonatural  (since it is given by the property of the weak 2-dimensional factorisation system), hence we get a biadjunction.




\end{proof}

\begin{cor}
The sketch morphisms $P$ and $R$ defined in \Cref{costr:left-normalisation} are left Morita equivalences.
\end{cor}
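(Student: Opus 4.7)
The plan is to combine Proposition~\ref{prop:rel-adj-left-class} (asserting that $J_\sS$ is a left Morita equivalence) with the biadjunction of Theorem~\ref{thm:left-norm-adj} and the trivial observation that every left sketch is left normal. This reduces the whole statement to a 2-out-of-3 argument on the induced precomposition functors.

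First, I would show that $P_\sS$ is a left Morita equivalence. Every left sketch $\sM$ is left normal: by condition (2) of Definition~\ref{def:left-sketch} all diagrams in $\skC_\sM$ are genuine colimit cocones, which is precisely what left normality on the colimit side requires (the limit side being empty of specifications does not matter). Hence Theorem~\ref{thm:left-norm-adj}, applied to $\sT=\sM$, provides an equivalence
\[
P_\sS^{\ast}\colon\leftnskt(\tilde{\sS},\sM)\;\simeq\;\skt(\sS,U^{ln}\sM).
\]
Since $\leftnskt$ is a full sub-$2$-category of $\skt$, the left-hand side equals $\skt(\tilde{\sS},\sM)$ and the right-hand side equals $\skt(\sS,\sM)$, so $P_\sS^{\ast}\colon\skt(\tilde{\sS},\sM)\to\skt(\sS,\sM)$ is an equivalence for every left sketch $\sM$. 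By Definition~\ref{def:strong-morita-equiv} this is exactly the statement that $P_\sS$ is a left Morita equivalence.

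Second, I would deduce that $R_\sS$ is a left Morita equivalence from the factorisation $J_\sS\cong R_\sS\circ P_\sS$ supplied by Construction~\ref{costr:left-normalisation}. For any left sketch $\sM$, this yields a (pseudo)commuting triangle
\[
J_\sS^{\ast}\;\cong\;P_\sS^{\ast}\circ R_\sS^{\ast}\colon\skt(\hat{\sS},\sM)\longrightarrow\skt(\sS,\sM).
\]
By Proposition~\ref{prop:rel-adj-left-class} (together with its Morita-small extension in Remark~\ref{correctionlarge}) the functor $J_\sS^{\ast}$ is an equivalence, and by the previous step $P_\sS^{\ast}$ is an equivalence. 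Applying the 2-out-of-3 property for equivalences of categories forces $R_\sS^{\ast}\colon\skt(\hat{\sS},\sM)\to\skt(\tilde{\sS},\sM)$ to be an equivalence as well, so $R_\sS$ is a left Morita equivalence.

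There is no genuine obstacle here: the only delicate point is size bookkeeping in the Morita-small case, but this is already handled by Remark~\ref{correctionlarge}, which extends Proposition~\ref{prop:rel-adj-left-class} to Morita small sketches and guarantees that all the hom-$2$-categories appearing above live in the right size universe. In particular, the same argument applies verbatim both in $\skt$ and in $\Skt^{\mathsf{M}}$.
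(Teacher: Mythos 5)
Your proof is correct and follows essentially the same route as the paper: $P$ is a left Morita equivalence because every left sketch is left normal (so Theorem~\ref{thm:left-norm-adj} applies with a left sketch as target), and $R$ then follows by two-out-of-three from $J_\sS\cong R_\sS P_\sS$ together with Proposition~\ref{prop:rel-adj-left-class}. The only cosmetic slip is your aside that the limit side of a left sketch is ``empty of specifications'' --- it need not be (e.g.\ a topos carries all finite limit cones in $\skL$) --- but left normality only constrains the cocone class, so the argument is unaffected.
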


\begin{proof}
Since any left $\sL$ is in particular left normal, it follows that $P$ is a left Morita equivalence. Because both $J_{\sS}$ and $P$ are both left Morita equivalences and $J_{\sS}=RP$, it follows that $R$ must be a left equivalence as well.
\end{proof}

\black 

\subsection{Morita small left sketches}

Many properties of the $2$-categories of Morita-small left sketches have already implicitly emerged in the previous subsection. In this subsection we shall state them. 

\begin{prop}
The underlying category of a Morita small left sketch is locally presentable.
\end{prop}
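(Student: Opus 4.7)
The plan is to reduce the statement to the case of a small sketch, where the left classifier construction exhibits the underlying category as a reflective orthogonality class in a presheaf category. Given a Morita small left sketch $\sS$, I would fix a left Morita equivalence $j\colon \sD \to \sS$ with $\sD$ a small sketch. By \Cref{prop:left-strong-iff-hat-inv} the induced morphism $\hat{j}\colon \hat{\sD} \to \hat{\sS}$ is an equivalence, and by \Cref{rmk:left-class-stable} we have $\hat{\sS} \simeq \sS$ since $\sS$ is already a left sketch. Composing these equivalences yields an equivalence of underlying categories $\cS \simeq \hat{\cD}$.

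Next, I would unpack \Cref{costr:left-skt-class} for the small sketch $\sD$. Since $\cD$ is essentially small, $\Psh(\cD)$ is locally finitely presentable, and since $\skC_\sD$ is a small class of colimit cocones, the associated class of orthogonality maps $\mathcal{H}$ in $\Psh(\cD)$ is a small set of morphisms. The subcategory $\hat{\cD} = \mathcal{H}^\perp$ is then a reflective subcategory of a locally presentable category carved out by orthogonality with respect to a small set of morphisms, and such orthogonality classes are themselves locally presentable by \cite[Theorem~1.39]{adamek_rosicky_1994}. Transporting local presentability along the equivalence $\cS \simeq \hat{\cD}$ gives the conclusion.

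The main delicate point is precisely the smallness of the orthogonality set. If one attempted to construct $\hat{\sS}$ directly from $\sS$ as in \Cref{correctionlarge}, the naive class of orthogonality maps inside $\mathcal{P}(\cS)$ would \emph{a priori} be a proper class and the standard machinery of \cite{adamek_rosicky_1994} would not apply off the shelf. The role of the Morita-small assumption is exactly to supply a small presentation $\sD$ of $\sS$ so that all orthogonality conditions live inside the genuinely locally presentable category $\Psh(\cD)$; this is the same trick already used in \Cref{correctionlarge} to make the classifier construction work in the Morita-small setting, and here it is leveraged one step further to import local presentability back to $\cS$.
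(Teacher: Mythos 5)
Your proof is correct and follows essentially the same route as the paper: reduce to the small presentation $\sD$ via the Morita equivalence, identify $\cS\simeq\hat{\cS}\simeq\hat{\cD}$, and recognise $\hat{\cD}$ as a small orthogonality class in the locally presentable category $\Psh(\cD)$. You are in fact slightly more explicit than the paper in invoking \Cref{rmk:left-class-stable} for the step $\sS\simeq\hat{\sS}$, which the paper leaves implicit.
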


\begin{proof}
 Let $\sS$ be a Morita-small sketch and let us denote with $\sD$ and $j\colon \sD \to \sS$ the small sketch and morphism of sketches providing the Morita equivalence. As we noted in \Cref{correctionlarge}, it follows from \Cref{prop:left-strong-iff-hat-inv} that $\hat{\cS}$ coincides with $\hat{\cD}$. Now, by \Cref{costr:left-skt-class} this is a small orthogonality class in a locally presentable category, and thus is it locally presentable by \cite[1.40]{adamek_rosicky_1994}.
\end{proof}

\begin{rem}
More can be said: because $\lambda$-presentable objects are closed under $\lambda$-small colimits, if $\lambda$ is a cardinal that bounds the cardinality of the objects in every diagram in the cocones in $\sS$, then $\sT$ will be locally $\lambda$-presentable. Similar results are discussed in \cite[Section~5]{di2020gabriel} for the case of topoi and sites of definition.
\end{rem}

\begin{cor}
There is an obvious forgetful functor $\mathsf{LSkt}^{\mathsf{M}} \to \mathsf{Pres}$, the $2$-category of locally presentable categories, cocontinuous functors and natural transformations.
\end{cor}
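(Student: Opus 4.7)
The plan is to define the forgetful 2-functor on objects, 1-cells, and 2-cells and then check that each assignment is well-defined.

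On objects, send a Morita small left sketch $\sS=(\cS,\skL_\sS,\skC_\sS)$ to its underlying category $\cS$. This lands in $\mathsf{Pres}$ by the previous proposition. On 1-cells, send a morphism of left sketches $F\colon\sS\to\sT$ to its underlying functor $UF\colon\cS\to\cT$, and on 2-cells send each sketch natural transformation to itself (using that $\Skt$ and $\Cat$ share the same 2-cells, as observed in \Cref{rem:csk-lsk-corefl-in-skt}).

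The only nontrivial point is that the underlying functor of a morphism of left sketches is cocontinuous. By definition of left sketch (\Cref{def:left-sketch}), the class $\skC_\sS$ contains \emph{all} essentially small colimit diagrams of $\cS$, and similarly for $\skC_\sT$. Given any essentially small diagram $d\colon D\to\cS$ with colimit cocone $j\colon d\Rightarrow \Delta(s)$, we have $j\in\skC_\sS$, so by the definition of morphism of sketches $Fj$ is naturally isomorphic to a cocone in $\skC_\sT$. Since $\skC_\sT$ consists of genuine colimit cocones (again by the definition of left sketch), $Fj$ is itself a colimit cocone, showing that $F$ preserves essentially small colimits. Hence $UF$ is cocontinuous, as required.

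Finally, (strict) 2-functoriality is immediate: composition and identities of sketch morphisms are defined at the level of underlying functors, and likewise for natural transformations. This produces the desired 2-functor $\mathsf{LSkt}^{\mathsf{M}}\to\mathsf{Pres}$. I do not anticipate a real obstacle here; the statement is essentially a bookkeeping consequence of the previous proposition together with the design choice built into \Cref{def:left-sketch}, whose entire purpose is to force morphisms to be cocontinuous.
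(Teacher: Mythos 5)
Your proof is correct and is exactly the verification the paper leaves implicit (the corollary is stated without proof, as an "obvious" consequence of the preceding proposition). The only point worth flagging is that your step "since $\skC_\sT$ consists of genuine colimit cocones" relies on reading \Cref{def:left-sketch} as saying $\skC_\sT$ contains \emph{only} colimit cocones, which is indeed the paper's intended reading (it later asserts that left sketches are trivially left normal), so no gap remains.
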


\section{Rounded sketches}\label{sec:rounded-skt}

In this section we introduce the notion of (left) rounded sketch. In terms of intuition, (left) rounded sketchs are a sketch-version of the notion of site from topos theory and are designed to encode Giraud-like axioms in the left sketch classifier. From a more technical point of view, what we do is -- in a sense -- to import the technology of lex colimits to the theory of sketches.







\begin{defn}[(Left) rounded sketch]\label{def:rounded-skt}
Let $\sS=(\cS,\skL,\skC)$ be a sketch. We say that $\sS$ is left rounded when, in the construction of $\hat{\sS}$, the reflection $L$ transforms $\skL$ into limit diagrams, i.e. $J_\sS$ sends cones in $\skL$ to limit diagrams.

\[\begin{tikzcd}[ampersand replacement=\&]
	\cS \& {\hat{\cS}} \\
	{\Psh(\cS)}
	\arrow["{J_\sS}", from=1-1, to=1-2]
	\arrow["\yo"', from=1-1, to=2-1]
	\arrow["L"', from=2-1, to=1-2]
\end{tikzcd}\]
\end{defn}

\begin{rem}(Right rounded sketches exist too!)
    As in \Cref{caveatrighutskt}, for an appropriate variant of \Cref{sec:left-skt-class}, it would be possible to define right rounded sketches. Since we have no interesting examples of these objects, and because the theory would be entirely dual to that of left rounded sketches, we choose to stick to the theory of left rounded sketches and make no further mention of right rounded sketches.
\end{rem}

We shall now proceed to better understand this notion via a number of examples and contextualization with similar ideas in the literature. Before doing that though, we start with two propositions that will hopefully demystify the technical nature of the definition. The reader that may prefer to see an example first can jump to \Cref{roundedexample}.

\begin{prop}[]\label{prop:left-class-of-rounded-is-normal}
    A sketch $\sS$ is rounded if and only if its left classifier $\hat{\sS}$ is normal. 
\end{prop}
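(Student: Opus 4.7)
The plan is to observe that this is essentially a direct unpacking of definitions, once we recall the sketch structure on $\hat{\sS}$ given in \Cref{costr:left-skt-class}. Normality of a sketch (see \Cref{skt}) has two halves: every cone specification is a limit cone, and every cocone specification is a colimit cocone. I will verify that, for $\hat{\sS}$, the second half is automatic, so normality collapses to the first half, which is precisely the definition of roundedness.

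First, I would recall that by construction $\skC_{\hat{\sS}}$ consists of all essentially small colimit diagrams in $\hat{\cS}$, so every cocone in $\skC_{\hat{\sS}}$ is automatically of colimit form. Thus $\hat{\sS}$ is normal if and only if every cone in $\skL_{\hat{\sS}}$ is a limit cone in $\hat{\cS}$. Next, I would recall that $\skL_{\hat{\sS}}=\skL^m_{J_\sS}$, i.e.\ the minimal sketch structure making $J_\sS$ a sketch morphism, as in \Cref{prop:min-max}. By the explicit description given in the proof of that proposition, $\skL_{\hat{\sS}}$ is precisely the class of cones of the form $J_\sS\sigma$ with $\sigma \in \skL_\sS$.

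Combining these two observations, $\hat{\sS}$ is normal if and only if $J_\sS \sigma$ is a limit cone in $\hat{\cS}$ for every $\sigma \in \skL_\sS$. But this is exactly what \Cref{def:rounded-skt} says it means for $\sS$ to be left rounded. Hence the two conditions are equivalent.

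I do not expect any genuine obstacle here: the statement is tautological once one has written out the definitions of $\skL_{\hat{\sS}}$ and $\skC_{\hat{\sS}}$ side by side. The only mild care required is to notice that the ``minimal'' structure really does mean every element of $\skL_{\hat{\sS}}$ is literally (not merely isomorphic to) a $J_\sS\sigma$, so the limit condition on $\skL_{\hat{\sS}}$ transfers with no loss to the image of $J_\sS$.
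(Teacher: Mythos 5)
Your proof is correct and follows essentially the same route as the paper's: reduce normality of $\hat{\sS}$ to right normality (the cocone half being automatic since $\skC_{\hat{\sS}}$ consists of all small colimit diagrams), then identify $\skL_{\hat{\sS}}=\{J_\sS\pi\mid\pi\in\skL_\sS\}$ and observe that right normality is literally the definition of roundedness. Your closing remark about the minimal structure consisting of the cones $J_\sS\sigma$ themselves (not merely cones isomorphic to them) is a correct reading of \Cref{costr:left-skt-class} and \Cref{prop:min-max}.
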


\begin{proof}
    Since any left sketch is in particular left normal, it suffices to show that a sketch is rounded if and only if its left classifier is right normal. We recall that the limit part of $\hat{\sS}$ is defined as
$$\skL_{\hat{\sS}}:=\lbrace J_\sS\pi\,\mid\,\pi\in\skL_\sS\rbrace.$$
$\hat{\sS}$ is right normal if all the specified cones are of limit form. Therefore, $\hat{\sS}$ is right normal if and only $J_\sS$ sends cones in $\skL_\sS$ to limit diagrams, which is exactly the definition of rounded sketch. 
\end{proof}

\begin{prop}
If $\sS$ is a rounded sketch, then its left normalization $\tilde{\sS}$ is normal. Moreover, if $R_\sS$ creates limits, the converse is true.
\end{prop}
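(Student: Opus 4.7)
The plan is to exploit the (es, ff) factorisation $J_\sS = R_\sS \circ P_\sS$ of \Cref{costr:left-normalisation}, together with the fact that $\hat{\sS}$ is a left sketch by \Cref{costr:left-skt-class} and that $R_\sS$ is fully faithful (hence reflects both limits and colimits). Throughout, recall that $\tilde{\sS}$ carries the minimal sketch structure making $P_\sS$ a sketch morphism, so
\[\skL_{\tilde{\sS}} = \{P_\sS\pi : \pi\in\skL_\sS\}, \quad \skC_{\tilde{\sS}} = \{P_\sS\delta : \delta\in\skC_\sS\}.\]
Normality of $\tilde{\sS}$ thus amounts to every such $P_\sS\pi$ being a limit cone and every such $P_\sS\delta$ being a colimit cocone in $\tilde{\cS}$.

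First I would observe that, regardless of any hypothesis on $\sS$, the cocone part is automatic. Indeed, by \Cref{costr:left-skt-class} the morphism $J_\sS$ sends every $\delta\in\skC_\sS$ to a colimit diagram in $\hat{\cS}$. Since $R_\sS(P_\sS\delta) = J_\sS\delta$ is a colimit in $\hat{\cS}$ and $R_\sS$ is fully faithful (hence reflects colimits on diagrams in its image), it follows that $P_\sS\delta$ is a colimit in $\tilde{\cS}$. Thus the colimit clause of normality holds for $\tilde{\sS}$ with no assumption on $\sS$.

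For the forward direction, assume $\sS$ is rounded. Then by \Cref{def:rounded-skt}, $J_\sS\pi$ is a limit diagram in $\hat{\cS}$ for every $\pi\in\skL_\sS$. Writing $J_\sS\pi = R_\sS(P_\sS\pi)$ and again using that $R_\sS$ reflects limits (because it is fully faithful), we conclude that $P_\sS\pi$ is a limit in $\tilde{\cS}$. Combined with the previous paragraph, this shows $\tilde{\sS}$ is normal.

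For the converse, assume $\tilde{\sS}$ is normal and that $R_\sS$ creates limits. Then for each $\pi\in\skL_\sS$, the cone $P_\sS\pi$ is a limit in $\tilde{\cS}$. Since creation of limits implies preservation of limits, $R_\sS$ carries this to a limit in $\hat{\cS}$, i.e.\ $J_\sS\pi = R_\sS(P_\sS\pi)$ is a limit diagram. By \Cref{def:rounded-skt} this precisely says that $\sS$ is rounded. The main subtlety to double-check is that the notion of ``creates limits'' being used is the standard one implying preservation; once that is settled, the proof is essentially a bookkeeping argument bridging $\sS$, $\tilde{\sS}$ and $\hat{\sS}$ through the factorisation $J_\sS = R_\sS P_\sS$.
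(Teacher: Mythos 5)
Your proof is correct and follows essentially the same route as the paper: factor $J_\sS \cong R_\sS P_\sS$, use full faithfulness of $R_\sS$ to reflect limits for the forward direction, and creation (hence, since $\hat{\cS}$ is complete, preservation) of limits by $R_\sS$ for the converse. Your explicit check that the cocone part of normality is automatic is a detail the paper merely asserts (it says "since $\tilde{\sS}$ is left normal" without argument), so spelling it out via reflection of colimits along the fully faithful $R_\sS$ is a welcome addition rather than a deviation.
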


\begin{proof}
    We recall that given a sketch $\sS$, its left normalisation $\tilde{\sS}$ is defined through the es-ff factorisation of $J_\sS\cong R_\sS P_\sS\colon\cS\to\hat{\cS}$ (see \Cref{costr:left-normalisation}). In particular, the sketch structure on $\tilde{\sS}$ is the minimal structure making the essentially surjective on objects functor $P\colon\cS\to\Tilde{\cS}$ a sketch morphism. Moreover, since $\tilde{\sS}$ is left normal, it suffices to show that any diagram $\pi\in\skL_{\tilde{\sS}}$ is a limit diagram (i.e. $\tilde{\sS}$ is right normal). 

    Using the characterisation of rounded sketches in \Cref{prop:left-class-of-rounded-is-normal} and the fact that $R$ is a sketch morphism, we conclude that $R_\sS\pi$ is a limit diagram. Therefore, since $R$ is fully faithful and so it reflects limits, $\pi$ itself must be a limit diagram.  
    
    Moreover, let us consider the situation when $R_\sS$ creates limits and $\tilde{\sS}$ is left normal. We want to show that $J_\sS\colon\cS\to\hat{\cS}$ sends any cone in $\skL_\sS$ to a limit cone in $\hat{\cS}$c (i.e. $\sS$ is left rounded). We recall that $J_\sS\cong R_\sS P_\sS$, so any  $\pi\in\skL_\sS$ is sent by $J_\sS$ to $R_\sS P_\sS\pi$. Now, since $P_\sS$ is a sketch morphism and $\tilde{\sS}$ is left normal by hypothesis, then $P_\sS\pi$ must be a limit. 
    Therefore, since $R_\sS$ creates limits, $R_\sS P_\sS\pi\cong J_\sS\pi$ is a limit as well as required. 
\end{proof}

\begin{rem}
    In all the examples that we are familiar with, $R$ does indeed create limits. In the next subsection (\Cref{roundedexample}) we will display such examples.
\end{rem}






\begin{prop}\label{prop:rounded-then-left-class-rounded}
    If  $\mathcal{S}$ is a rounded sketch, then its left classifier $\hat{\mathcal{S}}$ is rounded.
\end{prop}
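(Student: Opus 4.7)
The plan is to combine the two preceding structural results about left classifiers that this proposition is clearly set up to exploit: the characterization of roundedness via normality of $\hat{\sS}$ (\Cref{prop:left-class-of-rounded-is-normal}) and the stability of left sketches under the classifier construction (\Cref{rmk:left-class-stable}). The rest is a one-line observation about limit preservation under equivalences.

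First I would unfold the definitions. We must show that $J_{\hat{\sS}}\colon \hat{\sS} \to \widehat{\hat{\sS}}$ carries every cone in $\skL_{\hat{\sS}}$ to a limit diagram in the underlying category of $\widehat{\hat{\sS}}$. By hypothesis $\sS$ is rounded, so \Cref{prop:left-class-of-rounded-is-normal} tells us that $\hat{\sS}$ is normal; in particular, every cone in $\skL_{\hat{\sS}}$ is already a limit cone in $\hat{\cS}$.

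Next I would invoke \Cref{rmk:left-class-stable}: since $\hat{\sS}$ is a left sketch, its left classifier is equivalent to itself via $J_{\hat{\sS}}$, so $J_{\hat{\sS}}$ is an equivalence of the underlying categories. Equivalences of categories preserve all (existing) limits, hence $J_{\hat{\sS}}$ sends the limit cones of $\hat{\cS}$ to limit cones of $\widehat{\hat{\cS}}$. Combining this with the previous paragraph, every cone in $\skL_{\hat{\sS}}$ is sent by $J_{\hat{\sS}}$ to a limit cone, which by \Cref{def:rounded-skt} is exactly the statement that $\hat{\sS}$ is rounded.

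I do not anticipate any real obstacle: the argument is essentially a two-step diagram chase that consumes the two preparatory lemmas. The only mild subtlety is the size-theoretic one implicit in invoking \Cref{rmk:left-class-stable} when $\hat{\sS}$ is only Morita small rather than small, but this is handled by the \emph{Morita small} version of the construction recorded in \Cref{correctionlarge}, and the equivalence $\widehat{\hat{\sS}} \simeq \hat{\sS}$ holds there for the same Yoneda-style reason.
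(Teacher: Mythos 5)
Your proof is correct and follows essentially the same route as the paper: both arguments combine \Cref{prop:left-class-of-rounded-is-normal} (to get that the specified cones of $\hat{\sS}$ are limit cones) with \Cref{rmk:left-class-stable} (to identify $\widehat{\hat{\sS}}$ with $\hat{\sS}$). The only cosmetic difference is that the paper notes $J_{\hat{\sS}}$ is isomorphic to the identity, so roundedness of $\hat{\sS}$ literally reduces to its right normality, whereas you use the marginally weaker fact that $J_{\hat{\sS}}$ is a limit-preserving equivalence; both are fine.
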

\begin{proof}
Let us start by identifying what being rounded means for $\hat{\mathcal{S}}$. Recall, as in \Cref{rmk:left-class-stable}, that the Yoneda embedding of $\hat{\mathcal{S}}$ into its category of small presheaves has a left adjoint,
\[\begin{tikzcd}[ampersand replacement=\&]
	{\mathcal{P}(\hat{\sS})} \& {\hat{\sS}.}
	\arrow[""{name=0, anchor=center, inner sep=0}, "{\yo_\sS}"', shift right=3, from=1-2, to=1-1]
	\arrow[""{name=1, anchor=center, inner sep=0}, "L"', shift right=3, from=1-1, to=1-2]
	\arrow["\top"{description}, draw=none, from=1, to=0]
\end{tikzcd}\]
This is a rephrasing of the fact that $\hat{\mathcal{S}}$ is cocomplete (see \cite[Proposition~2.2]{garner2012lex}). It is easy to see that the orthogonality condition in \Cref{costr:left-skt-class}, for the case of $\hat{\mathcal{S}}$ is precisely that given by the adjunction above, so that $\hat{\hat{\mathcal{S}}}$ is nothing but $\hat{\mathcal{S}}$. Thus -- because in this case $J_{\hat{\mathcal{S}}}$ is naturally isomorphic to the identity of $\hat{\mathcal{S}}$ -- to be rounded, coincides with the requirement $\hat{\mathcal{S}}$ is (right) normal, which follows on the spot from \Cref{prop:left-class-of-rounded-is-normal}.
\end{proof}


\begin{rem}[Compatibility à la Lack-Tendas] \label{tendaslack}
By inspecting the definition of left roundedness one can see a similarity with that of \textit{compatibility} by Lack and Tendas \cite[Definition~3.5]{lack2024accessible}. It is indeed true that left roundedness encodes the statement that limit cones in $L$ are \textit{compatible} with colimits cocones in $C$, in the language of sketches. The precise sense in which this statement is true is formalized by the theory of lex colimits (which is generalised by the notion of compatibility), that we shall discuss in the next subsection.
\end{rem}



\subsection{Lex colimits are left rounded}

This subsection is devoted to the most common example of left roundedness, which is given by the general phenomenology of lex colimits. In particular we study a very specific kind of sketch, these will be categories $\cS$ with finite limits and some specified family of cocones. Thus, the kind of sketch structure we are confronted with is of the form $\sS=(\cS,\skL,\skC)$, where $\skL$ contains precisely all finite limit diagrams.

The general aim of the subsection is to show that the theory developed by Garner and Lack in \cite{garner2012lex} embeds in that of rounded sketches. Before doing so, we start with a simpler example that requires less technology and whose behavior contains the blueprint of the main proposition of the subsection (\Cref{prop:phi-ex-round}).




\begin{exa}[Lex sites are rounded sketches] \label{roundedexample}
Recall that a lex site $(C,J)$ is the data of a lex category $C$, equipped with a Grothendieck pretopology $J$. Following \cite[D2.1.4(g)]{Sketches}, we can transform every lex site into a lex sketch, so that this construction produces a $2$-functor, $\mathsf{site}_{\text{lex}} \to \mathsf{skt}_{\text{lex}}.$ Now, recall the discussion in \Cref{leftsketchclasstopos}, where we observed that \Cref{costr:left-skt-class} is nothing but computing the topos of sheaves over $(C,J)$. This means, in particular, that in the diagram below,

\[\begin{tikzcd}
	\cS && {\Psh(\cS)} \\
	&& {\hat{\cS}}
	\arrow["\yo", from=1-1, to=1-3]
	\arrow[""{name=0, anchor=center, inner sep=0}, "I"', curve={height=12pt}, hook', from=2-3, to=1-3]
	\arrow[""{name=1, anchor=center, inner sep=0}, "L"', curve={height=12pt}, from=1-3, to=2-3]
	\arrow[""{name=2, anchor=center, inner sep=0}, "{J_\cS}"', from=1-1, to=2-3]
	\arrow["\dashv"{description}, draw=none, from=1, to=0]
	\arrow["{:=}"{marking, allow upside down}, draw=none, from=2, to=1-3]
\end{tikzcd}\]

the reflector $L$ is left exact, because it coincides with the sheafification functor, which is a rephrasing of the fact that the sketch associated to $(C,J)$ is rounded. The diagram below witnesses the fact that the inclusion of lex sites actually lands in rounded (lex) sketches.

\[\begin{tikzcd}
	& {r\mathsf{skt}_{\text{lex}}} \\
	{\mathsf{site}_{\text{lex}}} & {\mathsf{skt}_{\text{lex}}}
	\arrow["i", from=2-1, to=2-2]
	\arrow[from=1-2, to=2-2]
	\arrow[dashed, from=2-1, to=1-2]
\end{tikzcd}\]
\end{exa}

\begin{cor} \label{topostologos}
Also the $2$-functor $\mathsf{Topoi}^\circ \to \mathsf{Skt}$ lands in rounded sketches.
\end{cor}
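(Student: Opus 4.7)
The plan is to exploit the very rigid structure of a topos when viewed as a sketch. Recall from Example \ref{firsttimetopos} that the canonical sketch structure on a topos $\mathcal{E}$ has $\skL_\mathcal{E}$ consisting of all finite limit cones and $\skC_\mathcal{E}$ consisting of all (small) colimit cocones. I would begin by observing two immediate features of this sketch:

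\begin{itemize}
\item It is a \emph{left sketch} in the sense of Definition \ref{def:left-sketch}: the underlying category $\mathcal{E}$ is cocomplete, and $\skC_\mathcal{E}$ contains all small colimit cocones by construction.
\item It is \emph{normal}: every specified cone is a (finite) limit cone and every specified cocone is a colimit cocone.
\end{itemize}

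Given these two observations, there are (at least) two equivalent routes to the conclusion. The first, which I would adopt as the main argument, goes via Remark \ref{rmk:left-class-stable}: since $\mathcal{E}$ (with its canonical sketch structure) is already a left sketch, the unit $J_\mathcal{E} : \mathcal{E} \to \hat{\mathcal{E}}$ is an equivalence of categories. In particular, $J_\mathcal{E}$ preserves all limits, so every cone in $\skL_\mathcal{E}$ -- being a genuine finite limit cone in $\mathcal{E}$ -- is sent to a limit cone in $\hat{\mathcal{E}}$. By Definition \ref{def:rounded-skt}, this is exactly what it means for $\mathcal{E}$ to be a rounded sketch.

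As a sanity check, one can re-derive the same fact through Proposition \ref{prop:left-class-of-rounded-is-normal}, which asserts that a sketch is rounded precisely when its left classifier is normal. Since $\hat{\mathcal{E}} \simeq \mathcal{E}$ and $\mathcal{E}$ is normal by design, the classifier $\hat{\mathcal{E}}$ is normal, hence $\mathcal{E}$ is rounded. No step here looks delicate: the only conceptual input is the stability property from Remark \ref{rmk:left-class-stable}, which has already been established. One minor thing to verify carefully is simply that the $2$-functor $\mathsf{Topoi}^\circ \to \mathsf{Skt}$ of Example \ref{firsttimetopos} really does factor through $r\mathsf{Skt}$, i.e. that the argument above is compatible with the variance (choice of inverse images of geometric morphisms), but this is immediate since the argument is pointwise on objects and $1$-cells are not involved in the roundedness condition.
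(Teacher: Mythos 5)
Your argument is correct, but it takes a genuinely different route from the paper. The paper deduces \Cref{topostologos} from \Cref{roundedexample} by regarding a topos as its own canonical (lex) site, so the essential input there is the classical fact that sheafification is left exact; this also leans on the identification, asserted in \Cref{firsttimetopos}, of the sketch structure coming from the canonical topology with the one using all colimit cocones. You instead observe that the canonical sketch on a topos is a \emph{normal left} sketch, invoke \Cref{rmk:left-class-stable} to get that $J_{\mathcal{E}}\colon\mathcal{E}\to\hat{\mathcal{E}}$ is an equivalence, and conclude that it sends the specified cones --- which are genuine finite limit cones --- to limit cones, which is \Cref{def:rounded-skt} verbatim. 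This is more elementary and strictly more general: it shows that \emph{any} right-normal left sketch is rounded, with no exactness input, and so would also dispatch $\lambda$-topoi (\Cref{kapparounded}, where the paper has to appeal to the $\lambda$-lexness of sheafification) and frames in one stroke. What the paper's route buys in exchange is the explicit factorization through $\mathsf{MSite}_{\mathrm{lex}}$ and the connection to the lex-colimits/sheafification picture that the rest of \Cref{sec:rounded-skt} is built around. Your closing remark about variance is also right: roundedness is a property of objects and $r\Skt$ is full, so nothing needs to be checked on $1$-cells.
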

\begin{proof}
This is a corollary of the previous example, when we consider a topos as a canonical site for itself equipped with the canonical topology.

\[\begin{tikzcd}
	& {\mathsf{MSite}_{\text{lex}}} \\
	{\mathsf{Topoi^\circ}} && {\mathsf{Skt}_{\text{lex}}}
	\arrow["i", from=1-2, to=2-3]
	\arrow[from=2-1, to=2-3]
	\arrow["{J_{\text{can}}}", from=2-1, to=1-2]
\end{tikzcd}\]
\end{proof}



\begin{exa}[$\lambda$-Topoi à la Espíndola as left rounded sketches]\label{kapparounded}
Similarly to \Cref{topostologos} and \Cref{kappaleft}, the $2$-functor $\mathsf{Topoi}_\lambda^\op \to \Skt$ lands in rounded sketches. The proof of this fact technically does not appear in the literature, so we shall say something about it. Similarly to the standard case, a $\lambda$-topos is a $\lambda$-site for itself, this follows from the usual proof (\cite[C2.3.9]{Sketches} and \cite[C2.2.7]{Sketches}) the only $\lambda$-aspect of it is whether the canonical topology is a $\lambda$-topology. That is precisely the definition of a $\lambda$-topos. Then, the fact that the sheafification functor is $\lambda$-lex (which ensures that the sketch associated to a $\lambda$-topos is rounded) follows \cite[Prop 3.6]{espindola2023every}.
\end{exa}

We are now ready to discuss the theory of lex colimits. In the construction below we shall recall what is needed to deliver our result, but we assume some familiarity with the content of \cite{garner2012lex}.

\begin{constr}[From exact categories to lex sketches]\label{funct-phi-ex-to-skt}
Let $\Phi$ be a class of lex-weights, i.e. a collection of functors $\phi\colon\cD^{op}\to\Cat$, with each $\cD$ small and finitely complete. We will denote with $\PhiEx$ the category of $\Phi$-exact categories (see \cite[Section~3]{garner2012lex}). 
Similarly to \Cref{doctrines} we can construct a 2-functor as below.
$$I_\Phi\colon\PhiEx\to\skt_{\text{lex}}$$
Precisely, given a $\Phi$-exact category $\cC$ we can send this to the sketch $I_\Phi\cC$ defined:
\begin{itemize}
    \item as underlying category we take $\cC$ itself;
    \item we define the limit part $\skL_{I_\Phi\cC}$ as all (essentially small) limit diagrams;
    \item as colimit part $\skC_{I_\Phi\cC}$  we take all the colimit diagrams specified by $\Phi$, i.e. we define $\skC_{I_\Phi\cC}$ as follows
    $$\skC_{I_\Phi\cC}:=\lbrace \textrm{El}(\varphi)\to\cD\xrightarrow{D}\cC\,\mid\,\textrm{with}\,\cD\,\textrm{lex and}\,\varphi\in\Phi\rbrace.$$ 
\end{itemize}
We recall that a morphism $F\colon\cC\to\cD$ in $\PhiEx$ (see \cite[Page~12]{garner2012lex}) is a functor which preserves finite limits and the colimits specified by $\Phi$. 
This is the same as saying that the functor $F$ is a sketch morphism from $I_\Phi\cC$ to $I_\Phi\cD$. 


\end{constr}

\begin{rem}We underline that, even if there is a forgetful functor $\PhiEx\to\lex$, the 2-functor $I_\Phi$ defined in \Cref{funct-phi-ex-to-skt} does not factor through the functor $\lex\to\skt$ defined in \Cref{doctrines}. The situation is -- by no coincidence -- identical to that displayed in point (e) of \Cref{doctrines}.

\[\begin{tikzcd}[ampersand replacement=\&]
	\PhiEx \&\& \lex \\
	\& \skt
	\arrow[""{name=0, anchor=center, inner sep=0}, from=1-3, to=2-2]
	\arrow[""{name=1, anchor=center, inner sep=0}, "{I_{\Phi}}"', from=1-1, to=2-2]
	\arrow[from=1-1, to=1-3]
	\arrow[shorten <=15pt, shorten >=15pt, Rightarrow, from=0, to=1]
\end{tikzcd}\]
\end{rem}

\begin{prop}\label{prop:phi-ex-round}
The $2$-functor $I_\Phi\colon\PhiEx\to\skt$ defined in \Cref{funct-phi-ex-to-skt} factors through the 2-category $\roundSkt$ of rounded sketches.
\end{prop}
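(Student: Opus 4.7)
The plan is to establish something stronger than roundedness itself: namely, that for any $\Phi$-exact category $\cC$, the Yoneda embedding $\yo\colon\cC\to\Psh(\cC)$ already factors (up to natural isomorphism) through the reflective subcategory $\hat{\cC}$ of \Cref{costr:left-skt-class}. Once this is done, the reflector $L$ acts as the identity on representables, so $J_{I_\Phi\cC}=L\circ\yo\cong\yo$ as functors $\cC\to\hat{\cC}$. Because $\yo$ preserves all small limits in $\Psh(\cC)$, and because a limit in the reflective subcategory $\hat{\cC}$ of a diagram that already lies in $\hat{\cC}$ coincides with the limit computed in the ambient presheaf category, it follows that $J_{I_\Phi\cC}$ sends every cone $\pi\in\skL_{I_\Phi\cC}$---an essentially small limit cone in $\cC$ by construction---to a limit cone in $\hat{\cC}$. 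This is exactly the condition for roundedness in \Cref{def:rounded-skt}.

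The technical heart of the argument is therefore to verify that every representable $\yo s$ lies in $\hat{\cC}=\mathcal{H}^{\perp}$. I would unwind the orthogonality condition exactly as in the proof of \Cref{turnaroundsketch}: a presheaf $P$ is orthogonal to $\rho_\delta\colon\text{colim}\,\yo d\to\yo s$ if and only if $P$ sends the specified cocone $\delta$ to a limit cone in $\Set$. In our situation, $\delta$ is of the form $\text{El}(\varphi)\to\cD\xrightarrow{D}\cC$ exhibiting $s$ as the $\Phi$-colimit of $D$ inside the $\Phi$-cocomplete category $\cC$. For $P=\yo s'$ representable, converting the conical limit over $\text{El}(\varphi)$ into the corresponding weighted limit, the orthogonality condition becomes the identification $\cC(s,s')\cong\{\varphi,\cC(D-,s')\}$, which is precisely the defining universal property of the $\Phi$-colimit $s$ in $\cC$. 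It therefore holds automatically.

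The only mildly delicate step is the translation between the conical presentation of $\Phi$-colimits used in \Cref{funct-phi-ex-to-skt} (via categories of elements) and the weighted presentation of \cite{garner2012lex}; this is a standard co-Yoneda-style computation rather than a genuine obstacle. I would also remark that the argument uses only the $\Phi$-cocompleteness of $\cC$: the lex and $\Phi$-exactness hypotheses play no role in the roundedness of $I_\Phi\cC$ itself, and enter only in ensuring that $I_\Phi$ is well-defined on morphisms, as discussed in \Cref{funct-phi-ex-to-skt}.
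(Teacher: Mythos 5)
Your proof is correct, but it follows a genuinely different route from the paper's. The paper identifies $\widehat{I_\Phi\cC}$ with Garner--Lack's $P_\Phi\cC$ via \Cref{turnaroundsketch} and then imports their Proposition~7.3, which asserts that the reflector $L\colon\Psh(\cC)\to P_\Phi\cC$ preserves finite limits on the \emph{whole} presheaf category; roundedness then follows because $\yo$ preserves limits. You instead avoid any appeal to lexness of $L$: you observe that $I_\Phi\cC$ is a \emph{normal} sketch --- its specified cocones are genuine colimit cocones because $\cC$ is $\Phi$-cocomplete --- so by the orthogonality computation of \Cref{turnaroundsketch} every representable lies in $\mathcal{H}^\perp$, whence $J_{I_\Phi\cC}\cong\yo$ corestricted to $\hat{\cC}$; since $\hat{\cC}$ is an orthogonality class, hence closed under limits in $\Psh(\cC)$, and $\yo$ preserves all existing limits, $J_{I_\Phi\cC}$ sends the specified cones to limit cones. (This first step is essentially \Cref{rem:left-norm-no-like-tilde}, which you are implicitly re-deriving.) Your argument is more elementary and self-contained, and it proves the sharper statement that \emph{every normal sketch is rounded}; in particular, as you note, $\Phi$-exactness plays no role in roundedness itself, only $\Phi$-cocompleteness does. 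It even handles uniformly all (not only finite) limit cones in $\skL_{I_\Phi\cC}$, which the paper's finite-limit-preservation argument does not literally cover. What the paper's route buys in exchange is the stronger conclusion that $L$ is left exact on all of $\Psh(\cC)$, which is what one ultimately needs when identifying $\mathcal{C}l[I_\Phi\cC]$ with the $\Phi$-exact completion in \Cref{rmk:one-diaconescu-all}; roundedness alone, as your proof makes clear, is a strictly cheaper property.
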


\begin{proof}
Let us recall that  in \cite[Section~7,~Page~31]{garner2012lex} they define, for any class of lex-weights $\Phi$ and any small lex-category $\cC$, $P_\Phi\cC$ as the full subcategory of  $\Psh(\cC)$  spanned by functors $F\colon\cC^\circ\to\Set$ sending any $\Phi^\ast$-lex-colimit in $\cC$ to a limit in $\Set$. 
Therefore, using the characterisation of the underlying category of the left sketch classifier given in \Cref{turnaroundsketch}, it follows that if we start with a small $\Phi$-exact category $\cC$, then the underlying category of $\widehat{I_\Phi\cC}$ is exactly $P_\Phi\cC$.
Then, \cite[Proposition~7.3]{garner2012lex} says that $L\colon\Psh(\cC)\to\widehat{I_\Phi\cC}=P_\Phi\cC$ preserves finite limits. Since the Yoneda embedding preserves limits, the functor $J_{I_\Phi\cC}=L\yo\colon\cC\to \widehat{I_\Phi\cC}=P_\Phi$ preserves finite limits. We conclude that $I_\Phi\cC$ is rounded, since cones in $\skL_{I_\Phi\cC}$ are precisely  finite limit diagrams, which are sent to limit diagrams by $L$. 






\end{proof}

It is interesting to notice that \cite[Corollary~7.4]{garner2012lex} corresponds in our setting to the fact that $J_{I_\Phi\cC}$ is a sketch morphism. We will show more connections between our theory and the one of lex-colimits in the last section.

\begin{rem}[First order doctrines are rounded sketches]
Finally, recall from \cite[Sec. 5]{garner2012lex} that all the  \Cref{doctrines}  are indeed $2$-categories of $\Phi$-exact categories, and their associated sketch structure clearly coincides with that described in \Cref{funct-phi-ex-to-skt}, thus we obtain that all the $2$-functors presented in \Cref{doctrines} land in rounded sketches.

\[\begin{tikzcd}[ampersand replacement=\&]
	\coh \& \reg \&\& \lex \& \cprod \\
	\&\& {r\skt_{\text{lex}}}
	\arrow[""{name=0, anchor=center, inner sep=0}, curve={height=6pt}, tail, from=1-1, to=2-3]
	\arrow[""{name=1, anchor=center, inner sep=0}, tail, from=1-2, to=2-3]
	\arrow[""{name=2, anchor=center, inner sep=0}, tail, from=1-4, to=2-3]
	\arrow[""{name=3, anchor=center, inner sep=0}, curve={height=-6pt}, tail, from=1-5, to=2-3]
	\arrow[from=1-1, to=1-2]
	\arrow[from=1-4, to=1-5]
	\arrow[from=1-2, to=1-4]
	\arrow[shorten <=6pt, shorten >=6pt, Rightarrow, from=3, to=2]
	\arrow[shorten <=6pt, shorten >=6pt, Rightarrow, from=1, to=0]
	\arrow[shorten <=15pt, shorten >=15pt, Rightarrow, from=2, to=1]
\end{tikzcd}\]
\black 
\end{rem}

\section{Logoi}\label{sec:class-logoi}

The word \textit{logos} has an eventful history in category theory. 
Let us start by framing the evolution of the use of such word.
As a general statement, each fragment of first order logic has a name: \textit{equational, regular, disjunctive, coherent, geometric}, etc. Among category theorists these are called \textit{doctrines} (of first order logic).

The first use of the word logos was suggested by Freyd and Scedrov in \cite{freyd1990categories}. Nowadays we would call \textit{Heyting} category what they call logos and the fragment of logic they capture is intuitionistic first order logic. This choice did not have much success in the literature, and nowadays the name Heyting category seems more appropriate as it does not put intuitionistic first order logic in such a central (and hard to justify) role.

It was later suggested by Joyal, and strongly popularized by his collaboration with Anel \cite{anel2021topo}, that a logos should be an object in the opposite $2$-category of topoi. This point of view emphasise on the logico-geometric duality of topos theory, and chooses some evocative name to refer to geometric logic.

Our feeling is that as much as the word \textit{topos} was chosen to refer to a quite general notion of \textit{place}, the word \textit{logos} should be used to refer to a quite broad notion of theory, which is not restricted to any fragment of first order logic. In this sense, both Anel-Joyal's choice (which stresses on the relevance of geometric logic) and Freyd-Scedrov's choice seem not general enough. 

For example, $\lambda$-topoi in the sense of Espíndola \cite{Espindola2019infinitary,espindola2023every} offer classifying objects for $\lambda$-geometric logic, and we would like to have a theory where they can be considered logoi too. And even more, we would want a framework that a priori can encompass any variation of infinitary first order logic that comes to mind, beyond those of \textit{geometric taste}. Of course the theory of left sketches offers the perfect environment where all $\lambda$-topoi interact at the same time (\Cref{kappaleft}).

\[\begin{tikzcd}[ampersand replacement=\&]
	{\mathsf{Topoi}^{\text{op}}} \& {\mathsf{Topoi}_{\aleph_1}^{\text{op}}} \& {...} \& {\mathsf{Topoi}_{\lambda}^{\text{op}}} \& {...} \\
	\&\& \leftSkt
	\arrow[from=1-4, to=1-3]
	\arrow[from=1-5, to=1-4]
	\arrow[from=1-2, to=1-1]
	\arrow[from=1-3, to=1-2]
	\arrow[""{name=0, anchor=center, inner sep=0}, curve={height=6pt}, from=1-1, to=2-3]
	\arrow[""{name=1, anchor=center, inner sep=0}, from=1-2, to=2-3]
	\arrow[""{name=2, anchor=center, inner sep=0}, from=1-3, to=2-3]
	\arrow[""{name=3, anchor=center, inner sep=0}, from=1-4, to=2-3]
	\arrow[""{name=4, anchor=center, inner sep=0}, curve={height=-6pt}, from=1-5, to=2-3]
	\arrow[shorten <=9pt, shorten >=9pt, Rightarrow, from=0, to=1]
	\arrow[shorten <=9pt, shorten >=9pt, Rightarrow, from=1, to=2]
	\arrow[shorten <=9pt, shorten >=9pt, Rightarrow, from=2, to=3]
	\arrow[shorten <=9pt, shorten >=9pt, Rightarrow, from=3, to=4]
\end{tikzcd}\]

Yet, we also need to account for the Giraud-like axioms that these left sketches have to verify, and this will be taken into account via the technology developed in the previous section.
So, for us, a \textit{logos} is the \textit{generic semantics} of some first order theory sitting in some fragment of first order logic, in a spectrum that spans from (finitary) equational to $\lambda$-geometric logic (in the sense of Espíndola), without making any commitment on the specific fragment (as long as first order).  The main result of the section is \Cref{diaconescu}, which provides a Diaconescu-type theorem for (left) rounded sketches and logoi that fully generalises Diaconescu theorem for topoi, and any possible geralization of it for $\lambda$-topoi (which for the moment is not in the literature).

\begin{defn}[Logos]\label{def:logos}
A logos $\sS$ is a left rounded left sketch. \\ 
We define the $2$-categories $\logoi$, $\Log^\mathsf{M}$ and $\Log$ as the full sub-2-categories of $\skt$, $\Skt^\mathsf{M}$ and $\Skt$ with objects small, Morita small and locally small logoi. 
\end{defn}

In most of the relevant examples of this paper, a logos will be Morita small. 
Nonetheless, this assumption is often not needed to show the most relevant properties of logoi. 

This section is, almost by definition, pretty much about the intersection of the previous two sections. From a technical point of view, there is not much that remains to be proven, the main aim of the section is to state the version of Diaconescu theorem for logoi and rounded sketches (\Cref{diaconescu}) and make some final considerations on the notion of logos.

\begin{cor} \label{normaloni}
Logoi are normal sketches. 
\end{cor}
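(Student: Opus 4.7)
The plan is to verify the two halves of normality separately, each by invoking a single result already proved for logoi. First, right normality is built into the definition of a left sketch: by item (2) of \Cref{def:left-sketch}, the colimit specifications $\skC_\sS$ of a left sketch are (up to isomorphism) exactly the essentially small colimit cocones in $\cS$, so every cocone in $\skC_\sS$ is already of colimit form. This requires no work beyond unwinding the definition, provided we read ``$\skC_\sS$ contains all the essentially small colimit diagrams'' as describing the class $\skC_\sS$ itself, which is the reading that makes \Cref{rmk:left-class-stable} a genuine equivalence of sketches.

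For left normality, I would combine roundedness with \Cref{rmk:left-class-stable}. Since $\sS$ is left rounded, the morphism $J_\sS\colon\sS\to\hat{\sS}$ of \Cref{costr:left-skt-class} sends every cone $\pi\in\skL_\sS$ to a limit cone in $\hat{\cS}$ (by \Cref{def:rounded-skt}). But $\sS$ is also a left sketch, so \Cref{rmk:left-class-stable} tells us that $J_\sS$ is an equivalence of underlying categories; in particular it reflects limits. Consequently each $\pi\in\skL_\sS$ is already a limit cone in $\cS$, which is precisely left normality.

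The main (mild) obstacle is purely bookkeeping: one must be sure that ``sends cones to limit diagrams'' in \Cref{def:rounded-skt}, interpreted through an equivalence $J_\sS$, actually upgrades to ``the cones themselves are limit diagrams''. This is where the reflection-of-limits property of an equivalence does the work, and it is also why it is important that a left sketch be \emph{equivalent} to its left classifier, not merely Morita equivalent to it.
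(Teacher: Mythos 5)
Your proof is correct and follows essentially the same route as the paper: the cocone half is immediate from the definition of a left sketch (which the paper itself declares ``trivially left normal''), and the cone half combines roundedness with the fact that a left sketch coincides with its left classifier (\Cref{rmk:left-class-stable}), which is exactly the content the paper routes through \Cref{prop:left-class-of-rounded-is-normal}. The only thing to fix is terminology: in the paper's convention (\Cref{skt}), ``all specified cocones are of colimit form'' is called \emph{left} normality and ``all specified cones are of limit form'' is \emph{right} normality, so your two labels are swapped throughout.
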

\begin{proof}
Because they are left rounded we know that their left classifier is normal (\Cref{prop:left-class-of-rounded-is-normal}). Because they are left, they coincide with their left classifier (\Cref{rem:left-norm-no-like-tilde}), hence they are normal. 
\end{proof}

\begin{rem}[A simpler definition of logos?]\label{rmk:logoi-as-loc-pres}
It follows from the corollary above and the definition of logos, that a left Morita small logos is the same as a couple $(\sL, \mathsf{L})$ where $\sL$ is a locally presentable category and $\mathsf{L}$ is a class of diagrams that are \textit{compatible} (in the sense of \Cref{tendaslack} and \Cref{def:rounded-skt}) with all colimits.  A morphism of logoi then is nothing but a cocontinuous functor (or a left adjoint -- by the adjoint functor theorem --) preserving the specified collection of limits.
\end{rem}


\begin{exa}[Topoi \`a la Anel-Joyal and $\lambda$-Topoi à la Espìndola]
Of course the $2$-category of logoi \`a la Anel-Joyal (which is \textit{just} the opposite of the $2$-category of Topoi) embeds fully faithfully into Logoi. This follows directly from  \Cref{topostoleft} and \Cref{topostologos}. Similarly to the case of topoi, $\lambda$-topoi are logoi by \Cref{kappaleft} and \Cref{kapparounded}. This observation finally gives us the diagram that appeared in the introduction of the paper. More is true, all the resulting logoi are left Morita small, almost by definition of topos.

\[\begin{tikzcd}[ampersand replacement=\&]
	{\mathsf{Topoi}^{\text{op}}} \& {\mathsf{Topoi}_{\aleph_1}^{\text{op}}} \& {...} \& {\mathsf{Topoi}_{\lambda}^{\text{op}}} \& {...} \\
	\&\& {\Log^\moritaM}
	\arrow[from=1-4, to=1-3]
	\arrow[from=1-5, to=1-4]
	\arrow[from=1-2, to=1-1]
	\arrow[from=1-3, to=1-2]
	\arrow[""{name=0, anchor=center, inner sep=0}, curve={height=6pt}, from=1-1, to=2-3]
	\arrow[""{name=1, anchor=center, inner sep=0}, from=1-2, to=2-3]
	\arrow[""{name=2, anchor=center, inner sep=0}, from=1-3, to=2-3]
	\arrow[""{name=3, anchor=center, inner sep=0}, from=1-4, to=2-3]
	\arrow[""{name=4, anchor=center, inner sep=0}, curve={height=-6pt}, from=1-5, to=2-3]
	\arrow[shorten <=9pt, shorten >=9pt, Rightarrow, from=0, to=1]
	\arrow[shorten <=7pt, shorten >=7pt, Rightarrow, from=1, to=2]
	\arrow[shorten <=7pt, shorten >=7pt, Rightarrow, from=2, to=3]
	\arrow[shorten <=9pt, shorten >=9pt, Rightarrow, from=3, to=4]
\end{tikzcd}\]

\end{exa}


\subsection{Classifying logoi}

Finally we can present our notion of classifying logos for a rounded sketch. Most of the work was done in the previous sections, and thus this subsection will be dedicated to mostly state the desired results and round off the work.

\begin{notat}[Logoi of fake sheaves over a rounded sketch]
Let $\sS$ be a left rounded small sketch. We may call its left classifier $\hat{\sS}$ the \textit{category of fake sheaves over} $\sS$, or -- more conceptually -- the \textit{classifying logos} of $\sS$. The reason for this choice of name is essentially explained by our pet example of sites and topoi.
\end{notat}

\begin{thm}[Diaconescu for Logoi] \label{diaconescu}

    $\mathsf{Log}^\moritaM$ is (bi)reflective in $r\mathsf{Skt}^\moritaM$.
\[\begin{tikzcd}[ampersand replacement=\&]
	{\roundSkt^\moritaM} \& {\Log^\moritaM}
	\arrow[""{name=0, anchor=center, inner sep=0}, "U"', curve={height=12pt}, from=1-2, to=1-1]
	\arrow[""{name=1, anchor=center, inner sep=0}, "{\mathcal{C}l[-]}"', curve={height=12pt}, from=1-1, to=1-2]
	\arrow["\top"{description}, draw=none, from=1, to=0]
\end{tikzcd}\]
\end{thm}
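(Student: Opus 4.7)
The plan is to set $\mathcal{C}l[\sS] := \hat{\sS}$ for any Morita small rounded sketch $\sS$, and to restrict the biadjunction $\widehat{(-)} \dashv U^L$ of Theorem \ref{diacweak} to the full sub-2-categories $\roundSkt^\moritaM \hookrightarrow \Skt^\moritaM$ and $\Log^\moritaM \hookrightarrow \leftSkt^\moritaM$. Since almost all the heavy lifting has been done in the previous sections, the argument is mostly a matter of verifying that this restriction is well-defined, i.e.\ that $\mathcal{C}l[-]$ lands in $\Log^\moritaM$.

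First, I would verify this well-definedness. By Construction \ref{costr:left-skt-class} (and its extension in Remark \ref{correctionlarge}), $\hat{\sS}$ is always a left sketch, and it is Morita small whenever $\sS$ is (the small Morita-presentation $j\colon \sD \to \sS$ composes with $J_\sS$ to give a Morita-presentation $\sD \to \hat{\sS}$, as already observed in Remark \ref{correctionlarge}). Moreover, Proposition \ref{prop:rounded-then-left-class-rounded} ensures that when $\sS$ is rounded, the left sketch $\hat{\sS}$ is itself rounded, so $\hat{\sS}$ is a left rounded left sketch, i.e.\ a logos in the sense of Definition \ref{def:logos}.

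Next, I would restrict the biadjunction. Because $\roundSkt^\moritaM \hookrightarrow \Skt^\moritaM$ and $\Log^\moritaM \hookrightarrow \leftSkt^\moritaM$ are \emph{full} sub-2-category inclusions, hom-categories agree with those of the ambient 2-categories. Using the biadjunction of Theorem \ref{diacweak}, for any Morita small rounded sketch $\sS$ and any Morita small logos $\sL$, we obtain the chain of natural equivalences
\[
\Log^\moritaM(\,\mathcal{C}l[\sS],\sL\,) \;=\; \leftSkt^\moritaM(\hat{\sS},\sL) \;\simeq\; \Skt^\moritaM(\sS,U^L\sL) \;=\; \roundSkt^\moritaM(\sS,U\sL),
\]
which, varying $\sS$ and $\sL$, delivers the desired biadjunction $\mathcal{C}l[-] \dashv U$. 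For the counit, I would observe that for any logos $\sL$ the equivalence $\mathcal{C}l[U\sL] = \hat{\sL} \simeq \sL$ of Remark \ref{rmk:left-class-stable} applies, so the counit is an equivalence and the biadjunction is indeed bireflective.

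The only point that requires any genuine verification is that the left classifier of a rounded sketch is again rounded, and this is exactly the content of Proposition \ref{prop:rounded-then-left-class-rounded}. Thus there is no substantial obstacle: the main structural theorem was already established in Theorem \ref{diacweak}, and Proposition \ref{prop:rounded-then-left-class-rounded} is precisely the stability statement needed to cut the adjunction down to the rounded/logos setting.
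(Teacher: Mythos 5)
Your proposal is correct and follows essentially the same route as the paper: restrict the biadjunction of \Cref{diacweak} along the full inclusions $\roundSkt^\moritaM\hookrightarrow\Skt^\moritaM$ and $\Log^\moritaM\hookrightarrow\leftSkt^\moritaM$, with \Cref{prop:rounded-then-left-class-rounded} supplying the one genuinely new ingredient, namely that the left classifier of a rounded sketch is again rounded (hence a logos). Your extra observation that the counit is an equivalence via \Cref{rmk:left-class-stable} is a welcome addition that the paper defers to the proof of \Cref{diacweak}.
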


\begin{proof}

    The idea of this theorem is to restrict the (bi)adjunction of \Cref{diacweak} to rounded sketches. 
    First of all, let us define the pseudofunctor $\mathcal{C}l[-]\colon\roundSkt^\moritaM\to\Log^\moritaM$. 
    The only thing that we have to notice is that the left classifier of a rounded sketch is a logos since it is left by construction and it is rounded by \Cref{prop:rounded-then-left-class-rounded}. 
    Hence, since the inclusions $\roundSkt^\moritaM\hookrightarrow\Skt^\moritaM$ and $\Log^\moritaM\hookrightarrow\leftSkt^\moritaM$ are fully faithful, $\hat{(-)}$ restricts to $\mathsf{C}l[-]:=\hat{(-)}\colon\roundSkt^\moritaM\to\Log^\moritaM$. 
\[\begin{tikzcd}[ampersand replacement=\&]
	{\Skt^\moritaM} \&\& {\leftSkt^\moritaM} \\
	\\
	{\roundSkt^\moritaM} \&\& {\Log^\moritaM}
	\arrow[""{name=0, anchor=center, inner sep=0}, "{\mathsf{C}l[-]}"', from=3-1, to=3-3]
	\arrow[hook, from=3-1, to=1-1]
	\arrow[hook, from=3-3, to=1-3]
	\arrow[""{name=1, anchor=center, inner sep=0}, "{\hat{(-)}}"', from=1-1, to=1-3]
	\arrow[""{name=2, anchor=center, inner sep=0}, "{U^L}"', curve={height=18pt}, dashed, from=1-3, to=1-1]
	\arrow[""{name=3, anchor=center, inner sep=0}, "U"', curve={height=18pt}, dashed, from=3-3, to=3-1]
	\arrow["\top"{description}, draw=none, from=2, to=1]
	\arrow["{?}"{description}, draw=none, from=0, to=3]
\end{tikzcd}\]
    Finally, it is enough to notice that the vertical inclusions are fully faithful, and therefore the (bi)adjunction above $\hat{(-)}\dashv U^L$ (from \Cref{diacweak}) restricts to one below $\mathsf{C}l[-]\dashv J$.  We show it explicitly through the chain of equivalences below, with $\sS\in\roundSkt^\moritaM$ and $\sT\in\Log^\moritaM$.  
    \begin{center}
$\roundSkt(\sS,J\sT)\cong\Skt^\moritaM(\sS,U^L\sT)\simeq\leftSkt^\moritaM(\hat{\sS},\sT)\cong\Log^\moritaM(\mathsf{C}l[\sS],\sT)$ 
\footnote{With abuse of notation we will write $U^L\sT$ for the forgetful 2-functor $\Log^\moritaM\to\Skt^\moritaM$, which can be obtained by $J$ followed by the inclusion $\roundSkt^\moritaM\hookrightarrow\Skt^\moritaM$ or by postcomposing the inclusion $\Log^\moritaM\hookrightarrow\leftSkt^\moritaM$ with $U^L$.}
    \end{center}
    
\end{proof}

\begin{rem}[One Diaconescu to rule them all]\label{rmk:one-diaconescu-all}
Let us conclude the paper with some remarks on the Diaconescu's theorem above. Of course, our result recovers the original Diaconescu Theorem, but also encodes \cite[Theorem~7.5]{garner2012lex}, as summarised in the diagram below. Notice that every $\Phi$-exact category admits a site structure making the diagram (not considering the dashed arrows) below  commutative. 
\[\begin{tikzcd}[ampersand replacement=\&]
	{\Phi\text{-}\mathsf{ex}} \&\& {\mathsf{MSite}} \&\& {r\Skt^\moritaM} \\
	\\
	{\infty\text{-}\mathsf{Ex}^\moritaM} \&\& {\mathsf{Topoi}^{\text{op}}} \&\& {\Log^\moritaM}
	\arrow[""{name=0, anchor=center, inner sep=0}, "{P_{\Phi}}", curve={height=-18pt}, from=1-1, to=3-1]
	\arrow[""{name=1, anchor=center, inner sep=0}, "{\mathsf{Sh}}", curve={height=-18pt}, from=1-3, to=3-3]
	\arrow[from=3-1, to=3-3]
	\arrow[from=1-3, to=1-5]
	\arrow[""{name=2, anchor=center, inner sep=0}, "{\mathcal{C}l[-]}", curve={height=-18pt}, from=1-5, to=3-5]
	\arrow[""{name=3, anchor=center, inner sep=0}, "J", curve={height=-18pt}, dashed, from=3-3, to=1-3]
	\arrow[""{name=4, anchor=center, inner sep=0}, "J", curve={height=-18pt}, dashed, from=3-5, to=1-5]
	\arrow[""{name=5, anchor=center, inner sep=0}, "U", curve={height=-18pt}, dashed, from=3-1, to=1-1]
	\arrow[from=3-3, to=3-5]
	\arrow[from=1-1, to=1-3]
	\arrow["\dashv"{anchor=center, rotate=-180}, draw=none, from=0, to=5]
	\arrow["\dashv"{anchor=center, rotate=180}, draw=none, from=1, to=3]
	\arrow["\dashv"{anchor=center, rotate=-180}, draw=none, from=2, to=4]
\end{tikzcd}\]
Let us clarify that the leftmost part of the diagram above is imprecise and not present in the literature.
Indeed \cite[Theorem~7.5]{garner2012lex} only provides a relative adjunction (due to size issues), so that there is no functor $U\colon\infty\text{-}\mathsf{Ex}\to\Phi\text{-}\mathsf{ex}$ going from bottom to top. 
By adapting the results in \cite{garner2012lex} to \emph{Morita small exact categories}, one would obtain such a functor and the discussion would carry.

\end{rem}

\subsection{Anatomy of a \cancel{topos} logos}

As we mentioned in the previous sections, the theory of logoi was designed to offer a more general framework than that of topoi, one that could provide a treatment of more expressive logics. As a side product of this effort, we get a better understanding of what properties of a topos allow for some very important constructions. We shall end the paper with this brief subsection, highlighting three situations in which the $2$-category of topoi behaves significantly better than the $2$-category of (Morita small) logoi.

\begin{rem}[Modular logoi]\label{rmk:modular-log} 
Recall that by \Cref{rmk:logoi-as-loc-pres} a logos $\sL$ can be understood as the specification of a cocomplete category, and a family $\skL$ of limits that are \textit{exact}. Let $\Phi$ be a collection of diagrams. Define \[\Log^{\Phi}\hookrightarrow\Log\] as the full sub $2$-category of those logoi that contain $\Phi$-limits as part of the exact family $\skL$. 
For instance, the the $2$-category of topoi is contained in $\mathsf{Log}^{\Phi}$ for $\Phi$ the family of finite limits.
The bigger $\Phi$ is, the more properties $\mathsf{Log}^{\Phi}$ has. 
In the next subsections we will consider when $\Phi$ consists of reflexive equalisers, pullbacks and finite product, taking inspiration from the case of topoi, where of course these limits are included in the class $\Phi$.
\end{rem}

\subsubsection{Reflexive equalisers: the (localization, conservative) factorization system}

The $2$-category of topoi has a well-known factorization system given by geometric surjections and geometric embeddings \cite[A4.2]{Sketches}. Its dual, in the opposite $2$-category is given by (localization, conservative). Let us briefly recall how (and why) it works. Consider a cocontinuous lex functor $f^*\colon \mathcal{E} \to \mathcal{F}$ between topoi. Then, one can factorize it as follows.

\[\begin{tikzcd}[ampersand replacement=\&]
	{\mathcal{E}} \&\& {\mathcal{F}} \\
	\& {\mathsf{Coalg}(f^*f_*)}
	\arrow["{f^*}", from=1-1, to=1-3]
	\arrow["{\mathsf{U}^*_f}"', dashed, tail, from=2-2, to=1-3]
	\arrow["{Q^*_f}"', dashed, two heads, from=1-1, to=2-2]
\end{tikzcd}\]

It is well known that $\mathsf{Coalg}(f^*f_*)$ is a Grothendieck topos, and it is clear that the forgetful functor $U^*_f$ is cocontinuous, lex and conservative. The functor $Q_f^*$ is induced by the universal property of the category of coalgebras, and is a localization, in the sense that its right adjoint is fully faithful. This factorization is then essentially unique \textit{because} every conservative cocontinuous lex functor is comonadic. This fact follows directly for the fact that it is lex, and thus the condition in Beck (co)monadicity theorem is trivially verified. Now, the two notions of morphism that participate to this factorization are perfectly available in the $2$-category of logoi. 

\begin{defn}[Conservative morphisms and localizations]
Let $F\colon \sL \to \sT$ be a morphism of Morita small logoi.
\begin{itemize}
    \item $F$ is a conservative morphism of logoi if its underlying functor is conservative and faithful. 
    \item $F$ is a localization if its right adjoint\footnote{We recall that a morphism between Morita small logoi is automatically a left adjoint, see \Cref{rmk:logoi-as-loc-pres}.} is fully faithful. 
\end{itemize}
\end{defn}

The same construction, i.e. using $\mathsf{Coalg}$, will lead to \textit{a} factorization of a morphism of logoi into a localization followed by a conservative functor. Yet, in full generality, because morphisms of logoi are not required to preserve reflexive equalizers, this factorization will not be essentially unique and thus this orthogonal factorization system will not be available in the $2$-category of logoi, unless some restriction on the notion of morphism is made.

Using the notation in \Cref{rmk:modular-log}, we expect this factorisation system to be available in $\Log^\Phi$ with $\Phi$ the reflexive equalisers. 

\subsubsection{Pullbacks: back to the lemme de comparison}

Let $\cS \to \sT$ be a dense functor into a logos. In \Cref{lemmelemme}, we have discussed that (in full generality) there is no way to equip $\cS$ with a sketch structure so that $\sT$ is the logos of fake sheaves over that structure. This is because in general, the functor $\text{lan}_{\yo} F$, constructed in the discussion at the end of \Cref{lemmelemme}, may not preserve pullbacks. Thus there is no way to transform the orthogonality class into a sketch structure. Of course, when $\sT$ is a topos and $F$ is flat, the Kan extension in question will preserve pullbacks, and thus one obtains a satisfying version of the lemme de comparison. 

Using the notation in \Cref{rmk:modular-log}, we expect a version of the lemme de comparison to be available in $\Log^\Phi$ with $\Phi$ the pullbacks. 

\subsubsection{Finite products: what is an open morphism of logoi?}

In the introduction we have mentioned that one of the motivations for this paper is to generalize the work of \cite{pitts1983application,pitts1983amalgamation} to other fragments of logic. 
In order to do so though, we will still need some notion of \textit{open} morphism of logoi. For the case of topoi, we know that a geometric morphism is open when its inverse image $f^*$ preserves exponentials. 
Unfortunately, because in a logos $\sL$ the reflector $\mathcal{P}(\cL)\to\widehat{\cL}$ may not preserve finite products, there is no guarantee that $\cL$ will be cartesian closed (see \cite{day1972reflection}). 
Thus we do not have a natural way to generalise the notion of open geometric morphism to the context of logoi, unless we restrict to those logoi such that the reflector preserve finite products. 

Using the notation in \Cref{rmk:modular-log}, we expect a satisfying notion of open morphism to be available in $\Log^\Phi$ with $\Phi$ the finite products. 

\section*{Acknowledgements}
We are grateful to Giacomo Tendas for suggesting to investigate flexible co/limits.

\bibliography{thebib}
\bibliographystyle{alpha}

\appendix
\newpage
\section{Names and Symbols}\label{app:notation}


\begin{table}[!h]\renewcommand{\arraystretch}{1.3}
\begin{tabular}{|c|c|c|}
\hline
\textbf{Symbol }            & \textbf{Meaning} & \textbf{Reference  }                          \\ \hline
$\sS$ & a sketch       &           \Cref{skt}          \\ \hline
$\cS$ & the underlying category of a sketch & \Cref{skt} \\ \hline
$\mathsf{L}, \mathsf{C}$ & a class of (co/)cones      &      \Cref{skt}    \\ \hline
$D\colon\cI\to\skt$ & a diagram in $\skt$ & \Cref{notat:weight-co/proj} \\ \hline
$\sS^\sD$ & exponential sketch & \Cref{costr:exp-skt} \\ \hline
$\sS \boxtimes \sD$ & Benson's tensor product & \Cref{def:benson-tensor} \\ \hline
$(C,J)$ & a site &  \Cref{sites} \\ \hline
$\hat{\sS}$ & Left sketch classifier of $\sS$ & \Cref{costr:left-skt-class} \\ \hline
$\tilde{\sS}$ & Left normalization of $\sS$  & \Cref{costr:left-normalisation} \\ \hline
$U$ & some forgetful functor &  \\ \hline
\end{tabular}
\end{table}

\begin{table}[!h]\renewcommand{\arraystretch}{1.25}
\begin{tabular}{|c|c|c|}
\hline
\textbf{Symbol}             & \textbf{Meaning} & \textbf{Reference}                            \\ \hline
$\skt$ & small sketches       &               \Cref{sktcats}      \\ \hline
$\Skt$ & large and locally small sketches\tablefootnote{Upper cases are always  used to jump from small to large and locally small}   & \Cref{sktcats} \\ \hline
$\skt_l/\skt_c$ & (respectively) limit and colimit sketches       &               \Cref{sktcats}      \\ \hline
$\Skt^{\mathsf{M}}$ & Morita small sketches\tablefootnote{The apex $(-)^{\mathsf{M}}$ is always used to carve out Morita small objects.}   & \Cref{Moritasmall} \\  \hline
$\mathsf{L}\Skt$ & Left sketches   &\Cref{def:left-sketch}  \\   \hline
$\leftnskt$ & Left normal sketches   & \Cref{skt}  \\ \hline
$r\skt$ & rounded sketches   & \Cref{def:rounded-skt} \\ \hline
$\lex$ & small categories with finite limits       &   \Cref{doctrines}(b)                \\ \hline
$\sites$ & small sites      &         \Cref{sites}             \\ \hline
$\sites^{\lex}$ & small sites with underlying lex category\tablefootnote{The apex $(-)^{\lex}$ is always used to carve out lex objects.}      & \Cref{sites}                      \\ \hline
\end{tabular}
\end{table}




\vfill

\end{document}